\newfont{\msbm}{msbm10 at 11pt}
\newcommand {\R} {\mbox{\msbm R}}
\newtheorem{Theo}{Theorem}
\newtheorem{Lemma}[Theo]{Lemma}
\newtheorem{Cor}[Theo]{Corollary}
\newtheorem{Prop}[Theo]{Proposition}
\DeclareMathOperator{\E}{\mathbb{E}}
\renewcommand{\P}{}
\let\P\relax
\DeclareMathOperator{\P}{\mathbb{P}}
\begin{document}
\title{Critical branching Brownian motion with absorption: \\
survival probability}

\author{Julien Berestycki\thanks{Research supported in part by ANR-08-BLAN-0220-01 and ANR-08-BLAN-0190 and ANR-08-BLAN-0190}, Nathana\"el Berestycki\thanks{Research supported in part by EPSRC grants EP/GO55068/1 and EP/I03372X/1}
  \ and Jason Schweinsberg\thanks{Supported in part by NSF Grants DMS-0805472 and DMS-1206195}}
\maketitle

\footnote{{\it MSC 2010}.  Primary 60J65; Secondary 60J80, 60J25}

\footnote{{\it Key words and phrases}.  Branching Brownian motion, extinction time, survival probability, critical phenomena}

\vspace{-.6in}
\begin{abstract}
We consider branching Brownian motion on the real line with absorption at zero, in which particles move according to independent Brownian motions with the critical drift of $-\sqrt{2}$.  Kesten (1978) showed that almost surely this process eventually dies out.  Here we obtain upper and lower bounds on the probability that the process survives until some large time $t$.  These bounds improve upon results of Kesten (1978), and partially confirm nonrigorous predictions of Derrida and Simon (2007).
\end{abstract}

\section{Introduction}

\subsection{Main results}

We consider branching Brownian motion with absorption, which is constructed as follows.  At time zero, there is a single particle at $x > 0$.  Each particle moves independently according to one-dimensional Brownian motion with a drift of $-\mu$, and each particle independently splits into two at rate $1$.  Particles are killed when they reach the origin.  This process was first studied in 1978 by Kesten \cite{kesten}, who showed that almost surely all particles are eventually killed if $\mu \geq \sqrt{2}$, whereas with positive probability there are particles alive at all times if $\mu < \sqrt{2}$.  Thus, $\mu = \sqrt{2}$ is the critical value for the drift parameter.

Harris, Harris, and Kyprianou \cite{hhk06} obtained an asymptotic result for the survival probability of this process when $\mu < \sqrt{2}$.  Harris and Harris \cite{hh07} focused on the subcritical case $\mu > \sqrt{2}$ and estimated the probability that the process survives until time $t$ for large values of $t$.  Results about the survival probability in the nearly critical case when $\mu$ is just slightly larger than $\sqrt{2}$ were obtained in \cite{bbm2, ds07, sd08}.  Questions about the survival probability have likewise been studied for branching random walks in which particles are killed when they get below a barrier.  See \cite{aija11, bego09, fz, ghs, jaffuel} for recent progress in this area.

In this paper, we consider the critical case in which $\mu = \sqrt{2}$.  Let $\zeta$ be the time when the process becomes extinct, which we know is almost surely finite.
Kesten showed (see Theorem 1.3 of \cite{kesten}) that there exists $K > 0$ such that for all $x > 0$, we have $$x e^{\sqrt{2}x - K (\log t)^2 - (3 \pi^2 t)^{1/3}} \leq \P(\zeta > t) \leq (1 + x) e^{\sqrt{2}x + K(\log t)^2 - (3 \pi^2 t)^{1/3}}$$ for sufficiently large $t$.  Our main result, which is Theorem \ref{yaglom} below, improves upon this result.
For this result, and throughout the rest of the paper, we let
\begin{equation}\label{cdef}
\tau = \frac{2 \sqrt{2}}{3 \pi^2}, \hspace{.5in} c = \tau^{-1/3} = \bigg( \frac{3 \pi^2}{2 \sqrt{2}} \bigg)^{1/3}.
\end{equation}

\begin{Theo}\label{yaglom}
There exist positive constants $C_1$ and $C_2$ such that
\begin{equation}\label{E:ext proba 1}
C_1 e^{\sqrt{2} x} \sin\bigg( {\pi x\over ct^{1/3}}\bigg) t^{1/3}e^{-(3 \pi^2 t)^{1/3}} \leq \P(\zeta>t) \leq C_2 e^{\sqrt{2} x} \sin\bigg( {\pi x\over ct^{1/3}}\bigg) t^{1/3}e^{-(3 \pi^2 t)^{1/3}}
\end{equation}
for any $x > 0$ and $t > 0$ such that $x<ct^{1/3}-1.$ 
In particular, there exist positive constants $C_3$ and $C_4$ such that for any fixed $x>0$, we have
\begin{equation}\label{E:ext proba 2}
C_3 x e^{\sqrt{2} x} e^{-(3 \pi^2 t)^{1/3}} \leq \P(\zeta > t) \leq C_4 x e^{\sqrt{2} x} e^{-(3 \pi^2 t)^{1/3}}
\end{equation}
for sufficiently large $t$.
\end{Theo}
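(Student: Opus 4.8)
The plan is to work with $u(t,x) := \P_x(\zeta > t)$, which solves the FKPP-type equation $\partial_t u = \tfrac12 u_{xx} - \sqrt 2\, u_x + u - u^2$ with $u(0,\cdot)\equiv 1$ and $u(t,0)=0$, and to control the first and second moments of the population by the many-to-one and many-to-two lemmas together with a Girsanov change of measure that removes the drift at the price of a factor $e^{\sqrt 2(x-B_t)-t}$. The relevant length scale comes from the following remark: after conjugation by $e^{\sqrt 2 x}$, the linear part $\tfrac12\partial_{xx}-\sqrt 2\partial_x + 1$ on an interval $[0,L]$ with Dirichlet conditions has principal eigenvalue $-\pi^2/(2L^2)$ and principal eigenfunction $e^{\sqrt 2 x}\sin(\pi x/L)$; if one lets the window grow like $L_s = c\,s^{1/3}$ then $\int_0^t \pi^2/(2L_s^2)\,ds = (3\pi^2 t)^{1/3}$, and the constant $c$ in \eqref{cdef} is precisely the one for which $\sqrt 2\, c = (3\pi^2)^{1/3}$ --- equivalently, the cost $e^{-\sqrt 2\, c t^{1/3}}$ of forcing a particle up to height $ct^{1/3}$ against the drift equals the cost $e^{-(3\pi^2 t)^{1/3}}$ of confining the whole family to the slowly expanding parabolic region $\{(s,y): 0\le y \le c\,s^{1/3}\}$. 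This common value produces the exponential factor in \eqref{E:ext proba 1}, and the prefactor $\sin(\pi x/(ct^{1/3}))\,t^{1/3}$ is the amplitude of the corresponding time-dependent ground state.

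For the \emph{upper bound} I would fix the moving barrier $B_s = c\,s^{1/3}$ (modified on a bounded initial interval so that it starts strictly above $x$) and decompose $\{\zeta>t\}$ according to whether some particle ever exceeds $B_s$ and, if so, the first time $s_0$ at which this happens. On the complementary event the survival probability is at most the expected number of particles alive at time $t$ all of whose ancestors stayed in the parabolic region, which by many-to-one equals $e^{\sqrt 2 x}\,\E_x\!\big[e^{-\sqrt 2 B_t};\, 0\le B_s\le c\,s^{1/3}\ \forall s\le t\big]$; a sharp estimate for Brownian motion confined to this region bounds it by (in fact, by a power of $t$ better than) $C\,e^{\sqrt 2 x}\sin(\pi x/(ct^{1/3}))\,t^{1/3}e^{-(3\pi^2 t)^{1/3}}$. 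For the contribution of a first crossing at time $s_0$, at height $\approx c\,s_0^{1/3}$, I would bound it by the product of (i) the expected number of lines that stay below $B_s$ on $[0,s_0]$ and first reach $B_{s_0}$ at time $s_0$, which is $\asymp e^{\sqrt 2 x}\,e^{-\sqrt 2 c s_0^{1/3}}\,e^{-(3\pi^2 s_0)^{1/3}}\sin(\pi x/(c s_0^{1/3}))\times(\text{polynomial})$ (the $e^{-\sqrt 2 c s_0^{1/3}}$ from climbing, the $e^{-(3\pi^2 s_0)^{1/3}}$ from confinement), and (ii) the probability that the descendants of one particle at height $c\,s_0^{1/3}$ survive for the remaining time $t-s_0$, for which I would invoke the upper bound of the theorem at time $t-s_0$ as a strong induction hypothesis when $c\,s_0^{1/3}<c(t-s_0)^{1/3}-1$, and merely estimate (i) by itself (which already decays like $e^{-2^{2/3}(3\pi^2 t)^{1/3}}$) when $s_0\ge t/2$. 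The factor $e^{\sqrt 2 c s_0^{1/3}}=e^{(3\pi^2 s_0)^{1/3}}$ from the inductive survival bound cancels one of the two copies of $e^{-(3\pi^2 s_0)^{1/3}}$ in (i), leaving the exponent $(3\pi^2 s_0)^{1/3}+(3\pi^2(t-s_0))^{1/3}\ge (3\pi^2 t)^{1/3}$ by subadditivity of $r\mapsto r^{1/3}$; integrating over $s_0$ --- the integral is dominated by $s_0$ of order $1$ --- reproduces the claimed bound, provided the constants are chosen so that the induction closes.

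For the \emph{lower bound} I would run a suitably truncated second moment argument for $Z_t$, the number of particles alive at time $t$ whose whole ancestral line has stayed in the parabolic region $\{0\le y\le c\,s^{1/3}\}$ (with a lower-order widening of the region, and with the window pulled in slightly over the last unit of time so that the surviving families do not blow up). The same confined-Brownian-motion estimate yields $\E[Z_t]\asymp e^{\sqrt 2 x}\sin(\pi x/(ct^{1/3}))\,t^{1/3}e^{-(3\pi^2 t)^{1/3}}$, matching the target up to constants; since inside this region the population has strictly negative expected growth rate $-\pi^2/(2L_s^2)$ at every time, a surviving family typically contains only $O(1)$ particles, so a many-to-two computation should give $\E[Z_t^2]\le C\,\E[Z_t]$, and then $\P(\zeta>t)\ge\P(Z_t>0)\ge (\E Z_t)^2/\E[Z_t^2]\gtrsim \E[Z_t]$. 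Finally, \eqref{E:ext proba 2} follows from \eqref{E:ext proba 1} by fixing $x$ and letting $t\to\infty$, using $\sin(\pi x/(ct^{1/3}))\sim \pi x/(ct^{1/3})$.

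The main obstacle, shared by both directions, is the \emph{sharp} asymptotics for Brownian motion confined to the expanding parabolic region $\{y\le c\,s^{1/3}\}$: the exponential rate $(3\pi^2 t)^{1/3}$ is essentially the content of Kesten's bounds, but pinning down the exact amplitude $\sin(\pi x/(ct^{1/3}))\,t^{1/3}$, with no extraneous powers of $t$ or logarithmic corrections, requires a careful adiabatic (eigenfunction-flow) analysis together with a matching of the initial transient from $u(0,\cdot)\equiv1$ to the ground-state profile; equivalently, on the PDE side, one must construct a super- and a sub-solution of the required form, which is delicate near the initial time and near the moving free boundary $x\approx ct^{1/3}$ (where, reassuringly, the estimates become essentially trivial). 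For the lower bound a second difficulty is the bound $\E[Z_t^2]\le C\,\E[Z_t]$, i.e.\ showing that the pair-correlation integral over the time of the most recent common ancestor is governed by its recent-ancestor part; and throughout, the inductive bookkeeping needed to make the constants close must be handled with some care.
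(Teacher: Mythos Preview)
Your adiabatic heuristic for the exponent $(3\pi^2 t)^{1/3}$ is right, but the proposal has two genuine problems.

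\textbf{The barrier should shrink, not grow.} You take $B_s=cs^{1/3}$, which vanishes at $s=0$ and must be patched initially; the ground-state projection then carries $\sin(\pi x/L_0)$ for whatever initial width $L_0$ you choose, not the factor $\sin(\pi x/(ct^{1/3}))$ in the statement. (Worse, since the theorem allows $x$ up to $ct^{1/3}-1$, the ``bounded initial modification'' may have to last the whole interval.) The paper uses the \emph{shrinking} curve $L(s)=c(t-s)^{1/3}$: it starts at exactly $ct^{1/3}$, so the initial eigenfunction factor is the one in the theorem, and it collapses to $0$ at time $t$. With this choice the upper bound is a one-liner: since $L(t)=0$, every particle in the barrier-killed process is gone by time $t$, hence $\{\zeta>t\}\subset\{R>0\}$ with $R$ the number of particles ever killed at $L(\cdot)$, and $\P(\zeta>t)\le\E[R]$. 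No first-crossing decomposition or induction is needed.

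\textbf{The lower bound via second moment on $Z_t$ is short by a factor $t^{1/2}$.} Your claim $\E[Z_t]\asymp e^{\sqrt 2 x}\sin(\pi x/(ct^{1/3}))\,t^{1/3}e^{-(3\pi^2 t)^{1/3}}$ is not correct. With the shrinking barrier stopped at time $t-A$ (or any comparable choice), the adiabatic density estimate gives
\[
\E[N(t-A)]\;\asymp\;e^{\sqrt 2 x}\sin\!\Big(\frac{\pi x}{ct^{1/3}}\Big)\,t^{-1/6}\,e^{-(3\pi^2 t)^{1/3}},
\]
so even a perfect second-moment argument yields a lower bound that is too small by $t^{1/2}$. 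The missing $t^{1/2}$ does not come from particles at a single time; it comes from the fact that a particle can hit the curve $L(\cdot)$ at \emph{any} time $u\in(0,t)$. The paper therefore runs the second-moment argument on $R_{\alpha t,\beta t}$, the number of hits of $L(\cdot)$ in a macroscopic window, for which the time integral $\int (t-u)^{-1/2}\,du$ supplies exactly the extra $t^{1/2}$, giving $\E[R]\asymp h(0,x)$ and $\E[R^2]\le C\,\E[R]$, hence $\P(R>0)\asymp h(0,x)$.

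This still leaves a gap between ``some particle reaches $L(s)$'' and ``the process survives to time $t$'', and closing it requires an ingredient your sketch does not contain: one must show that a single particle placed at height $L(s)=c(t-s)^{1/3}$ survives the remaining time $t-s$ with probability bounded below by a \emph{uniform positive constant}. This is Proposition~\ref{survivefromL} (equivalently, the lower half of Theorem~\ref{extinct}), and it is not a byproduct of the moment estimates; the paper proves it by constructing an auxiliary supercritical branching process of successive returns to the curve. Without this survival-from-the-curve result, the second-moment control of $R$ does not translate into a lower bound on $\P(\zeta>t)$.
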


The main novelty in Theorem \ref{yaglom} is that the terms $e^{\pm K (\log t)^2}$ in Kesten's upper and lower bounds may be replaced by constants $C_1$ and $C_2$ respectively. Nonrigorous work of Derrida and Simon \cite{ds07} indicates that it should be possible to obtain a result even sharper than Theorem \ref{yaglom}.  Indeed, equation (13) of \cite{ds07} indicates that for each fixed $x$, we should have $$\P(\zeta > t) \sim C e^{-(3 \pi^2 t)^{1/3}}$$ as $t \rightarrow \infty$, where $C$ is a constant depending on $x$.

Note that the result (\ref{E:ext proba 1}) is only valid when $0 < x < ct^{1/3} - 1$.  However, when $x=ct^{1/3}-1$, equation \eqref{E:ext proba 1} shows that the survival probability up to $t$ is already of order 1.
It is an open question whether there exists a function $\phi : \R \mapsto [0,1]$ such that
$$
\P_{ct^{1/3}+x} (\zeta >t) \to \phi(x)
$$
as $t\to \infty$,
where $\P_z$ denotes probabilities for branching Brownian motion started from a single particle at $z$.

An important tool in the proof of Theorem \ref{yaglom} will be the following result of independent interest, which gives sharp estimates on the extinction time of the process when the position $x$ of the initial particle tends to infinity.

\begin{Theo}\label{extinct}
Let $\varepsilon > 0$.  Then there exists a positive number $\beta > 0$, depending on $\varepsilon$, such that for sufficiently large $x$, $$\P \big(\tau x^3 - \beta x^2 < \zeta < \tau x^3 + \beta x^2 \big) \geq 1 - \varepsilon.$$
\end{Theo}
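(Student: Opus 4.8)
The plan is to prove the two one-sided estimates $\P_x(\zeta > \tau x^3 - \beta x^2) \geq 1 - \varepsilon/2$ and $\P_x(\zeta > \tau x^3 + \beta x^2) \leq \varepsilon/2$ separately, for $\beta = \beta(\varepsilon)$ taken sufficiently large and all sufficiently large $x$. The reason $x^2$ is the right scale is that, heuristically, $-\log\P_x(\zeta>t)$ is of order $(3\pi^2 t)^{1/3}$, which equals $\sqrt 2 x$ at $t = \tau x^3$ and whose $t$-derivative there is of order $x^{-2}$; thus shifting $t$ by an amount of order $x^2$ across $\tau x^3$ changes $-\log\P_x(\zeta>t)$ by order $1$, i.e.\ moves the survival probability between values close to $1$ and values close to $0$. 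Both bounds rest on first- and second-moment estimates for the number $Z_t^{(0,\ell)}$ of particles alive at time $t$ in the process with absorption at $0$ and along a slowly decreasing space--time curve $\ell(\cdot)$; the relevant curve is $\ell(s) = ((T^\dagger - s)/\tau)^{1/3}$, for which
\[
\int_0^t \frac{\pi^2}{2\ell(s)^2}\,ds = \frac{3\pi^2\tau^{2/3}}{2}\big( (T^\dagger)^{1/3} - (T^\dagger-t)^{1/3}\big),
\]
which reaches $\sqrt 2 x$ precisely when $t \approx T^\dagger \approx \tau x^3$. The basic computation is the many-to-one lemma together with a Girsanov change of measure removing the drift,
\[
\E_x\big[ Z_t^{(0,\ell)} \big] = e^{\sqrt 2 x}\, \E_x\big[ e^{-\sqrt 2 B_t}\,;\, 0 < B_s < \ell(s)\ \forall s \leq t \big],
\]
with $B$ a standard Brownian motion; the expectation on the right is the survival probability in a slowly varying tube reweighted toward small endpoints, estimated via the Dirichlet heat kernel on $(0,\ell(t))$, whose leading eigenvalue is $\pi^2/(2\ell(t)^2)$ with eigenfunction $\sin(\pi x/\ell(t))$.

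For the lower bound on $\zeta$, I would first prove the weaker statement that for each fixed $\beta'>0$ there is $c_0 = c_0(\beta')>0$ with $\P_y(\zeta > \tau y^3 - \beta' y^2) \geq c_0$ for all large $y$. This is a Paley--Zygmund argument for $Z_T^{(0,\ell)}$ with $T = \tau y^3 - \beta' y^2$ and $\ell(0)$ slightly above $y$: the first-moment computation gives $\E_y[Z_T^{(0,\ell)}]\to\infty$ (the gain in the exponent over $\sqrt 2 y$ is of order $y^{2/3}$ and beats the polynomial prefactor, including the $\sin$ factor $\asymp 1/y$), while a common-ancestor decomposition of the second moment gives $\E_y[(Z_T^{(0,\ell)})^2] \leq C(\E_y[Z_T^{(0,\ell)}])^2$ because the tube keeps the process sufficiently subcritical. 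To upgrade $c_0$ to $1-\varepsilon/2$ I would use the branching property: for a constant $t_1 = t_1(\varepsilon)$, with probability at least $1-\varepsilon/2$ the process has produced by time $t_1$ at least $N = N(\varepsilon)$ particles, all within a constant of $x$; taking $\beta$ large enough (depending on $N$, hence on $\varepsilon$) that each of these positions $y_i$ satisfies $\tau y_i^3 - y_i^2 \geq \tau x^3 - \beta x^2 - t_1$, applying the weak bound independently to the $N$ subtrees shows they all die before $\tau x^3 - \beta x^2$ with probability at most $(1-c_0)^N \leq \varepsilon/4$.

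For the upper bound on $\zeta$, fix $\beta$ large and $T = \tau x^3 + \beta x^2$, and work with the absorbing curve $\gamma(s) = ((T-s)/\tau)^{1/3}$, which starts just above $x$ and closes to $0$ at time $T$. One ingredient is cheap: by the supermartingale $\sum_i x_i(t)e^{\sqrt 2 x_i(t)}$ and a maximal inequality, the probability that any particle ever climbs a distance $\beta/(3\tau)$ above its start $x$ — hence ever reaches $\gamma$ near time $0$ — is at most $e^{-\sqrt 2 \beta/(3\tau)} = e^{-\pi^2\beta/2}$, which is $<\varepsilon/4$ for $\beta$ large, and this is where $\beta$ must grow with $1/\varepsilon$. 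The difficulty is that this does not finish the argument: under absorption at $0$ and a constant barrier above $x$ the expected particle number still decays only like $e^{-\pi^2 t/(2L^2)}$ with $L\asymp x$, so a first-moment bound on $Z_t$ is far too weak — survival to the central time $\tau x^3$, though rare, comes with an enormous number of particles, and $\E_x[Z_t^{(0,\infty)}]\asymp e^{\sqrt 2 x}/x^{7/2}$ there. One must instead track directly the rare event that the cloud stays pushed to the right, using the moving curve $\gamma$ together with a bootstrap/induction over dyadic scales of $x$ to bound the contribution of the lineages that do cross $\gamma$ and found near-critical subpopulations, with the curve tuned finely enough to reproduce the exact constant $\tau$.

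The step I expect to be the main obstacle is precisely this control in the upper bound: bounding the lineages that cross the moving barrier and found subpopulations that are themselves in a near-critical regime, and making the induction over scales close without the errors coming from $\dot\gamma(s)$ accumulating, so that the precise value of $\tau$ — not merely its order — emerges. A secondary technical point is the second-moment estimate underpinning the weak lower bound, where one must verify that the chosen tube $\ell(\cdot)$ keeps $Z_T^{(0,\ell)}$ out of the regime in which its variance dominates the square of its mean.
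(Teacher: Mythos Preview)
Your heuristic for the $x^2$ scale and your choice of the moving barrier $\gamma(s)=c(T-s)^{1/3}$ are exactly right, and the two-sided structure matches the paper. The two halves, however, diverge from the paper in different ways.

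For the lower bound your route (Paley--Zygmund on $Z_T^{(0,\ell)}$ to get a uniform constant $c_0$, then amplification via $N$ independent subtrees) is different from the paper's but should work. The paper instead uses Neveu's result (Lemma~\ref{nevlem}) to produce of order $y^{-1}e^{\sqrt 2 y}$ particles at level $x-y$, applies Corollary~\ref{hitcor} to give each a probability $\gtrsim y e^{-\sqrt 2 y} e^{C\beta}$ of having a descendant hit the curve $L_-(\cdot)$ before time $t_-/2$, and then invokes Proposition~\ref{survivefromL} to convert ``hits the curve'' into ``survives to $t_-$''. The factor $e^{C\beta}$ is what drives the success probability to $1$ as $\beta\to\infty$; in your scheme that role is played instead by $N\to\infty$. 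Your approach avoids Proposition~\ref{survivefromL} but shifts the second-moment burden onto $Z_T^{(0,\ell)}$, which you correctly flag as a technical point.

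For the upper bound there is a genuine gap: you have misidentified the difficulty. You are right that a first-moment bound on the particle count $Z_t$ under a \emph{constant} barrier is useless, but the remedy is not a bootstrap over lineages that cross $\gamma$. The key observation is that your curve $\gamma$ closes to $0$ at time $T$, so in the process with killing at both $0$ and $\gamma$ every particle is dead by time $T$; hence $\{\zeta>T\}$ in the original process forces some particle to hit $\gamma$ in the modified one, giving
\[
\P_x(\zeta>T)\ \le\ \P_x(R_{0,T}>0)\ \le\ \E_x[R_{0,T}],
\]
where $R_{0,T}$ is the number of hits of $\gamma$. There is nothing recursive to do: once a particle hits $\gamma$ it is killed in the modified process, and its further descendants are irrelevant to the inclusion above. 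The entire work is the first-moment estimate $\E_x[R_{0,T}]\le C\beta e^{-C'\beta}$, which the paper obtains from Lemma~\ref{L:sumRj} (resting on the density bounds of Proposition~\ref{densityprop} for times $\ge L(0)^2$ and the supermartingale bound of Lemma~\ref{ssLu2} for the short initial window). The correct first-moment object is the hitting count $R$, not the occupation $Z_t$; with that change the ``main obstacle'' you anticipate disappears.
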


Let $x>0$ and let $t = \tau x^3$. Thus Theorem \ref{extinct} says that if there is initially one particle at $x$, the extinction time of the process will be close to $t$ (if $x$ is large). Conversely, fix $t>0$ and define a function 
\begin{equation}
\label{Ldef}L(s) = c (t-s)^{1/3}.
\end{equation}
From Theorem \ref{extinct}, we see that if a particle reaches $L(s)$ at time $s \in (0, t)$, then there is a good chance that a descendant of this particle will survive until time $t$.  Our strategy for proving Theorem \ref{yaglom} will be to estimate the probability that a particle reaches $L(s)$ for some $s \in (0,t)$, and then argue that, up to a constant, this is the same as the probability that the process survives until time $t$.

Theorem \ref{yaglom} gives an estimate of the probability that the process started with one particle at $x > 0$ survives until some large time $t$.  An important open question is to determine, conditional on survival up to a large time $t$, what the configuration of particles will look like before time $t$.  The complete description of the configuration of particles, conditionally upon survival up to a large time $t$, is known as the Yaglom conditional limit. This is in turn related to a main conjecture concerning the limiting behaviour of the Fleming-Viot process proposed by Burdzy et al. \cite{burdzy1, burdzy2}. See \cite{AFGJ} for a recent discussion and verification in a particular case of that conjecture.

This is the first in a series of two papers concerning the properties of critical branching Brownian motion with absorption. In the companion paper \cite{bbs3}, we use ideas developed in this paper to obtain a precise description of the particle configuration at times $0 \le s \le t$, when the position $x$ of the initial particle tends to infinity and $t = \tau x^3$.  It seems likely that the results and methods of \cite{bbs3} will also shed some light on the behavior of the process conditioned to survive for a long time.


\subsection{Organization of the paper} 
In Sections \ref{stripsec} and \ref{stripsec2}, we collect some general results about branching Brownian motion killed at the boundaries of a strip.  Theorem \ref{yaglom} and Theorem \ref{extinct} are proved in Section \ref{yagsec}.  Throughout the paper, $C$ will denote a positive constant whose value may change from line to line, and $\asymp$ will mean that the ratio of the two sides is bounded above and below by positive constants.

\section{Branching Brownian motion in a strip}\label{stripsec}

We collect in this section some results pertaining to branching Brownian motion in a strip.  Consider branching Brownian motion in which each particle drifts to the left at rate $-\sqrt{2}$, and each particle independently splits into two at rate $1$.  Particles are killed if either they reach $0$ or if they reach $L(s)$ at time $s$, where $L(s) \geq 0$ for all $s$.  We assume that the initial configuration of particles is deterministic, with all particles located between $0$ and $L(0)$.

Let $N(s)$ be the number of particles at time $s$, and denote the positions of the particles at time $s$ by $X_1(s) \geq X_2(s) \geq \dots \geq X_{N(s)}(s)$.
Let
$$Z(s) = \sum_{i=1}^{N(s)} e^{\sqrt{2} X_i(s)} \sin \bigg( \frac{\pi X_i(s)}{L(s)} \bigg).$$
Let $({\cal F}_s, s \geq 0)$ denote the natural filtration associated with the branching Brownian motion.

Let $q_s(x,y)$ denote the density of the branching Brownian motion, meaning that if initially there is a single particle at $x$ and $A$ is a Borel subset of $(0,L(s))$, then the expected number of particles in $A$ at time $s$ is $$\int_A q_s(x,y) \: dy.$$

\subsection{A constant right boundary}

We first consider briefly the case in which $L(s) = L$ for all $s$, which was studied in \cite{bbs}.  The following result is Lemma 5 of \cite{bbs}.

\begin{Lemma}\label{stripdensity}
For $s > 0$ and $x,y \in (0, L)$, let $$p_s(x,y) = \frac{2}{L} e^{-\pi^2 s/2L^2} e^{\sqrt{2} x} \sin \bigg( \frac{\pi x}{L} \bigg) e^{-\sqrt{2} y} \sin \bigg( \frac{\pi y}{L} \bigg),$$ and define $D_s(x,y)$ so that $$q_s(x,y) = p_s(x,y)(1 + D_s(x,y)).$$  Then for all $x,y \in (0,L)$, we have $$|D_s(x,y)| \leq \sum_{n=2}^{\infty} n^2 e^{-\pi^2(n^2 - 1)s/2L^2}.$$
\end{Lemma}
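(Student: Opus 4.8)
The plan is to reduce the branching computation to a one-particle problem via the many-to-one lemma and then diagonalize the resulting killed semigroup explicitly. First I would invoke the many-to-one lemma: since each particle independently splits into two at rate $1$, for a single initial particle at $x$ the expected number of particles in a Borel set $A \subseteq (0,L)$ at time $s$ equals $e^s$ times the probability that a single Brownian motion with drift $-\sqrt2$, started at $x$ and killed upon reaching $0$ or $L$, lies in $A$ at time $s$. In density form,
$$q_s(x,y) = e^s \, \tilde q_s(x,y),$$
where $\tilde q_s(x,y)$ is the transition sub-density of that killed drifting Brownian motion.

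Next I would remove the drift using Girsanov's theorem. Writing the drifting motion as $Y_t = x + B_t - \sqrt2\, t$, where $B$ is standard Brownian motion under a reference measure $\P^0$, the change of measure relating the drifting law $\P^{\sqrt2}$ to $\P^0$ on ${\cal F}_s$ has density $\exp(-\sqrt2(Y_s - x) - s)$; since this depends on the path only through $Y_s$, and since the exit time of $(0,L)$ is a path functional common to both laws, one gets
$$\tilde q_s(x,y) = e^{-\sqrt2 (y - x)}\, e^{-s}\, p^0_s(x,y),$$
where $p^0_s(x,y)$ is the Dirichlet heat kernel of $\tfrac12\, d^2/dx^2$ on $(0,L)$. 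Combining the two displays, the factors $e^{\pm s}$ cancel and
$$q_s(x,y) = e^{\sqrt2 x} e^{-\sqrt2 y}\, p^0_s(x,y).$$

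Then I would insert the classical eigenfunction expansion $p^0_s(x,y) = \tfrac2L \sum_{n=1}^\infty e^{-\pi^2 n^2 s/2L^2} \sin(\pi n x/L)\sin(\pi n y/L)$, which converges absolutely for every $s>0$ and $x,y\in(0,L)$. The $n=1$ term, multiplied by $e^{\sqrt2 x - \sqrt2 y}$, is exactly $p_s(x,y)$ as defined in the lemma, so that $D_s(x,y)$ is the ratio of the tail $\sum_{n\ge 2}$ to the $n=1$ term, namely
$$D_s(x,y) = \frac{\sum_{n=2}^\infty e^{-\pi^2 n^2 s/2L^2}\sin(\pi n x/L)\sin(\pi n y/L)}{e^{-\pi^2 s/2L^2}\sin(\pi x/L)\sin(\pi y/L)}.$$
Finally, to obtain a bound uniform in $x,y\in(0,L)$ — the one point requiring genuine care, since the denominator degenerates at the endpoints — I would use the elementary inequality $|\sin(n\theta)|\le n|\sin\theta|$ for every positive integer $n$, which follows by induction from the addition formula. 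This gives $|\sin(\pi n x/L)\sin(\pi n y/L)| \le n^2 |\sin(\pi x/L)\sin(\pi y/L)|$, so the offending sine factors cancel term by term and $|D_s(x,y)| \le \sum_{n=2}^\infty n^2 e^{-\pi^2(n^2-1)s/2L^2}$, as claimed.

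I do not expect a serious obstacle here; the only steps that call for attention are the rigorous justification of the many-to-one identity together with the Girsanov change of measure for a killed process, and checking that the series manipulations are legitimate — absolute convergence of the eigenfunction expansion for $s>0$, which lets one identify and subtract off the $n=1$ term, and the uniform sine estimate that tames the behaviour near the boundary of the strip.
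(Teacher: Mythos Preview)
Your argument is correct. The many-to-one reduction, the Girsanov step removing the drift, the sine eigenfunction expansion of the Dirichlet heat kernel on $(0,L)$, and the elementary bound $|\sin(n\theta)|\le n|\sin\theta|$ combine exactly as you describe to give the stated inequality. (A tiny notational slip: you write $\P^{\sqrt2}$ where you mean the law with drift $-\sqrt2$, but your computation of the Girsanov density is done correctly.)

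As for comparison: the present paper does not actually prove this lemma. It is simply quoted as Lemma~5 of \cite{bbs}, with no argument given here. The proof in \cite{bbs} is essentially the one you wrote down --- the same Girsanov/many-to-one reduction to the Dirichlet heat kernel, followed by the sine series and the inequality $|\sin n\theta|\le n|\sin\theta|$ --- so there is nothing genuinely different to compare.
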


Lemma \ref{stripdensity} allows us to approximate $q_s(x,y)$ by $p_s(x,y)$ when $s$ is sufficiently large.  We will also use the following result, which follows from (28) and (51) of \cite{bbs} and is proved using Green's function estimates for Brownian motion in a strip.

\begin{Lemma} \label{mainqslem}
For all $s \geq 0$ and all $x, y \in (0, L)$, we have
$$\int_0^{\infty} q_s(x,y) \: ds \leq \frac{2 e^{\sqrt{2}(x-y)} x (L - y)}{L}.$$
\end{Lemma}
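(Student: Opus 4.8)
The plan is to reduce the time-integral of the branching density $q_s(x,y)$ to a classical Green's function computation for a single Brownian particle. First I would invoke the many-to-one lemma: since each particle splits in two at rate $1$, the mean population size of the underlying branching mechanism at time $s$ is $e^s$, and the expected number of particles near $y$ at time $s$ equals $e^s$ times the sub-probability density of a single Brownian motion with drift $-\sqrt{2}$ started at $x$ and killed on exiting $(0,L)$. Writing $\tilde q_s(x,y)$ for that killed transition density, this reads $q_s(x,y) = e^s \tilde q_s(x,y)$.

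Next I would strip off the drift by a Girsanov change of measure. Writing $p_s^0(x,y)$ for the transition density of \emph{standard} Brownian motion killed on exiting $(0,L)$, Girsanov's theorem gives $\tilde q_s(x,y) = e^{-\sqrt{2}(y-x) - s} p_s^0(x,y)$, the factor $e^{-\sqrt{2}(y-x)}$ coming from the spatial part of the likelihood ratio and $e^{-s}$ from the quadratic-variation correction $\tfrac12(\sqrt{2})^2 s$. Combining the two identities, the $e^s$ and $e^{-s}$ cancel and one gets the clean formula $q_s(x,y) = e^{\sqrt{2}(x-y)} p_s^0(x,y)$; integrating over $s$,
\[
\int_0^\infty q_s(x,y)\,ds = e^{\sqrt{2}(x-y)} \int_0^\infty p_s^0(x,y)\,ds = e^{\sqrt{2}(x-y)}\, G(x,y),
\]
where $G$ is the Green's function of standard Brownian motion killed at the endpoints of $(0,L)$. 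This is essentially the content of equations (28) and (51) of \cite{bbs} that the statement invokes.

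The final step is the explicit Green's function together with an elementary inequality. One has $G(x,y) = \tfrac{2}{L}(x\wedge y)(L - x\vee y)$, as is checked from the fact that $x\mapsto G(x,y)$ is the piecewise-linear function vanishing at $0$ and $L$ with derivative jump $-2$ at $x=y$ (equivalently, $\tfrac12 G'' = -\delta_y$ with Dirichlet conditions). Then it suffices to note $(x\wedge y)(L-x\vee y)\le x(L-y)$: this is an equality when $x\le y$, and when $x>y$ it reduces to $y(L-x)\le x(L-y)$, i.e. $yL\le xL$, which holds. Substituting gives the claimed bound $\int_0^\infty q_s(x,y)\,ds \le \tfrac{2 e^{\sqrt{2}(x-y)} x(L-y)}{L}$.

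I do not expect a serious obstacle: the argument is a routine many-to-one plus Girsanov plus Green's function computation. The only points needing care are justifying the many-to-one and Girsanov identities at the level of densities rather than just for bounded test functions (a standard mollification argument), and checking that $q_s(x,y)\to 0$ as $s\to\infty$ so that the time-integral converges and produces no boundary term — this is immediate from $q_s(x,y)=e^{\sqrt{2}(x-y)}p_s^0(x,y)$ and the exponential decay of $p_s^0(x,y)$ in $s$. Alternatively one can bypass Girsanov entirely: with $w(x)=\int_0^\infty q_s(x,y)\,ds$, the Kolmogorov equation $\partial_s q_s = \tfrac12\partial_x^2 q_s - \sqrt{2}\,\partial_x q_s + q_s$ integrates to $\tfrac12 w'' - \sqrt{2} w' + w = -\delta_y$ with $w(0)=w(L)=0$, and the substitution $w=e^{\sqrt{2}x}v$ turns this into $\tfrac12 v'' = -e^{-\sqrt{2}y}\delta_y$, whose piecewise-linear solution recovers the same formula for $w$ directly.
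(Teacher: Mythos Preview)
Your proof is correct and follows essentially the same approach the paper alludes to: reduce $q_s(x,y)$ via many-to-one and Girsanov to $e^{\sqrt{2}(x-y)}p_s^0(x,y)$, integrate in $s$ to obtain the Green's function $\tfrac{2}{L}(x\wedge y)(L-x\vee y)$ for Brownian motion killed on $(0,L)$, and then bound by $x(L-y)$. The paper does not spell out the argument but simply cites equations (28) and (51) of \cite{bbs}, which encode exactly this Green's function computation.
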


\subsection{A piecewise linear right boundary}

Fix $m > 0$, and fix $0 < K < L$.  Also, let $t > 0$.  We consider here the case in which
\begin{displaymath}
L(s) = \left\{
\begin{array}{ll}  L & \mbox{ if }0 \leq s \leq t - m^{-1}(L-K)  \\
K + m(t-s) & \mbox{ if } t - m^{-1}(L-K) \leq s \leq t.
\end{array} \right.
\end{displaymath}
We will assume that $m^{-1}(L-K) \leq t/2$.  Thus, the right boundary stays at $L$ from time $0$ until at least time $t/2$, but eventually moves to the left at a linear rate, reaching $K$ at time $t$.

To obtain an estimate of $q_s(x,y)$, we will need the following result for the probability that a Brownian bridge crosses a line.  This result is well-known and follows immediately, for example, from Proposition 3 of \cite{scheike}.  We let  $B^{br}_{x,y,t} = (B^{br}_{x,y,t}(s), 0 \leq s \leq t)$ denote the Brownian bridge from $x$ to $y$ of length $t$.
\begin{Lemma}
If $x < a$ and $y < a + bt$, then
\begin{equation}\label{bbr1}
\P(B^{br}_{x,y,t}(s) \geq a + bs \mbox{ for some }s \in [0,t]) = \exp\bigg(-\frac{2(a-x)(a + bt - y)}{t}\bigg).
\end{equation}
If $x > a$ and $y > a + bt$, then
\begin{equation}\label{bbr2}
\P(B^{br}_{x,y,t}(s) \leq a + bs \mbox{ for some }s \in [0,t]) = \exp\bigg(-\frac{2(x-a)(y - a - bt)}{t}\bigg).
\end{equation}
\end{Lemma}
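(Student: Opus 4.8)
\medskip

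\noindent The plan is to reduce the slanted-barrier crossing problem to a horizontal-barrier one and then invoke the reflection principle. First I would remove the slope. Writing the bridge in terms of a standard Brownian motion $W$ started at $0$ as
\[
B^{br}_{x,y,t}(s) = x + \frac{s}{t}(y-x) + \Big(W_s - \frac{s}{t}W_t\Big), \qquad 0 \le s \le t,
\]
one sees at once that the process $s \mapsto B^{br}_{x,y,t}(s) - bs$ has the same law as $B^{br}_{x,\,y-bt,\,t}$, since subtracting $bs$ only turns the linear term $\frac{s}{t}(y-x)$ into $\frac{s}{t}\big((y-bt)-x\big)$ while leaving the covariance unchanged. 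Hence
\[
\P\big(B^{br}_{x,y,t}(s) \ge a + bs \text{ for some } s \in [0,t]\big) = \P\big(B^{br}_{x,\,y-bt,\,t}(s) \ge a \text{ for some } s \in [0,t]\big),
\]
and since $x < a$ and $y - bt < a$, it suffices to prove the horizontal statement: for all $x,z < a$,
\[
\P\Big(\sup_{0 \le s \le t} B^{br}_{x,z,t}(s) \ge a\Big) = \exp\!\Big(-\frac{2(a-x)(a-z)}{t}\Big) \qquad (\star).
\]
Specialising $(\star)$ to $z = y - bt$, so that $a - z = a + bt - y$, then gives \eqref{bbr1}.

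To prove $(\star)$ I would argue as follows. Let $W$ now denote Brownian motion started at $x$, let $p_t(\cdot,\cdot)$ be its Gaussian transition density, and let $T_a = \inf\{s \ge 0 : W_s = a\}$. Since $B^{br}_{x,z,t}$ is $W$ conditioned on $\{W_t = z\}$, the left-hand side of $(\star)$ equals $\P(T_a \le t \mid W_t = z) = p_t(x,z)^{-1} g(z)$, where $g(z)\,dz = \P(T_a \le t,\ W_t \in dz)$. For $z < a$, the reflection principle — the strong Markov property at $T_a$ together with the symmetry about the level $a$ of the increments of $W$ after $T_a$ — gives $\P(T_a \le t,\ W_t \in dz) = \P(W_t \in d(2a-z)) = p_t(x, 2a-z)\,dz$, where one uses that $2a-z > a > x$ so a path ending above $a$ has necessarily already hit $a$. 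Thus the left side of $(\star)$ is
\[
\frac{p_t(x, 2a-z)}{p_t(x,z)} = \exp\!\Big(-\frac{(2a-z-x)^2 - (z-x)^2}{2t}\Big),
\]
and setting $p = a - x$, $q = a - z$ one has $2a-z-x = p+q$ and $z-x = p-q$, so $(2a-z-x)^2 - (z-x)^2 = 4pq = 4(a-x)(a-z)$, which yields $(\star)$.

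Finally I would deduce \eqref{bbr2} from \eqref{bbr1} by reflection. The process $-B^{br}_{x,y,t}$ has the law of $B^{br}_{-x,-y,t}$ (apply the representation above with $-W$ in place of $W$), and the event $\{B^{br}_{x,y,t}(s) \le a + bs \text{ for some } s\}$ is the event $\{B^{br}_{-x,-y,t}(s) \ge (-a) + (-b)s \text{ for some } s\}$. The hypotheses $x > a$ and $y > a + bt$ become $-x < -a$ and $-y < (-a) + (-b)t$, so \eqref{bbr1} applies with $(x,y,a,b)$ replaced by $(-x,-y,-a,-b)$ and gives $\exp\big(-2(x-a)(y-a-bt)/t\big)$, as claimed.

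Everything here is routine once the reduction has been set up; the only step that demands any genuine care is the reflection-principle identity for the joint law of $(T_a, W_t)$, which relies on the strong Markov property at $T_a$ and the symmetry of Brownian increments. Since this is entirely classical (it is exactly the content of the cited Proposition~3 of \cite{scheike}), I do not expect it to be a real obstacle.
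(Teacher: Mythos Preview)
Your argument is correct. The paper's own proof is shorter because it simply quotes Proposition~3 of \cite{scheike} (the case of a bridge started at $0$ crossing a slanted line), then obtains the general starting point by the affine shift $B^{br}_{x,y,t}(s)=B^{br}_{0,y,t}(s)+x(t-s)/t$, and finally gets \eqref{bbr2} from \eqref{bbr1} by the same reflection $B^{br}\mapsto -B^{br}$ that you use. You take a slightly different and more self-contained route: you first remove the slope (subtracting $bs$ turns the slanted barrier into a horizontal one and the endpoint $y$ into $y-bt$), and then you prove the horizontal-barrier formula $(\star)$ from scratch via the reflection principle for the joint law of $(T_a,W_t)$. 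The reflection step for \eqref{bbr2} is identical in both proofs. What your approach buys is that the lemma becomes independent of the external reference; what the paper's approach buys is brevity, since the whole computation of $(\star)$ is absorbed into the citation.
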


\begin{proof}
Proposition 3 of \cite{scheike} states that if $a > 0$ and $y < a + bt$, then
$$\P(B^{br}_{0,y,t}(s) \geq a + bs \mbox{ for some }s \in [0,t]) = \exp\bigg(-\frac{2a(a + bt - y)}{t}\bigg).$$  The result (\ref{bbr1}) follows because $(B^{br}_{0,y,t}(s) + x(t-s)/t, 0 \leq s \leq t)$ is a Brownian bridge of length $t$ from $x$ to $y$.  Then (\ref{bbr2}) follows because $(-B^{br}_{x,y,t}(s), 0 \leq s \leq t)$ is a Brownian bridge of length $t$ from $-x$ to $-y$.
\end{proof}

\begin{Lemma}\label{pllem}
There exists a positive constant $C$ such that if $t > 0$ and $K + mt/2 \leq 2L$, then for all $x \in [0,L]$ and all $y \in [0,K]$, we have $$q_t(x,y) \leq \frac{CL^4}{t^{5/2}} e^{\sqrt{2} x} \sin \bigg( \frac{\pi x}{L} \bigg) e^{-\sqrt{2} y} \sin \bigg( \frac{\pi y}{K} \bigg).$$
\end{Lemma}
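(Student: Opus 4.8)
The plan is to remove the branching and the drift, reduce to a single killed Brownian motion, and then split the time interval at the kink of the boundary, treating the flat phase by an eigenfunction expansion and the sloped phase by the Brownian-bridge crossing identity above. To begin, combining the many-to-one lemma with Girsanov's theorem gives $q_t(x,y)=e^{\sqrt2(x-y)}\,\tilde\rho_t(x,y)$, where $\tilde\rho_t(x,y)$ is the transition density of a driftless Brownian motion killed on leaving $(0,L(s))$; here the factor $e^{t}$ coming from the expected population size cancels exactly against the $e^{-t}$ in the Cameron--Martin density of a drift $-\sqrt2$. Since the bound we want already carries $e^{\sqrt2 x}e^{-\sqrt2 y}$, it remains to show $\tilde\rho_t(x,y)\le CL^4 t^{-5/2}\sin(\pi x/L)\sin(\pi y/K)$ for $x\in[0,L]$ and $y\in[0,K]$.

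Put $t_1=t-m^{-1}(L-K)$ and $r=t-t_1=m^{-1}(L-K)$. By the standing assumption $r\le t/2$, so the boundary is flat on $[0,t_1]$ and $t/2\le t_1<t$. By Chapman--Kolmogorov, $\tilde\rho_t(x,y)=\int_0^L\tilde\rho^{(L)}_{t_1}(x,z)\,\tilde\rho^{\mathrm{sl}}_{[t_1,t]}(z,y)\,dz$, where $\tilde\rho^{(L)}_{t_1}$ is the killed density for the fixed strip $(0,L)$ and $\tilde\rho^{\mathrm{sl}}_{[t_1,t]}$ the killed density over the sloped phase. For the first factor I would use the eigenfunction expansion of the Dirichlet heat kernel on $(0,L)$ (the one behind Lemma~\ref{stripdensity}) together with the elementary inequality $|\sin(n\theta)|\le n|\sin\theta|$ in both variables, which gives $\tilde\rho^{(L)}_{t_1}(x,z)\le\frac{C}{L}\sin(\pi x/L)\sin(\pi z/L)\sum_{n\ge1}n^2 e^{-n^2\pi^2 t_1/2L^2}$, and I would bound the series by $Ce^{-\pi^2 t_1/2L^2}\max\{1,(L^2/t_1)^{3/2}\}$. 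Retaining the exponential factor is essential: it is what beats the target $t^{-5/2}$ when $t$ is large compared with $L^2$, while for $t\lesssim L^2$ the $(L^2/t_1)^{3/2}$ term provides the needed powers of $L$.

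For the sloped phase I would use the representation $\tilde\rho^{\mathrm{sl}}_{[t_1,t]}(z,y)=g_r(z-y)\,\P\big(B^{br}_{z,y,r}(u)\in(0,L-mu)\ \forall\,u\in[0,r]\big)$, $g_r$ being the centred Gaussian density of variance $r$, and I would produce the factor $\sin(\pi y/K)$ (comparable to $\min(y,K-y)/K$) by getting its two ``halves'' with a common prefactor. The constraint of staying positive gives the factor $y$, via $1-e^{-2zy/r}\le 2zy/r$; the constraint of staying below the line $L-mu$ gives the factor $K-y$, via the crossing identity~\eqref{bbr1} with $a=L$ and $b=-m$, because there $a+br-y=L-mr-y=K-y$, so this probability equals $1-\exp(-2(L-z)(K-y)/r)\le 2(L-z)(K-y)/r$. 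Inserting these into the convolution, bounding $\sin(\pi z/L)$ by $\pi z/L$ in the first case and by $\pi(L-z)/L$ in the second so that it pairs with the $z$, respectively $L-z$, produced by the bridge, and carrying out the Gaussian integrals in $z$, I would then collect powers of $L$ and $t$. It is here that the extra hypothesis $K+mt/2\le 2L$ is used: with $mr=L-K$ and $r\le t/2$ it forces $m\le(4L-2K)/t$, equivalently $r\ge t(L-K)/(2(2L-K))$, which is exactly the control on the speed of the moving boundary --- hence on $1/r$ and on $m^2r$ --- needed to keep every term at or below order $L^4 t^{-5/2}$.

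The step I expect to be the main obstacle is the sloped phase. The correct $\sin(\pi y/K)$ --- in particular its vanishing as $y\uparrow K$ --- cannot be obtained by enclosing the sloped strip in a wider fixed strip, which would only yield $\sin(\pi y/L)$; it genuinely requires the exact bridge-crossing probability along the line that reaches $K$ precisely at time $t$. Moreover the estimate $1-e^{-2(L-z)(K-y)/r}\le 2(L-z)(K-y)/r$ is efficient only when $r$ is not too large, so in the regime $r\gtrsim L^2$ one probably has to subdivide the sloped phase once more (say at time $t-L^2$), treating the earlier part by enclosure in a fixed strip and the last window of length $L^2$ by the bridge estimate. After that, what remains is careful but routine bookkeeping of constants and exponents across the two regimes $t\lessgtr L^2$.
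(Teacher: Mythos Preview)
Your route differs from the paper's in two substantial ways. You split the time interval at the kink $t_1=t-m^{-1}(L-K)$ and treat the flat part by the Dirichlet eigenfunction expansion; the paper instead splits at the midpoint $t/2$ and uses \emph{only} the Brownian-bridge crossing identities~\eqref{bbr1}--\eqref{bbr2} for both halves. Concretely, the paper conditions the bridge $B^{br}_{x,y,t}$ on its value $z$ at time $t/2$, then for the first half bounds the survival probability by $1-e^{-4xz/t}$ or $1-e^{-4(L-x)(L-z)/t}$ (whichever boundary is nearer to $x$), and for the second half by $1-e^{-4zy/t}$ or by the line-crossing probability with $a=K+mt/2$, $b=-m$ (whichever boundary is nearer to $y$). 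Each half, having fixed length $t/2$, yields a clean factor $\tfrac{CL^2}{t}\sin(\pi x/L)$ or $\tfrac{CL^2}{t}\sin(\pi y/K)$ uniformly in $z$; the hypothesis $K+mt/2\le 2L$ is used precisely to bound the coefficient $a=K+mt/2$ in the second-half line bound. Multiplying these two factors with the remaining $t^{-1/2}$ from the free Gaussian density gives $L^4 t^{-5/2}$ at once, with no eigenfunction expansion and no case analysis on the size of $t/L^2$ or of $r$.

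Your plan, by contrast, has a genuine loose end in the regime where the sloped phase is \emph{short}. Under the hypotheses, $r=m^{-1}(L-K)$ ranges over $\big[\tfrac{(L-K)t}{2(2L-K)},\,\tfrac{t}{2}\big]$, so $r$ can be arbitrarily small when $K$ is close to $L$. In that case the bridge bound $1-e^{-2(L-z)(K-y)/r}\le 2(L-z)(K-y)/r$ is wasteful (the left side is near $1$ while the right side is huge), so the sloped phase does not deliver the factor $K-y$; and you cannot recover it from the flat phase either, since the eigenfunction bound gives $\sin(\pi z/L)$, which after the narrow Gaussian convolution becomes $\sin(\pi y/L)$, and $\sin(\pi y/L)/\sin(\pi y/K)$ is \emph{not} bounded as $y\uparrow K$ however close $K$ is to $L$. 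Your diagnosis locates the difficulty at $r\gtrsim L^2$ and proposes to cure it by enclosing part of the sloped strip in a fixed strip --- but that is the benign regime (there $2(L-z)(K-y)/r\lesssim 1$ and your bound is tight), and you yourself correctly note a paragraph earlier that enclosure in a fixed strip cannot produce $\sin(\pi y/K)$. The paper's midpoint split sidesteps all of this: the length of the second half is always $t/2$, independent of $r$, and the single line $s\mapsto K+m(t-s)$ dominates $L(\cdot)$ on the whole of $[t/2,t]$, so one bridge inequality of window $t/2$ yields the $(K-y)$ factor directly.
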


\begin{proof}
First, we claim that $$q_t(x,y) = \frac{1}{\sqrt{2 \pi t}} e^{-(x - y)^2/2t} \cdot e^{\sqrt{2}(x-y) - t} \cdot e^t \cdot \P(0 \leq B^{br}_{x,y,t}(s) \leq L(s) \mbox{ for all }s \in [0,t]).$$  To see this, observe that the first factor is the density for standard Brownian motion, the second factor is a Girsanov term that relates Brownian motion with drift to standard Brownian motion, the third factor of $e^t$ accounts for the branching at rate 1, and the fourth factor is the probability that a Brownian particle that starts at $x$ and ends at $y$ avoids being killed at one of the boundaries.  Therefore,
\begin{equation}\label{qtbrint}
q_t(x,y) \leq \frac{C e^{\sqrt{2}(x-y)}}{\sqrt{t}} \P(0 \leq B^{br}_{x,y,t}(s) \leq L(s) \mbox{ for all }s \in [0,t]).
\end{equation}
Let $g$ denote the density of $B^{br}_{x,y,t}(t/2)$.  Then
\begin{align}\label{brintg}
&\P(0 \leq B^{br}_{x,y,t}(s) \leq L(s) \mbox{ for all }s \in [0,t]) \nonumber \\
&\qquad= \int_0^{L(t/2)} \P(0 \leq B^{br}_{x,z,t/2}(s) \leq L(s) \mbox{ for all }s \in [0, t/2]) \nonumber \\
&\qquad \qquad \times \P(0 \leq B^{br}_{z,y,t/2} \leq L(t/2 + s) \mbox{ for all }s \in [0, t/2]) g(z) \: dz.
\end{align}
Recall that $L(s) = L$ for all $s \in [0, t/2]$.  Therefore, if $0 \leq x \leq L/2$ and $0 \leq z \leq L$, then by (\ref{bbr2}) with $a = b = 0$,
\begin{align}\label{br1}
\P(0 \leq B^{br}_{x,z,t/2}(s) \leq L(s) \mbox{ for all }s \in [0, t/2]) &\leq \P(B^{br}_{x,z,t/2}(s) \geq 0 \mbox{ for all }s \in [0, t/2]) \nonumber \\
&= 1 - \P(B^{br}_{x,z,t/2}(s) \leq 0 \mbox{ for some }s \in [0, t/2]) \nonumber \\
&= 1 - \exp \bigg(- \frac{4xz}{t} \bigg) \nonumber \\
&\leq \frac{4xL}{t}.
\end{align}
If $L/2 \leq x \leq L$ and $0 \leq z \leq L$, then by (\ref{bbr1}) with $a = L$ and $b = 0$,
\begin{align}\label{br2}
\P(0 \leq B^{br}_{x,z,t/2}(s) \leq L(s) \mbox{ for all }s \in [0, t/2]) &\leq \P(B^{br}_{x,z,t/2}(s) \leq L \mbox{ for all }s \in [0, t/2]) \nonumber \\
&= 1 - \P(B^{br}_{x,z,t/2}(s) \geq L \mbox{ for some }s \in [0, t/2]) \nonumber \\
&= 1 - \exp \bigg(- \frac{4(L-x)(L-z)}{t} \bigg) \nonumber \\
&\leq \frac{4(L-x)L}{t}.
\end{align}
Combining (\ref{br1}) and (\ref{br2}), we get
\begin{equation}\label{br5}
\P(0 \leq B^{br}_{x,z,t/2}(s) \leq L(s) \mbox{ for all }s \in [0, t/2]) \leq \frac{4L}{t} \min\{x, L-x\} \leq \frac{CL^2}{t} \sin \bigg( \frac{\pi x}{L} \bigg).
\end{equation}
If $0 \leq y \leq K/2$ and $0 \leq z \leq L$, then using the same reasoning as in (\ref{br1}),
\begin{align}\label{br3}
\P(0 \leq B^{br}_{z,y,t/2}(s) \leq L(t/2 + s) \mbox{ for all }s \in [0, t/2]) &\leq \P(B^{br}_{z,y,t/2}(s) \geq 0 \mbox{ for all } s \in [0, t/2]) \nonumber \\
&\leq \frac{4yL}{t}.
\end{align}
If $K/2 \leq y \leq K$, then by (\ref{bbr1}) with $a = K + mt/2$ and $b = -m$,
\begin{align} \label{br4}
&\P(0 \leq B^{br}_{z,y,t/2}(s) \leq L(t/2 + s) \mbox{ for all }s \in [0, t/2]) \nonumber \\
&\qquad \leq \P(B^{br}_{z,y,t/2}(s) \leq K + m(t/2 - s) \mbox{ for all }s \in [0, t/2]) \nonumber \\
&\qquad = 1 - \P(B^{br}_{z,y,t/2}(s) \geq K + m(t/2 - s) \mbox{ for some }s \in [0, t/2]) \nonumber \\
&\qquad = 1 - \exp \bigg( \frac{4(K + mt/2 - z)(K - y)}{t} \bigg). \nonumber \\
&\qquad \leq \frac{4(K + mt/2)(K-y)}{t}.
\end{align}
From (\ref{br3}) and (\ref{br4}) and the assumption that $K + mt/2 \leq 2L$, we get
\begin{equation}\label{br6}
\P(0 \leq B^{br}_{z, y, t/2}(s) \leq L(t/2 + s) \mbox{ for all }s \in [0, t/2]) \leq \frac{8L}{t} \min\{y, K-y\} \leq \frac{CL^2}{t} \sin \bigg( \frac{\pi y}{K} \bigg).
\end{equation}
By (\ref{brintg}), (\ref{br5}), and (\ref{br6}),
\begin{align}
\P(0 \leq B^{br}_{x,y,t}(s) \leq L(s) \mbox{ for all }s \in [0,t]) &\leq \frac{CL^4}{t^2} \sin \bigg( \frac{\pi x}{L} \bigg) \sin \bigg( \frac{\pi y}{K} \bigg) \int_0^{L(t/2)} g(z) \: dz \nonumber \\
&\leq \frac{CL^4}{t^2} \sin \bigg( \frac{\pi x}{L} \bigg) \sin \bigg( \frac{\pi y}{K} \bigg). \nonumber
\end{align}
The lemma follows by combining this result with (\ref{qtbrint}).
\end{proof}

\subsection{A curved right boundary}

We now consider the more general case in which the right boundary may change over time, which was studied in detail in \cite{haro}.  In \cite{haro}, Harris and Roberts considered branching Brownian motion restricted to stay between $f(s) - L(s)$ and $f(s) + L(s)$, which is equivalent to our setting when both $f(s)$ and $L(s)$ are set equal to what we have denoted by $L(s)/2$.  Assume that $s \mapsto L(s)$ is twice continuously differentiable.

Fix a point $x$ such that $0 < x < L(0)$.  Following the analysis in \cite{haro}, let $(\xi_t)_{t \geq 0}$ be a standard Brownian motion started at $x$, and define
\begin{align*}
G(s) &= \exp \left( \frac{1}{2} \int_0^s L'(u) \: d\xi_u  - \frac18 \int_0^s L'(u)^2 \: du + \int_0^s \frac{\pi^2}{2L(u)^2} \: du \right) \\ &\qquad \times \exp \left( \frac{L'(s)}{2L(s)} (\xi_s -L(s)/2)^2 -\int_0^s \left( \frac{L''(u)}{2 L(u)} (\xi_u - L(u)/2)^2 + \frac{L'(u)}{2L(u)} \right) du  \right).
\end{align*}
Also, define
\begin{equation}\label{Vdef}
V(s) = G(s) \sin \left(\frac{\pi \xi_s}{L(s)}\right) {\bf 1}_{\{ 0 < \xi_u < L(u) \, \forall u \le s\}}.
\end{equation}
It is shown in \cite{haro} using It\^o's Formula (see Lemma 4.2 of \cite{haro} and the discussion immediately following that result) that the process $(V(s), s \geq 0)$ is a martingale.

We now write $G(s)$ as a product of three terms $G(s) = A(s) B(s) C(s)$ as follows:
\begin{align*}
A(s) &=   \exp \left( \frac12 \int_0^s L'(u) \: d\xi_u  - \frac18 \int_0^s L'(u)^2 \: du \right)\\
B(s) &=  \exp \left(  \int_0^s \frac{\pi^2}{2L(u)^2} \: du -  \int_0^s \frac{L'(u)}{2L(u)} \: du  \right) \\
C(s) &=  \exp \left(   \frac{L'(s)}{2L(s)} (\xi_s -L(s)/2)^2 -\int_0^s \frac{L''(u)}{2L(u)} (\xi_u - L(u)/2)^2 \: du  \right).
\end{align*}
This leads to the following result about the expectation of $Z(s)$.

\begin{Lemma}\label{EZs}
Suppose initially there is a single particle at $x$.  Then $$\E[Z(s)] = e^{\sqrt{2} x}B(s)^{-1} \E[V(s) A(s)^{-1} C(s)^{-1}].$$
\end{Lemma}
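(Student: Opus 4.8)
The plan is to reduce the statement to a single‑particle computation: first apply the many‑to‑one (first moment) formula for branching Brownian motion, then remove the drift by a Girsanov transformation, and finally recognize the resulting expectation in terms of $V$, $A$, $B$, $C$.

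\textbf{Step 1 (many‑to‑one).} Since each particle performs Brownian motion with drift $-\sqrt2$, branches at rate $1$, and is killed on hitting $0$ or $L(\cdot)$, the branching property yields, for every nonnegative measurable $f$,
\[
\E\Big[\sum_{i=1}^{N(s)} f(X_i(s))\Big] \;=\; e^{s}\,\E_x\!\Big[f(W_s)\,{\bf 1}_{\{0<W_u<L(u)\ \forall u\le s\}}\Big],
\]
where $(W_u)_{0\le u\le s}$ is a single Brownian motion with drift $-\sqrt2$ started at $x$: the factor $e^{s}$ is the expected number of particles if there were no killing, and the indicator accounts for absorption at the two boundaries. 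Taking $f(z)=e^{\sqrt2 z}\sin(\pi z/L(s))$ gives
\[
\E[Z(s)] \;=\; e^{s}\,\E_x\!\Big[e^{\sqrt2 W_s}\sin\!\big(\tfrac{\pi W_s}{L(s)}\big)\,{\bf 1}_{\{0<W_u<L(u)\ \forall u\le s\}}\Big].
\]

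\textbf{Step 2 (Girsanov).} Viewing $W$ as the standard Brownian motion $\xi$ of the statement (also started at $x$) tilted by the Cameron--Martin drift $u\mapsto -\sqrt2\,u$, the law of $(W_u)_{u\le s}$ is absolutely continuous with respect to that of $(\xi_u)_{u\le s}$ with density $\exp(-\sqrt2(\xi_s-x)-s)$. Substituting this in, the $e^{s}$ cancels the $e^{-s}$ from the density and $e^{\sqrt2 \xi_s}$ cancels the $e^{-\sqrt2\xi_s}$ from the density, leaving the constant $e^{\sqrt2 x}$:
\[
\E[Z(s)] \;=\; e^{\sqrt2 x}\,\E_x\!\Big[\sin\!\big(\tfrac{\pi \xi_s}{L(s)}\big)\,{\bf 1}_{\{0<\xi_u<L(u)\ \forall u\le s\}}\Big].
\]

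\textbf{Step 3 (identifying $V$).} By (\ref{Vdef}) and the factorization $G(s)=A(s)B(s)C(s)$, with all three factors strictly positive, we have
\[
\sin\!\big(\tfrac{\pi \xi_s}{L(s)}\big)\,{\bf 1}_{\{0<\xi_u<L(u)\ \forall u\le s\}} \;=\; \frac{V(s)}{G(s)} \;=\; V(s)\,A(s)^{-1}B(s)^{-1}C(s)^{-1},
\]
both sides vanishing off the event $\{0<\xi_u<L(u)\ \forall u\le s\}$. Since $B(s)$ is a function of the deterministic boundary $L$ only, it is non‑random, so it may be pulled out of the expectation, leaving $\E[Z(s)] = e^{\sqrt2 x}B(s)^{-1}\E[V(s)A(s)^{-1}C(s)^{-1}]$, which is the asserted identity.

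The only delicate points are routine: justifying the many‑to‑one formula with the time‑dependent absorbing boundaries (a Feynman--Kac / inductive argument over the branching tree), and checking enough integrability to apply Girsanov and to split the product $V(s)A(s)^{-1}B(s)^{-1}C(s)^{-1}$. The latter is the main, mild obstacle, because $A(s)^{-1}$ and $C(s)$ involve a stochastic integral and an unbounded quadratic functional of $\xi$; however, on $\{0<\xi_u<L(u)\ \forall u\le s\}$ the path is confined to the strip so these functionals are controlled, and $\E[Z(s)]<\infty$ by Step 1, making all the manipulations legitimate.
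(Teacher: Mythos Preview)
Your proof is correct and follows essentially the same route as the paper: apply the many-to-one formula, remove the drift via Girsanov to reduce to a standard Brownian motion expectation, recognize the integrand as $V(s)/G(s)$, and pull out the deterministic factor $B(s)^{-1}$. The only cosmetic difference is that you introduce a separate symbol $W$ for the drifted Brownian motion, whereas the paper writes it as $\xi_s-\sqrt{2}s$ with $\xi$ standard; the computations are otherwise identical.
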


\begin{proof}
Recall that $(\xi_t)_{t \geq 0}$ is standard Brownian motion with $\xi_0 = x$.  By the well-known Many-to-One Lemma for branching Brownian motion (see, for example, equation (3) of \cite{hh07}),
$$\E[Z(s)] = e^s \E\left[  e^{\sqrt{2} (\xi_s -\sqrt{2}s) } \sin \bigg( \frac{\pi (\xi_s -\sqrt{2} s)}{L(s)} \bigg) {\bf 1}_{\{0 < \xi_u - \sqrt{2}u < L(u) \: \forall u \le s\}} \right].$$  Using Girsanov's Theorem to relate Brownian motion with drift to standard Brownian motion,
\begin{align}
\E[Z(s)] &= e^s \E\left[ e^{-s - \sqrt{2}(\xi_s - x)} \cdot e^{\sqrt{2} \xi_s} \sin \bigg( \frac{\pi \xi_s}{L(s)} \bigg) {\bf 1}_{\{0 < \xi_u < L(u) \: \forall u \le s\}} \right] \nonumber \\
&= e^{\sqrt{2} x} \E \left[ \sin \bigg( \frac{\pi \xi_s}{L(s)} \bigg) {\bf 1}_{\{0 < \xi_u < L(u) \:  \forall u \le s\}} \right] \nonumber \\
&= e^{\sqrt{2} x} \E \left[ \frac{V(s)}{G(s)} \right] \nonumber \\
&= e^{\sqrt{2} x} B(s)^{-1} \E[V(s) A(s)^{-1} C(s)^{-1}], \nonumber
\end{align}
as claimed.
\end{proof}

\section{The case $L(s) = c(t-s)^{1/3}$}\label{stripsec2}

Fix any time $t > 0$, and for $0 \leq s \leq t$, define $$L(s) = c (t-s)^{1/3},$$ where $c$ was defined in (\ref{cdef}).
This right boundary was previously considered by Kesten \cite{kesten}.
Note that for $0 < s < t$, $$L'(s) = - \frac{c}{3} (t - s)^{-2/3}$$ and $$L''(s) = - \frac{2c}{9} (t - s)^{-5/3}.$$  Also, a straightforward calculation gives $$B(s)^{-1} = \exp \bigg( -(3 \pi^2)^{1/3}\big(t^{1/3} - (t - s)^{1/3}\big) \bigg) \bigg( \frac{t-s}{t} \bigg)^{1/6}.$$  We consider in this section branching Brownian motion with drift $-\sqrt{2}$ in which particles are killed if they reach $0$ or $L(s)$ at time $s$.  All particles will be killed by time $t$ because $L(t) = 0$.  We define $X_i(s)$, $N(s)$, and $Z(s)$ as in Section \ref{stripsec}.

\subsection{Estimating $\E[Z(s)]$}

In this section, we will estimate $\E[Z(s)]$ when $0 < s < t$.  In view of Lemma \ref{EZs}, this will require bounds on $A(s)$ and $C(s)$, which we present in Lemmas \ref{L: C} and \ref{L: A} below.  Note that the constants $c_1, \dots, c_6$ in these lemmas and in Proposition \ref{EZbound} do not depend on the initial position $x$ of the Brownian motion $(\xi_t)_{t \geq 0}$.

\begin{Lemma}\label{L: C}
There exist positive constants $c_1$ and $c_2$ such that for all $s \in (0, t)$, almost surely on the event $\{0 < \xi_u < L(u) \: \forall u \le s\}$ we have $$\exp (-c_1(t-s)^{-1/3}) \le C(s) \le \exp(c_2 (t-s)^{-1/3}).$$
\end{Lemma}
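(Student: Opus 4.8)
The plan is to exploit the explicit form of $L$ together with the observation that on the event $\{0 < \xi_u < L(u) \: \forall u \le s\}$ one has $(\xi_u - L(u)/2)^2 \le L(u)^2/4$. First I would compute the coefficients appearing in the definition of $C(s)$. Since $L(s) = c(t-s)^{1/3}$, $L'(s) = -\frac{c}{3}(t-s)^{-2/3}$ and $L''(s) = -\frac{2c}{9}(t-s)^{-5/3}$, a short simplification gives
$$\frac{L'(s)}{2L(s)} = -\frac{1}{6(t-s)}, \qquad \frac{L''(u)}{2L(u)} = -\frac{1}{9(t-u)^2}.$$
Hence the exponent defining $C(s)$ equals
$$-\frac{(\xi_s - L(s)/2)^2}{6(t-s)} + \int_0^s \frac{(\xi_u - L(u)/2)^2}{9(t-u)^2} \: du,$$
in which the first term is nonpositive and the integral term is nonnegative.

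For the lower bound on $C(s)$, I would discard the nonnegative integral and bound the first term from below via $(\xi_s - L(s)/2)^2 \le L(s)^2/4 = \frac{c^2}{4}(t-s)^{2/3}$, which holds on the strip event; this yields an exponent at least $-\frac{c^2}{24}(t-s)^{-1/3}$, so one may take $c_1 = c^2/24$. For the upper bound, I would discard the nonpositive first term and bound the integrand using the same inequality $(\xi_u - L(u)/2)^2 \le \frac{c^2}{4}(t-u)^{2/3}$, which reduces matters to
$$\int_0^s \frac{c^2}{36}(t-u)^{-4/3} \: du = \frac{c^2}{12}\big((t-s)^{-1/3} - t^{-1/3}\big) \le \frac{c^2}{12}(t-s)^{-1/3},$$
so that one may take $c_2 = c^2/12$. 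Note the integral converges because integration stops at $s < t$, so the singularity of the integrand at $u = t$ is never reached.

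There is essentially no obstacle here: the only points requiring minor care are the elementary simplification of $L'/(2L)$ and $L''/(2L)$, and the remark that the apparent singularity at $u=t$ is harmless. Since the resulting constants $c_1, c_2$ depend only on $c$ and not on $x$, $s$, or $t$, the uniformity and the independence from the initial position claimed in the statement are immediate.
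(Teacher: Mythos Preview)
Your proof is correct and follows essentially the same approach as the paper: both arguments drop the nonpositive boundary term for the upper bound and the nonnegative integral for the lower bound, then use $(\xi_u - L(u)/2)^2 \le L(u)^2/4$ on the strip event, arriving at the identical constants $c_1 = c^2/24$ and $c_2 = c^2/12$. The only cosmetic difference is that you simplify $L'/(2L)$ and $L''/(2L)$ explicitly at the outset, whereas the paper leaves them in the form $L''(u)L(u)/8$ until the final evaluation.
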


\begin{proof}
On the event $\{0 < \xi_u < L(u) \: \forall u \le s\}$, we have
\begin{align} \label{Cupper}
C(s) & \le  \exp \left(   \int_0^s \left|\frac{L''(u)}{2 L(u)} (\xi_u - L(u)/2)^2  \right|du \right) \nonumber \\
&\le \exp \left( \int_0^s \left| \frac{L''(u) L(u)}{8} \right| du \right) \nonumber \\
&= \exp \left( \frac{c^2}{36}  \int_0^s  (t-u)^{-4/3} \: du  \right) \nonumber \\
&\le \exp \left( \frac{c^2}{12} (t-s)^{-1/3}  \right).
\end{align}
On the other hand, on the event $\{0 < \xi_u < L(u) \: \forall u \le s\}$,
\begin{align} \label{Clower}
C(s) & \ge  \exp \left(  \frac{L'(s)}{2L(s) } (\xi_s -L(s)/2)^2  \right) \nonumber \\
&\ge \exp \left(-\frac{c^2}{24} (t-s)^{-1/3} \right).
\end{align}
The result follows from (\ref{Cupper}) and (\ref{Clower}).
\end{proof}

\begin{Lemma}\label{L: A}
There exist positive constants $c_3$ and $c_4$ such that for all $s \in (0, t)$, almost surely on the event $\{0 < \xi_u < L(u) \: \forall u \le s\}$ we have $$\exp(- c_3 (t-s)^{-1/3}) \le A(s) \le \exp (c_4 (t-s)^{-1/3}).$$
\end{Lemma}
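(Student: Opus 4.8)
The plan is to follow the same pattern as the proof of Lemma \ref{L: C}, the new feature being the stochastic integral inside $A(s)$. Writing $\log A(s) = \tfrac12 I_1(s) - \tfrac18 I_2(s)$ with $I_1(s) = \int_0^s L'(u)\, d\xi_u$ and $I_2(s) = \int_0^s L'(u)^2\, du$, I would first dispose of the deterministic piece: since $L'(u)^2 = \tfrac{c^2}{9}(t-u)^{-4/3}$, an elementary integration gives $I_2(s) = \tfrac{c^2}{3}\big((t-s)^{-1/3} - t^{-1/3}\big)$, so that $-\tfrac18 I_2(s)$ lies in $\big[-\tfrac{c^2}{24}(t-s)^{-1/3},\, 0\big]$.

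For the stochastic term, the key step is to integrate by parts. For each fixed $s < t$, the function $u \mapsto L'(u)$ is $C^1$ (hence of bounded variation) on the closed interval $[0,s]$ --- its only singularity is at $u = t$ --- so the It\^o integral agrees almost surely with a Riemann--Stieltjes integral, and
$$I_1(s) = L'(s)\xi_s - L'(0)\xi_0 - \int_0^s L''(u)\,\xi_u\, du.$$
On the event $\{0 < \xi_u < L(u)\ \forall u \le s\}$ I would then bound each term using $0 < \xi_u < L(u) = c(t-u)^{1/3}$: this gives $|L'(s)\xi_s| \le |L'(s)|\,L(s) = \tfrac{c^2}{3}(t-s)^{-1/3}$; also $|L'(0)\xi_0| \le |L'(0)|\,L(0) = \tfrac{c^2}{3}t^{-1/3} \le \tfrac{c^2}{3}(t-s)^{-1/3}$, which is where uniformity in the initial position $x$ enters (using $x < L(0)$); and $\big|\int_0^s L''(u)\xi_u\, du\big| \le \tfrac{2c^2}{9}\int_0^s (t-u)^{-4/3}\, du \le \tfrac{2c^2}{3}(t-s)^{-1/3}$. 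Summing, $|I_1(s)| \le \tfrac{4c^2}{3}(t-s)^{-1/3}$ on that event.

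Putting the two estimates together yields $|\log A(s)| \le \big(\tfrac{2c^2}{3} + \tfrac{c^2}{24}\big)(t-s)^{-1/3}$ on $\{0 < \xi_u < L(u)\ \forall u \le s\}$, which gives the claim with explicit constants such as $c_3 = \tfrac{17 c^2}{24}$ and $c_4 = \tfrac{2c^2}{3}$. The only genuinely delicate point is the integration-by-parts step: one must check that the identity holds on the full event without localizing the It\^o integral near the singular time $u = t$, which is harmless precisely because $s$ is strictly less than $t$, so $L'$ stays bounded and smooth on $[0,s]$. Everything else is the same routine bounding as in Lemma \ref{L: C}.
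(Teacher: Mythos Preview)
Your proof is correct and follows essentially the same approach as the paper: bound the deterministic integral $\int_0^s L'(u)^2\,du$ directly, then handle the stochastic integral via integration by parts using that $L'$ is $C^1$ on $[0,s]$, and bound each resulting term using $0<\xi_u<L(u)$. The only cosmetic difference is that the paper tracks the signs of $L'(s)\xi_s$ and $\int_0^s L''(u)\xi_u\,du$ (both terms have a definite sign since $L'<0$, $L''<0$, $\xi>0$), whereas you bound everything in absolute value; this yields slightly larger explicit constants on your side but is otherwise the same argument.
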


\begin{proof}
Observe that $$\int_0^s L'(u)^2 \: du = \frac{c^2}{3} \left( (t - s)^{-1/3} - t^{-1/3} \right) \leq \frac{c^2}{3} (t - s)^{-1/3}.$$  Therefore, $$\exp \left( \frac{1}{2} \int_0^s L'(u) \: d\xi_u \right) \exp \left( - \frac{c^2}{24} (t-s)^{-1/3} \right) \le A(s) \le \exp \left( \frac{1}{2} \int_0^s L'(u) \: d\xi_u \right),$$ so it suffices to prove the result with $\exp(\frac{1}{2} \int_0^s L'(u) \: d\xi_u)$ in place of $A(s)$.

Using the Integration by Parts Formula and the fact that $L'$ has finite variation,
\begin{align*}
\int_0^s L'(u) \: d\xi_u &= L'(s)\xi_s -L'(0)\xi_0  -\int_0^s L''(u) \xi_u \: du.
\end{align*}
On the event $\{0 < \xi_u < L(u) \: \forall u \le s\}$, we have $0 \le - L'(s)\xi_s \le \frac{c^2}{3}(t-s)^{-1/3}$, which is also valid for $s = 0$, and $$0 \le - \int_0^s L''(u) \xi_u \: du \leq \frac{2c^2}{9} \int_0^s (t-u)^{-4/3} \: du \le \frac{2c^2}{3} (t-s)^{-1/3}.$$  These inequalities yield the conclusion.
\end{proof}

\begin{Prop} \label{EZbound}
There exist positive constants $c_5$ and $c_6$ such that for all $s \in (0, t)$,
$$Z(0) B(s)^{-1} \exp(- c_5 (t-s)^{-1/3})  \le \E[Z(s)] \le Z(0) B(s)^{-1} \exp(c_6 (t-s)^{-1/3}).$$
\end{Prop}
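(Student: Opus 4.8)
The plan is to feed the bounds on $A(s)$ and $C(s)$ from Lemmas \ref{L: A} and \ref{L: C} into the identity of Lemma \ref{EZs}, reducing the whole estimate to the single quantity $\E[V(s)]$, which the martingale property of $V$ determines exactly. I would first reduce to one initial particle: if the initial configuration is $x_1, \dots, x_k$, the branching property gives $Z(s) = \sum_{j=1}^k Z^{(j)}(s)$ with the $Z^{(j)}(s)$ independent copies of the one-particle process, and since $\sum_j Z^{(j)}(0) = Z(0)$ it is enough to prove the bound when there is a single particle at $x \in (0, L(0))$, so that $Z(0) = e^{\sqrt 2 x}\sin(\pi x/L(0))$.

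In the one-particle case Lemma \ref{EZs} gives $\E[Z(s)] = e^{\sqrt 2 x} B(s)^{-1}\E[V(s)A(s)^{-1}C(s)^{-1}]$, so it suffices to sandwich $\E[V(s)A(s)^{-1}C(s)^{-1}]$ between $\sin(\pi x/L(0))$ times $\exp(\mp c(t-s)^{-1/3})$. The key observation is that $V(s) \ge 0$: by \eqref{Vdef} it vanishes off the event $E_s = \{0 < \xi_u < L(u)\ \forall u \le s\}$, while on $E_s$ one has $G(s) > 0$ and $\sin(\pi\xi_s/L(s)) > 0$ because $0 < \xi_s < L(s)$. Hence I may multiply the pathwise bounds
$$\exp\big(-(c_2 + c_4)(t-s)^{-1/3}\big) \le A(s)^{-1}C(s)^{-1} \le \exp\big((c_1 + c_3)(t-s)^{-1/3}\big),$$
which hold a.s.\ on $E_s$ by Lemmas \ref{L: A} and \ref{L: C}, by $V(s)$ and take expectations, obtaining the same two-sided bound for $\E[V(s)A(s)^{-1}C(s)^{-1}]$ in terms of $\E[V(s)]$. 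Since $V$ is a martingale, $\E[V(s)] = V(0)$, and since $0 < x < L(0)$ the indicator in \eqref{Vdef} equals $1$ at time $0$, so $V(0) = G(0)\sin(\pi x/L(0))$ with $G(0) = \exp\big(\frac{L'(0)}{2L(0)}(x - L(0)/2)^2\big)$. Using $L(0) = ct^{1/3}$, $L'(0) = -\frac{c}{3}t^{-2/3}$ and $0 < x < L(0)$, this factor satisfies $\exp(-\frac{c^2}{24}t^{-1/3}) \le G(0) \le 1$, hence $\exp(-\frac{c^2}{24}(t-s)^{-1/3}) \le G(0) \le 1$ since $t^{-1/3} \le (t-s)^{-1/3}$. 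Combining everything and recalling $Z(0) = e^{\sqrt 2 x}\sin(\pi x/L(0))$ gives the proposition with, for instance, $c_5 = c_2 + c_4 + c^2/24$ and $c_6 = c_1 + c_3$.

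I do not expect a deep obstacle; once Lemmas \ref{EZs}, \ref{L: A} and \ref{L: C} are available this is mostly bookkeeping. The point needing the most care is the passage from pathwise to expected bounds: it uses both $V(s) \ge 0$ (so that the inequalities for $A(s)^{-1}C(s)^{-1}$ on the support of $V$ are preserved under integration) and the fact that $V$ is a genuine martingale, not just a local martingale or supermartingale, so that $\E[V(s)] = V(0)$ is an exact identity — both are supplied by the excerpt. A secondary point is that $V(0)$ carries the extra factor $G(0)$ (the $C$-type term at time $0$), which is not exactly $1$; but it perturbs the exponent by only $O((t-s)^{-1/3})$ and is absorbed into $c_5$.
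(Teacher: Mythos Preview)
Your proposal is correct and follows essentially the same route as the paper: reduce to a single initial particle, invoke Lemma~\ref{EZs}, use the pathwise bounds of Lemmas~\ref{L: C} and~\ref{L: A} together with $V(s)\ge 0$ and $V(s)=0$ off $E_s$ to sandwich $\E[V(s)A(s)^{-1}C(s)^{-1}]$ by multiples of $\E[V(s)]$, apply the martingale identity $\E[V(s)]=V(0)=G(0)\sin(\pi x/L(0))$, and absorb the $G(0)$ factor via $t^{-1/3}\le (t-s)^{-1/3}$. The only cosmetic difference is that you perform the reduction to one particle at the outset while the paper does it at the end, and you make the resulting constants $c_5,c_6$ explicit.
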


\begin{proof}
First, suppose that initially there is a single particle at $x$ with $0 < x < L(0)$.  Recall the definition of $V(s)$ from (\ref{Vdef}).  Because $V(s) = 0$ outside of the event $\{0 < \xi_u < L(u) \: \forall u \le s\}$, it follows from Lemmas \ref{EZs}, \ref{L: C}, and \ref{L: A} that there are constants $c_7$ and $c_8$ such that
$$e^{\sqrt{2} x} B(s)^{-1} \E[V(s)] \exp(-c_7 (t-s)^{-1/3}) \leq \E[Z(s)] \leq e^{\sqrt{2} x} B(s)^{-1} \E[V(s)] \exp(c_8 (t-s)^{-1/3}).$$  Because $(V(s), s \geq 0)$ is a martingale, $$e^{\sqrt{2}x}\E[V(s)] = e^{\sqrt{2}x}V(0) = e^{\sqrt{2} x} G(0) \sin \bigg( \frac{\pi x}{L(0)} \bigg) = Z(0) G(0).$$  The result when there is initially a single particle at $x$ follows because $$1 \geq G(0) = \exp \bigg( \frac{L'(0)}{2L(0)} (\xi_0 - L(0)/2)^2 \bigg) \geq \exp \bigg( \frac{L'(0) L(0)}{8} \bigg) = \exp \bigg( - \frac{c^2}{24} t^{-1/3} \bigg).$$  Because $B(s)$ and the constants $c_5$ and $c_6$ do not depend on the position $x$ of the initial particle, the result follows for general initial configurations by summing over the particles.
\end{proof}

\begin{Cor}\label{condexpZ}
Let $({\cal F}_u, u \geq 0)$ be the natural filtration associated with the branching Brownian motion.  Let $0 < r < s < t$.  Let
\begin{align}
B_r(s) &= \exp \bigg( \int_r^s \frac{\pi^2}{2 L(u)^2} \: du - \int_r^s \frac{L'(u)}{2 L(u)} \: du \bigg) \nonumber \\
&= \exp \bigg(  (3 \pi^2)^{1/3} \big((t-r)^{1/3} - (t-s)^{1/3} \big) \bigg) \bigg( \frac{t-r}{t-s} \bigg)^{1/6}. \nonumber
\end{align}
Then $$Z(r) B_r(s)^{-1} \exp(-c_5 (t - s)^{-1/3}) \leq \E[Z(s)|{\cal F}_r] \leq Z(r) B_r(s)^{-1} \exp(c_6(t - s)^{-1/3}),$$ where $c_5$ and $c_6$ are the constants from Proposition \ref{EZbound}.
\end{Cor}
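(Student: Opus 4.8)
The plan is to deduce this from Proposition \ref{EZbound} by applying the branching property (the Markov property for branching Brownian motion) at time $r$. Conditionally on ${\cal F}_r$, the evolution of the process after time $r$ is, by the branching property, distributed as a branching Brownian motion in a strip started from the configuration $X_1(r) \geq \dots \geq X_{N(r)}(r)$ (which lies in $(0, L(r))$ and is ${\cal F}_r$-measurable), with particles killed on reaching $0$ or the curve $L(\cdot)$. Since $Z(s)$ is a measurable function of the post-$r$ configuration, $\E[Z(s) \mid {\cal F}_r]$ equals the corresponding expectation for this shifted process. (The degenerate case $N(r) = 0$ is trivial, since then $Z(r) = Z(s) = 0$.)

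Next I would recognize the shifted process as an instance of the setting of Section \ref{stripsec2}. Shifting the time origin to $r$, i.e.\ writing $u = s - r$, the right boundary becomes $\tilde L(u) := L(r + u) = c\big((t - r) - u\big)^{1/3}$, which is exactly the boundary $c(t' - u)^{1/3}$ of Section \ref{stripsec2} with $t' = t - r$ in place of $t$. Let $\tilde B$ denote the analogue of the function $B$ with $L$ replaced by $\tilde L$. Applying Proposition \ref{EZbound} to this process, run for time $s - r \in (0, t - r)$, with initial configuration $\{X_i(r)\}$, gives
\begin{equation*}
\tilde Z(0)\, \tilde B(s - r)^{-1} \exp\big(-c_5 \big((t-r) - (s-r)\big)^{-1/3}\big) \leq \E[Z(s) \mid {\cal F}_r] \leq \tilde Z(0)\, \tilde B(s-r)^{-1} \exp\big(c_6 \big((t-r)-(s-r)\big)^{-1/3}\big),
\end{equation*}
where $\tilde Z(0) = \sum_{i=1}^{N(r)} e^{\sqrt{2} X_i(r)} \sin(\pi X_i(r)/L(r)) = Z(r)$. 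Since $(t - r) - (s - r) = t - s$, the exponential error terms are already those in the statement, and the constants $c_5, c_6$ transfer unchanged because they do not depend on the initial positions and the governing quantity $t - s$ is invariant under the time shift.

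It then remains to check that $\tilde B(s - r) = B_r(s)$ and that $B_r(s)$ has the claimed closed form; both are routine computations. For the first, the change of variables $v = w - r$ in the integrals defining $\tilde B$, together with $\tilde L(v) = L(v + r)$, yields $\tilde B(s - r) = \exp\big(\int_r^s \tfrac{\pi^2}{2L(w)^2}\,dw - \int_r^s \tfrac{L'(w)}{2L(w)}\,dw\big) = B_r(s)$. For the second, substituting $t \mapsto t - r$ and $s \mapsto s - r$ in the explicit formula for $B(s)^{-1}$ recorded at the start of Section \ref{stripsec2} gives the displayed closed form for $B_r(s)$. I do not anticipate a genuine obstacle here: the only point requiring a little care is phrasing the branching-property step cleanly and confirming that the hypotheses of Proposition \ref{EZbound} (notably $0 < X_i(r) < \tilde L(0) = L(r)$ and $0 < s - r < t - r$) are met.
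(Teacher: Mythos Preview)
Your proof is correct and follows exactly the paper's approach: apply the Markov property at time $r$ and then invoke Proposition \ref{EZbound} with the shifted time horizon $t' = t - r$ and boundary $\tilde L(u) = c(t' - u)^{1/3} = L(u+r)$. The paper compresses this into a single sentence, while you have spelled out the identification $\tilde Z(0) = Z(r)$, the invariance of the error term under the shift, and the verification $\tilde B(s-r) = B_r(s)$, but the content is the same.
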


\begin{proof}
Apply the Markov Property at time $r$, and then apply Proposition \ref{EZbound} with $t^* = t - r$ and $L^*(u) = c(t^* - u)^{1/3} = c(t - r - u)^{1/3} = L(u + r)$.
\end{proof}

\subsection{Bounding the density}

We now use the estimate of $\E[Z(s)]$ from Proposition \ref{EZbound} to obtain bounds on the density.  For $0 \leq r < s < t$, let $q_{r,s}(x,y)$ represent the density of particles at time $s$ that are descended from a particle at the location $x$ at time $r$.  That is, if $A$ is a Borel subset of $(0, L(s))$, then the expected number of particles in $A$ at time $s$ descended from the particle which is at $x$ at time $r$ is $$\int_A q_{r,s}(x,y) \: dy.$$  Note that $q_s(x,y) = q_{0,s}(x,y)$. For $x,y>0$ and $0 \le r\le s \le t$, let $$\psi_{r,s}(x,y) = \frac1{L(s)} e^{-(3 \pi^2)^{1/3}((t-r)^{1/3} - (t-s)^{1/3})} \bigg( \frac{t-s}{t-r} \bigg)^{1/6} e^{\sqrt{2} x} \sin \bigg( \frac{\pi x}{L(r)} \bigg) e^{-\sqrt{2} y} \sin \bigg( \frac{\pi y}{L(s)} \bigg).$$
This expression becomes simpler if we view the process from time $t$, as we get $$\psi_{t-u,t-v}(x,y) = \frac1{c} e^{-(3 \pi^2)^{1/3} (u^{1/3} - v^{1/3})}\left(\frac{1}{uv}\right)^{1/6} e^{\sqrt{2} x} \sin \bigg( \frac{\pi x}{cu^{1/3}} \bigg) e^{-\sqrt{2} y} \sin \bigg( \frac{\pi y}{cv^{1/3}} \bigg).$$

\begin{Prop}\label{densityprop}
Fix a positive constant $b$.  There exists a constant $A > 0$ and positive constants $C'$ and $C''$, with $C''$ depending on $b$, such that if $r + L(r)^2 \leq s \leq t - A$, then
\begin{equation}\label{qrsxyl}
q_{r,s}(x,y) \geq C' \psi_{r,s}(x,y),
\end{equation}
and if $r + bL(r)^2 \leq s \leq t - A$, then
\begin{equation}\label{qrsxyu}
q_{r,s}(x,y) \leq C'' \psi_{r,s}(x,y).
\end{equation}
\end{Prop}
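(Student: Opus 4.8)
The plan is to reduce to $r=0$, estimate the density over a short final ``mixing window'' $[u,s]$ directly, and then propagate this estimate backwards using the control of $\E[Z(u)]$ that we already have. By the Markov property at time $r$ (exactly as in Corollary \ref{condexpZ}, with $t$ replaced by $t-r$ and $L^*(v)=L(v+r)$), it suffices to treat $r=0$: I would show that $q_s(x,y)\ge C'\psi_{0,s}(x,y)$ when $L(0)^2\le s\le t-A$, and $q_s(x,y)\le C''\psi_{0,s}(x,y)$ when $bL(0)^2\le s\le t-A$. The tool that ties the two time scales together is the first-moment semigroup identity $q_{r,s}(x,y)=\int_0^{L(u)}q_{r,u}(x,z)\,q_{u,s}(z,y)\,dz$ for $r<u<s$, which follows from the branching property.

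The first step is a base-case estimate: there are constants $c,C>0$ such that if $s-u=L(u)^2$ and $t-s\ge A$, then $c\,\psi_{u,s}(z,y)\le q_{u,s}(z,y)\le C\,\psi_{u,s}(z,y)$ for all $z\in(0,L(u))$ and $y\in(0,L(s))$. Because $t-s\ge A$, one checks that $t-u\le 2(t-s)$, so $L$ varies over $[u,s]$ by a factor at most $2^{1/3}$ (the strip is essentially constant), and also $B_u(s)^{-1}\asymp 1$ on this window. Using the Brownian-bridge representation $q_{u,s}(z,y)=(2\pi(s-u))^{-1/2}e^{-(z-y)^2/2(s-u)}e^{\sqrt2(z-y)}\,\P(0\le B^{br}_{z,y,s-u}(v)\le L(u+v)\ \forall v)$ (the $e^{\pm(s-u)}$ branching and Girsanov factors cancel, as in the proof of Lemma \ref{pllem}), the upper bound follows by bracketing the curved strip inside the constant strip $[0,L(u)]$ and applying either Lemma \ref{stripdensity} or the midpoint decomposition of Lemma \ref{pllem}; the bridge-in-a-strip probability over a time of order $L^2$ is of order $e^{(z-y)^2/2L^2}\sin(\pi z/L)\sin(\pi y/L)$, which is precisely the shape of $\psi$. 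The lower bound is obtained the same way, bracketing below by the constant strip $[0,L(s)]$ for $z,y\in(0,L(s))$ and dealing with $z\in(L(s),L(u))$ (a band of width $O(L(t-s)^{-1/3})$ near the moving boundary) by a short-time estimate.

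Granting the base case, I would pick the intermediate time $u$ as follows. When $s\ge L(0)^2$ the ratio $L(u)^2/(s-u)$ equals $L(0)^2/s\le 1$ at $u=0$ and blows up as $u\uparrow s$, so by continuity there is $u\in[0,s)$ with $s-u=L(u)^2$; moreover $t-u\ge t-s\ge A$, so the base case applies on $[u,s]$. For the \emph{upper bound} I then bound $q_{u,s}(z,y)\le C\psi_{u,s}(z,y)=CL(s)^{-1}B_u(s)^{-1}e^{\sqrt2 z}\sin(\pi z/L(u))e^{-\sqrt2 y}\sin(\pi y/L(s))$ for $z\in(0,L(u))$, integrate against $q_{0,u}(x,z)$, recognise $\int_0^{L(u)}q_{0,u}(x,z)e^{\sqrt2 z}\sin(\pi z/L(u))\,dz=\E[Z(u)]$ for a single initial particle at $x$, apply Proposition \ref{EZbound} (its correction factor $\exp(c_6(t-u)^{-1/3})$ is bounded since $t-u\ge A$), and finish with the identity $B_0(u)^{-1}B_u(s)^{-1}=B_0(s)^{-1}$ together with $Z(0)=e^{\sqrt2 x}\sin(\pi x/L(0))$, which delivers $C''\psi_{0,s}(x,y)$. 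In the short-gap regime $bL(0)^2\le s<L(0)^2$ the window $[u,s]$ cannot be made long enough to mix, so I would instead take $u=0$ and bound $q_s(x,y)$ directly from the bridge representation on $[0,s]$; the constant is finite but grows like a power of $1/b$ as $b\to 0$, which is where the $b$-dependence of $C''$ comes in.

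For the \emph{lower bound} I would restrict the $z$-integral to the bulk $[\varepsilon L(u),(1-\varepsilon)L(u)]$, where $\sin(\pi z/L(u))\ge\sin(\pi\varepsilon)$, so that $q_s(x,y)\ge c\sin(\pi\varepsilon)L(s)^{-1}B_u(s)^{-1}e^{-\sqrt2 y}\sin(\pi y/L(s))\int_{\varepsilon L(u)}^{(1-\varepsilon)L(u)}q_{0,u}(x,z)e^{\sqrt2 z}\,dz$, and it remains to show this last integral is at least $c''Z(0)B_0(u)^{-1}$. Since $\sin\le 1$, the integral is at least $\E[Z(u)]$ minus the contribution to $\E[Z(u)]$ of $z$ outside the bulk; by Proposition \ref{EZbound} the first term is at least $\tfrac12 Z(0)B_0(u)^{-1}$ once $A$ is large, while the second is at most $C\varepsilon^{3}Z(0)B_0(u)^{-1}$, bounded either by invoking the already-proved upper bound $q_{0,u}(x,z)\le C\psi_{0,u}(x,z)$ or by crudely dominating $q_{0,u}(x,z)$ near each killing boundary by the explicit one-boundary (half-line) density; choosing $\varepsilon$ small finishes. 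I expect this lower bound to be the main obstacle: one needs the base-case lower estimate $q_{u,s}(z,y)\ge c\psi_{u,s}(z,y)$ to hold \emph{uniformly down to the killing boundaries}, where both sides degenerate like $\sin(\pi z/L(u))\sin(\pi y/L(s))$, and one also needs to quantify how much of the $Z$-mass has drifted near the boundary by time $u$. The upper bound should by comparison be routine, requiring only an upper Brownian-bridge estimate on the final window together with the available control of $\E[Z(s)]$.
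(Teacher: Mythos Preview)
Your overall architecture matches the paper's: split at an intermediate time $u$ with $s-u$ of order $L(s)^2$, estimate $q_{u,s}(z,y)$ directly on that short window, and feed the $z$-integral back into the control of $\E[Z(u)]$ from Proposition~\ref{EZbound}/Corollary~\ref{condexpZ}. Your lower-bound scheme (restrict $z$ to a bulk, then subtract the non-bulk contribution to $\E[Z(u)]$ using the already-proved upper bound) is essentially equivalent to what the paper does, which instead integrates over $[0,L(s)]$ and subtracts the thin band $[L(u)-c^3,L(u)]$ via the upper bound.

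There is, however, a genuine gap in your base-case \emph{upper} bound. Bracketing the curved strip by the constant strip $[0,L(u)]$ and applying Lemma~\ref{stripdensity} (or the midpoint argument with constant boundaries) produces a factor $\sin(\pi y/L(u))$, not $\sin(\pi y/L(s))$. These are \emph{not} uniformly comparable on $(0,L(s))$: since $L(u)-L(s)$ is of order a constant (about $bc^3/3$), as $y\uparrow L(s)$ the numerator behaves like $(L(u)-y)/L(u)\asymp (L(u)-L(s))/L(u)$ while the denominator behaves like $(L(s)-y)/L(s)\to 0$, so the ratio blows up. Your upper bound would therefore fail precisely near the right edge $y\approx L(s)$, which is where Proposition~\ref{densityprop} is most delicate. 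The paper repairs this by bracketing not by a constant strip but by a \emph{piecewise-linear} boundary $\hat L$ that stays at $L(u)$ initially and then descends linearly to reach $L(s)$ exactly at time $s$; one checks $\hat L\ge L$ on $[u,s]$, whence $q_{u,s}\le \hat q_{u,s}$, and Lemma~\ref{pllem} (which is stated for exactly such a boundary going from $L$ down to $K$) then delivers the correct factor $\sin(\pi y/L(s))$. This is the whole point of Lemma~\ref{pllem}, and your sentence ``bracketing the curved strip inside the constant strip $[0,L(u)]$ and applying \dots\ Lemma~\ref{pllem}'' conflates the two comparisons.

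Two smaller remarks. First, the paper takes $u=s-bL(s)^2$ for the upper bound and $u=s-L(s)^2/2$ for the lower. Since $L(s)<L(r)$, this automatically yields $u>r$ whenever $s\ge r+bL(r)^2$, so your separate treatment of the short-gap range $bL(r)^2\le s-r<L(r)^2$ is unnecessary; it also guarantees $u-r\ge L(r)^2/2$ in the lower-bound step, so the upper bound (with $b=1/2$) is legitimately available at time $u$. With your implicit choice $s-u=L(u)^2$, the intermediate time $u$ collapses to $r$ when $s-r=L(r)^2$, obstructing the bootstrap. Second, your claimed two-sided base-case estimate $c\,\psi_{u,s}\le q_{u,s}\le C\,\psi_{u,s}$ does not hold uniformly in $z\in(0,L(u))$ in the lower direction either (for $z$ just below $L(s)$ the constant-strip comparison again gives the wrong sine), but as you correctly anticipate this is harmless, since you only use the lower base-case bound on the bulk.
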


\begin{proof}
Let $\E_{r,x}$ denote expectation for the process starting from a single particle at $x$ at time $r$.  Note that if $r < u < s$, then
\begin{equation}\label{qrsu}
q_{r,s}(x,y) = \int_0^{L(u)} q_{r,u}(x,z) q_{u,s}(z,y) \: dz.
\end{equation}

We first prove the upper bound.  We may assume $b \leq 1$.  Assume $r + bL(r)^2 \leq s \leq t - A$.  Let $u = s - { b}L(s)^2$.  Note that $u > r$ because $L(s) < L(r)$.   Let $m = -2L'(s) = (2c/3)(t-s)^{-2/3}$.  For $u \leq v \leq s$, let
\begin{displaymath}
{\hat L}(v) = \left\{
\begin{array}{ll}  L(u) & \mbox{ if }u \leq v \leq s - m^{-1}(L(u) - L(s))  \\
L(s) + m(s-v) & \mbox{ if } s - m^{-1}(L(u)-L(s)) \leq v \leq s.
\end{array} \right.
\end{displaymath}

\begin{figure}\centering
\includegraphics[scale=1]{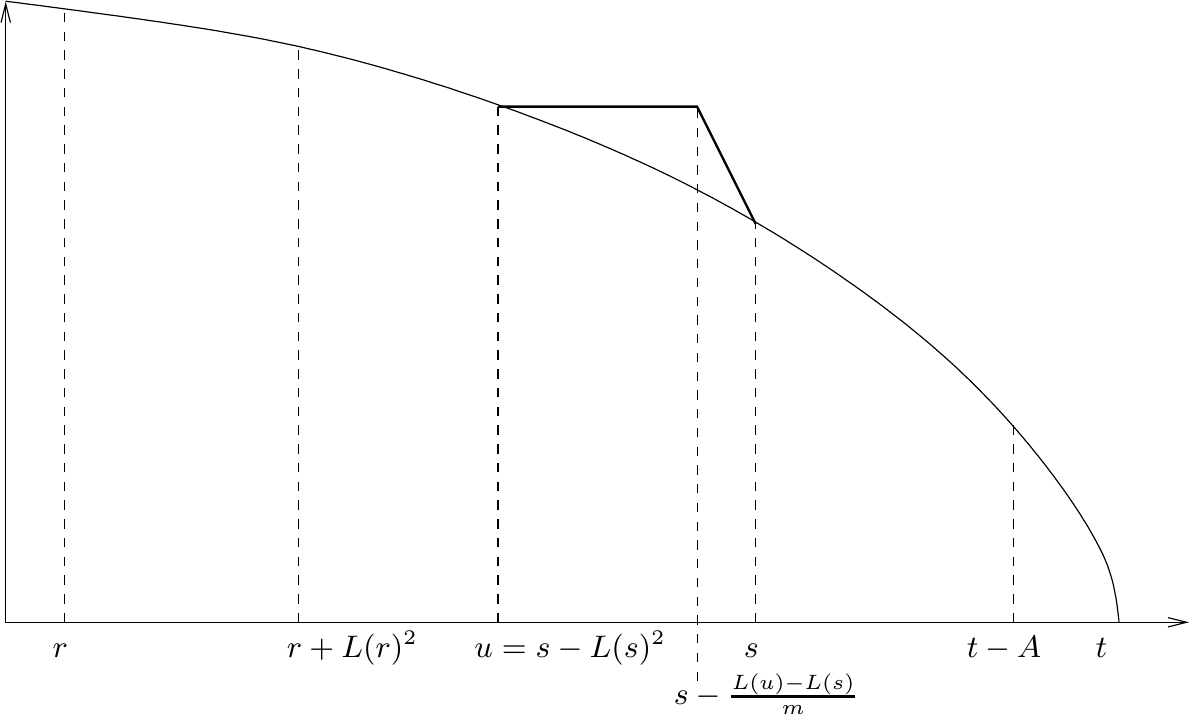}
\caption{ the function $\hat L$ }
\label{F:branching}
\end{figure}

Note that ${\hat L}(v) \geq L(v)$ for all $v \in [u, s]$.  Therefore, if we define ${\hat q}_{u,s}(z,y)$ in the same way as $q_{u,s}(z,y)$, except that for $v \in [u,s]$, particles are killed when they reach ${\hat L}(v)$ instead of when they reach $L(v)$, then
\begin{equation}\label{qqhat}
q_{u,s}(z,y) \leq {\hat q}_{u,s}(z,y).
\end{equation}

We now wish to apply Lemma \ref{pllem} with $K = L(s)$, $L = L(u)$ and $t = s - u$.  We need to check first that $L(s)+m(s-u)/2\le 2L(u)$ and second that $m^{-1}(L(u)-L(s))\le (s-u)/2$.
For the first condition, as long as $A$ is chosen to be large enough that $L(t - A) \geq c^3/3$, we have
$$L(s) + \frac{m(s-u)}{2} = L(s) + \frac{mbL(s)^2}{2} = L(s) + \frac{b c^3}{3} \leq 2L(s) \leq 2L(u).$$
The second condition also holds because $$m^{-1}(L(u) - L(s)) \leq m^{-1}|L'(s)|(s - u) = \frac{s-u}{2}.$$
Therefore, by Lemma \ref{pllem},
\begin{equation}\label{hatq}
{\hat q}_{u,s}(z,y) \leq \frac{C L(u)^4}{(b L(s)^2)^{5/2}} e^{\sqrt{2} z} \sin \bigg( \frac{\pi z}{L(u)} \bigg) e^{-\sqrt{2} y} \sin \bigg( \frac{\pi y}{L(s)} \bigg).
\end{equation}
Note that
\begin{equation}\label{LuLs}
L(u) - L(s) \leq -L'(s)(s - u) = \frac{b c^3}{3}.
\end{equation}
Therefore, if $A$ is large enough that $L(t - A) \geq c^3/3$, then $L(u) \leq 2 L(s)$,
so combining (\ref{qrsu}), (\ref{qqhat}), (\ref{hatq}), we get
\begin{align}
q_{r,s}(x,y) &\leq \frac{C}{L(s)} e^{-\sqrt{2} y} \sin \bigg( \frac{\pi y}{L(s)} \bigg) \int_0^{L(u)} e^{\sqrt{2} z} \sin \bigg( \frac{\pi z}{L(u)} \bigg) q_{r,u}(x,z) \: dz \nonumber \\
&= \frac{C}{L(s)} e^{-\sqrt{2} y} \sin \bigg( \frac{\pi y}{L(s)} \bigg) \E_{r,x}[Z(u)]. \nonumber
\end{align}
Therefore, using Corollary \ref{condexpZ} to bound $\E_{r,x}[Z(u)]$,
$$q_{r,s}(x,y) \leq \frac{C}{L(s)} e^{-(3 \pi^2)^{1/3}((t - r)^{1/3} - (t - u)^{1/3})} \bigg( \frac{t-u}{t-r} \bigg)^{1/6} e^{\sqrt{2} x} \sin \bigg( \frac{\pi x}{L(r)} \bigg) e^{-\sqrt{2} y} \sin \bigg( \frac{\pi y}{L(s)} \bigg).$$
The upper bound (\ref{qrsxyu}) now follows because $(t - u)^{1/3} \leq (t - s)^{1/3} + bc^2/3$ by (\ref{LuLs}) and $t - u = (t - s) + (s - u) \leq C(t - s)$.

We next prove the lower bound.  Assume that $r + L(r)^2 \leq s \leq t - A$.  Let $u = s - L(s)^2/2$.  Note that $u > r$ because $L(s) < L(r)$.  For $0 \leq z \leq L(s)$, define ${\tilde q}_{u,s}(z,y)$ in the same way as $q_{u,s}(z,y)$ except that for $v \in [u,s]$, particles are killed when they reach $L(s)$ instead of when they reach $L(v)$.  Then
\begin{equation}\label{qqtild}
q_{u,s}(z,y) \geq {\tilde q}_{u,s}(z,y).
\end{equation}
By Lemma \ref{stripdensity}, if $0 \leq z \leq L(s)$, then because $$\sum_{n=2}^{\infty} n^2 e^{-\pi^2 (n^2 - 1)(s-u)/2L(s)^2} = \sum_{n=2}^{\infty} n^2 e^{-\pi^2 (n^2 - 1)/4} < 1,$$ we have
\begin{equation}\label{tildq}
{\tilde q}_{u,s}(z,y)\geq \frac{C}{L(s)} e^{-\pi^2 (s-u)/2L(s)^2} e^{\sqrt{2} z} \sin \bigg( \frac{\pi z}{L(s)} \bigg) e^{-\sqrt{2} y} \sin \bigg( \frac{\pi y}{L(s)} \bigg).
\end{equation}
By (\ref{qrsu}), (\ref{qqtild}), and (\ref{tildq}),
$$q_{r,s}(x,y) \geq \frac{C}{L(s)} e^{-\sqrt{2} y} \sin \bigg( \frac{\pi y}{L(s)} \bigg) \int_0^{L(s)} e^{\sqrt{2} z} \sin \bigg( \frac{\pi z}{L(s)} \bigg) q_{r,u}(x,z) \: dz.$$  Using (\ref{LuLs}) with $b=1/2$, we get $L(u) - L(s) \leq c^3/6$.  Therefore, there is a positive constant $C$ such that $\sin(\pi z/L(s)) \geq C \sin(\pi z/L(u))$ for all $z \leq L(u) - c^3$.
It follows that
\begin{equation}\label{qlow1}
q_{r,s}(x,y) \geq \frac{C}{L(s)} e^{-\sqrt{2} y} \sin \bigg( \frac{\pi y}{L(s)} \bigg) \bigg( \E_{r,x}[Z(u)] - \int_{L(u) - c^3}^{L(u)} e^{\sqrt{2} z} \sin \bigg( \frac{\pi z}{L(u)} \bigg) q_{r,u}(x,z) \: dz \bigg).
\end{equation}
By Corollary \ref{condexpZ},
\begin{equation} \label{qlow2}
\E_{r,x}[Z(u)] \geq C e^{-(3 \pi^2)^{1/3}((t-r)^{1/3} - (t-u)^{1/3})} \bigg( \frac{t-u}{t-r} \bigg)^{1/6} e^{\sqrt{2} x} \sin \bigg( \frac{\pi x}{L(r)} \bigg).
\end{equation}
Also, because $u - r = s - L(s)^2/2 - r \geq L(r)^2 - L(s)^2/2 \geq L(r)^2/2$, we can apply the upper bound (\ref{qrsxyu}) to get
\begin{align}\label{qlow3}
&\int_{L(u) - c^3}^{L(u)} e^{\sqrt{2} z} \sin \bigg( \frac{\pi z}{L(u)} \bigg) q_{r,u}(x,z) \: dz \nonumber \\
&\qquad \leq \frac{C}{L(u)} e^{-(3 \pi^2)^{1/3}((t - r)^{1/3} - (t-u)^{1/3})} \bigg( \frac{t-u}{t-r} \bigg)^{1/6} e^{\sqrt{2} x} \sin \bigg( \frac{\pi x}{L(r)} \bigg) \int_{L(u) - c^3}^{L(u)} \sin\bigg( \frac{\pi z}{L(u)} \bigg)^2 \: dz \nonumber \\ 
&\qquad \leq \frac{C}{L(u)^3} e^{-(3 \pi^2)^{1/3}((t - r)^{1/3} - (t-u)^{1/3})} \bigg( \frac{t-u}{t-r} \bigg)^{1/6} e^{\sqrt{2} x} \sin \bigg( \frac{\pi x}{L(r)} \bigg).
\end{align}
Choosing $A$ sufficiently large, the lower bound (\ref{qrsxyl}) now follows from (\ref{qlow1}), (\ref{qlow2}), (\ref{qlow3}), and the fact that $t - u \geq t - s$.
\end{proof}

\subsection{Particles hitting the right boundary}

For $0 \leq s < u \leq t$, let $R_{s,u}$ denote the number of particles that are killed at $L(r)$ for some $r \in [s, u]$.  Let $\E_{s,x}$ denote expectation for the process started from a single particle at $x$ at time $s$.

\begin{Lemma}\label{ssLu2}
If $0 \leq s < u < t$, then
$$\E_{s,x}[R_{s, u}] \leq \frac{x e^{\sqrt{2} x} e^{-\sqrt{2} L(u)}}{L(u)}.$$
\end{Lemma}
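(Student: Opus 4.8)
The plan is to reduce the estimate to the behaviour of one carefully chosen additive functional of the branching Brownian motion. The key observation is that $h(y) := y e^{\sqrt{2}y}$ satisfies $\frac{1}{2}h''(y) - \sqrt{2}h'(y) + h(y) = 0$, so $h$ is harmonic for the first-moment operator of branching Brownian motion with drift $-\sqrt{2}$ and binary branching at rate $1$; moreover $h(0) = 0$ and $h$ is increasing on $(0,\infty)$. Because of this, the quantity
$$W_r := \sum_{v \in \mathcal{N}(r)} h(X_v(r)) \ + \sum_{\substack{v \text{ killed at the right boundary} \\ \text{at some time } r_v \in (s,r]}} h\big(L(r_v)\big),$$
where $\mathcal{N}(r)$ denotes the set of particles alive at time $r$, should be a nonnegative local martingale, hence a supermartingale, with $W_s = h(x) = x e^{\sqrt{2}x}$: particles absorbed at $0$ contribute nothing since $h(0)=0$, and there is no jump in $W$ when a particle is absorbed at the right boundary because $X_v$ and $h$ are continuous (the term $h(X_v(r_v)) = h(L(r_v))$ simply transfers from the first sum to the second). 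Equivalently, one can invoke the Many-to-One Lemma already used in the paper, in its version for stopping lines, to write $\E_{s,x}[R_{s,u}] = \E_x[e^{T-s}\mathbf{1}_{\{T \le u,\, \xi_T = L(T)\}}]$, where $\xi$ is Brownian motion with drift $-\sqrt{2}$ started from $x$ at time $s$ and $T$ is its exit time from $\{0 < y < L(\cdot)\}$, and then observe that $M_w := e^{w-s}h(\xi_w)$ is a martingale up to $T$.

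The first step is to make this rigorous: the harmonicity identity for $h$ is checked by direct differentiation, and then one either verifies that $W$ is a local martingale via It\^o's formula plus the branching property, or applies optional stopping to $M_{w\wedge T}$ on $[s,u]$. The latter is legitimate since $0 \le \xi_{w\wedge T} \le L(0)$ forces $|M_{w\wedge T}| \le e^{u-s}L(0)e^{\sqrt{2}L(0)}$, so $M_{\cdot\wedge T}$ is bounded on $[s,u]$; optional stopping at $T\wedge u$ gives $\E_x[M_{T\wedge u}] = h(x)$, and discarding the nonnegative contribution of $\{T>u\}$ and using $h(0)=0$ on $\{\xi_T = 0\}$ leaves $\E_x[e^{T-s}h(L(T))\mathbf{1}_{\{T\le u,\,\xi_T = L(T)\}}] \le x e^{\sqrt{2}x}$. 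Either route yields
$$\E\Big[\sum_{v \text{ killed at the right by time } u} h\big(L(r_v)\big)\Big] \ \le\ x e^{\sqrt{2}x}.$$
The second and final step is a monotonicity argument: since $L$ is strictly decreasing on $[0,t)$ and $z \mapsto h(z) = z e^{\sqrt{2}z}$ is increasing, any particle killed at the right boundary at a time $r_v \le u$ has $h(L(r_v)) \ge L(u)e^{\sqrt{2}L(u)}$, so the left-hand side above is at least $L(u)e^{\sqrt{2}L(u)}\,\E_{s,x}[R_{s,u}]$; dividing through gives $\E_{s,x}[R_{s,u}] \le x e^{\sqrt{2}x}/(L(u)e^{\sqrt{2}L(u)}) = x e^{\sqrt{2}x}e^{-\sqrt{2}L(u)}/L(u)$, as claimed.

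The main obstacle is really just pinning down the correct test function $h(y) = y e^{\sqrt{2}y}$: it has to be simultaneously first-moment-harmonic (so that the associated sum over particles is a genuine martingale with no leftover exponential growth factor), vanish at the origin (so that particles absorbed at $0$ contribute nothing, which is what produces the factor $x$ rather than an $O(1)$ term), and be monotone increasing (so that its value on the decreasing right boundary is minimized at the terminal time $u$, producing exactly the factor $e^{-\sqrt{2}L(u)}/L(u)$). Once $h$ is in hand, everything else is a short optional-stopping or additive-martingale computation together with the monotonicity of $L$ and of $z\mapsto z e^{\sqrt 2 z}$; no estimates on the density $q$ or on Brownian bridges are needed.
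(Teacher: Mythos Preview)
Your proof is correct and takes essentially the same approach as the paper: both exploit that $h(y)=ye^{\sqrt{2}y}$ makes $\sum_i h(X_i(s))$ a martingale for the process with absorption at $0$, then use that $L$ is decreasing and $h$ is increasing to bound the contribution of particles hitting the right boundary from below by $L(u)e^{\sqrt{2}L(u)}R_{s,u}$. The only cosmetic difference is that the paper packages the monotonicity step into a supermartingale $M_s(u)=\sum_i h(X_i(u)) + h(L(u))R_{s,u}$ (letting killed particles slide down with the boundary), whereas you freeze killed particles at $h(L(r_v))$ to get a genuine martingale and apply monotonicity afterward; your alternative single-particle optional-stopping route via Many-to-One is not in the paper but is an equally valid way to reach the same inequality.
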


\begin{proof}
For branching Brownian motion with absorption only at the origin, if we define $$M(s) = \sum_{i=1}^{N(s)} X_i(s) e^{\sqrt{2} X_i(s)},$$ then it is well-known (see, for example, Lemma 2 of \cite{hh07}) that the process $(M(s), s \geq 0)$ is a martingale.  Now, for $u \in [s, t]$, let
\begin{equation}\label{Msudef}
M_s(u) = \sum_{i=1}^{N(u)} X_i(u) e^{\sqrt{2} X_i(u)} + L(u) e^{\sqrt{2} L(u)} R_{s,u}.
\end{equation}
We claim that the process $(M_s(u), s \leq u \leq t)$ is a supermartingale for branching Brownian motion with killing both at the origin and at the right boundary $L(\cdot).$  To see this, observe that because the process $(M(s), s \geq 0)$ is a martingale when there is no killing at the right boundary, this process would still be a martingale if particles were stopped, but not killed, upon reaching the right boundary.  Because the function $u \mapsto L(u)$ is decreasing and because $x \mapsto x e^{\sqrt{2}x}$ is increasing, 
the process becomes a supermartingale if particles, after hitting the right boundary, follow the right boundary until time $t$.  This is the process defined in (\ref{Msudef}) because there will be $R_{s,u}$ particles at $L(u)$ at time $u$.

Because the process defined in (\ref{Msudef}) is a supermartingale, we have $$xe^{\sqrt{2} x} = \E_{s,x}[M_s(s)] \geq \E_{s,x}[M_s(u)] \geq \E_{s,x}[L(u) e^{\sqrt{2} L(u)} R_{s,u}] = L(u) e^{\sqrt{2} L(u)} \E_{s,x}[R_{s,u}].$$  The result follows.
\end{proof}

\begin{Lemma}\label{sxuu1}
There is a constant $A > 0$ such that for all $s$, $u$, and $x$ such that $s \geq 0$, $0 < x < L(s)$, and $s + L(s)^2 \leq u \leq t - A$, we have $$\E_{s,x}[R_{u, u+1}] \asymp \frac{1}{L(u)^2} e^{-(3 \pi^2)^{1/3} (t-s)^{1/3}} \bigg( \frac{t-u}{t-s} \bigg)^{1/6} e^{\sqrt{2} x} \sin \bigg( \frac{\pi x}{L(s)} \bigg).$$
\end{Lemma}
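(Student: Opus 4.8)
The plan is to condition on the particle configuration at time $u$ and then reduce to a short-time first-passage estimate. By the Markov property at time $u$, and since $q_{s,u}(x,\cdot)$ is the mean density of particles at time $u$,
$$\E_{s,x}[R_{u,u+1}] = \int_0^{L(u)} q_{s,u}(x,z)\, g(z)\, dz, \qquad g(z) := \E_{u,z}[R_{u,u+1}].$$
The hypothesis $s+L(s)^2\le u\le t-A$ is exactly what is needed to apply Proposition~\ref{densityprop}, both its lower bound (with $r=s$) and its upper bound (with $b=1$), so that $q_{s,u}(x,z)\asymp\psi_{s,u}(x,z)$ for every $z\in(0,L(u))$, provided $A$ is at least the constant appearing there. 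Writing out $\psi_{s,u}(x,z)$ and pulling out the factor independent of $z$, the claim reduces to
$$\int_0^{L(u)} e^{-\sqrt 2 z}\sin\!\Big(\frac{\pi z}{L(u)}\Big)\,g(z)\,dz \;\asymp\; \frac{e^{-\sqrt 2 L(u)}}{L(u)}.$$

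To estimate $g(z)$, I would introduce a Brownian motion $(\xi_v)_{v\ge u}$ with drift $-\sqrt 2$ started from $z$ at time $u$, with $\sigma$ and $\sigma_0$ its first hitting times of the curve $v\mapsto L(v)$ and of $0$. For the lower bound, keep only the initial particle: it is absorbed at the right boundary during $[u,u+1]$ whenever it reaches $L(\cdot)$ before hitting $0$, before time $u+1$, and before its first branching time (an independent rate-one exponential), so $g(z)\ge e^{-1}\,\P_z(\sigma\le u+1,\ \sigma<\sigma_0)$. For the upper bound, apply the many-to-one lemma to the additive functional on the genealogy counting boundary absorptions during $[u,u+1]$, which gives $g(z)=\E[\,e^{\sigma-u}{\bf 1}_{\{\sigma\le u+1,\ \sigma<\sigma_0\}}\,]\le e\,\P_z(\sigma\le u+1)$. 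Since $|L'(v)|=O((t-u)^{-2/3})$ on $[u,u+1]$, after enlarging $A$ the boundary varies by at most $\tfrac12$ there, so with $W$ a standard Brownian motion
$$\P\!\Big(\sup_{0\le r\le 1}(W_r-\sqrt 2 r)\ge L(u)-z\Big)\;\le\;\P_z(\sigma\le u+1)\;\le\;\P\!\Big(\sup_{0\le r\le 1}(W_r-\sqrt 2 r)\ge L(u)-z-\tfrac12\Big),$$
while $\P_z(\sigma_0\le u+1)\le e^{-(z-\sqrt 2)^2/2}$ is negligible once $z$ is within $O(1)$ of $L(u)$.

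Next I would substitute $w=L(u)-z$. Using $\sin(\pi z/L(u))=\sin(\pi w/L(u))\le\pi w/L(u)$ for all $w\in[0,L(u)]$, the elementary identity $\P(\sup_{r\ge 0}(W_r-\sqrt 2 r)\ge w)=e^{-2\sqrt 2 w}$, and the display above, one gets $g(L(u)-w)\le C e^{-2\sqrt 2 w}$ for all $w\in[0,L(u)]$, hence
$$\int_0^{L(u)} e^{-\sqrt 2 z}\sin\!\Big(\frac{\pi z}{L(u)}\Big)\,g(z)\,dz\;\le\;\frac{\pi e^{-\sqrt 2 L(u)}}{L(u)}\int_0^{\infty} C w\,e^{-\sqrt 2 w}\,dw\;=\;\frac{C' e^{-\sqrt 2 L(u)}}{L(u)}.$$
For the matching lower bound, restrict the integral to $w\in[\tfrac12,1]$, where $e^{-\sqrt 2 z}\asymp e^{-\sqrt 2 L(u)}$, $\sin(\pi w/L(u))\asymp 1/L(u)$, and $g(L(u)-w)\ge e^{-1}\big(\P(\sup_{r\le 1}(W_r-\sqrt 2 r)\ge 1)-e^{-(L(u)-1-\sqrt 2)^2/2}\big)$ is bounded below by a positive constant once $A$ is large. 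This proves the displayed $\asymp$, and combining it with the first step gives
$$\E_{s,x}[R_{u,u+1}]\;\asymp\;\frac{1}{L(u)}\,e^{-(3\pi^2)^{1/3}\big((t-s)^{1/3}-(t-u)^{1/3}\big)}\Big(\frac{t-u}{t-s}\Big)^{1/6}e^{\sqrt 2 x}\sin\!\Big(\frac{\pi x}{L(s)}\Big)\cdot\frac{e^{-\sqrt 2 L(u)}}{L(u)}.$$
Finally, $c=\tau^{-1/3}$ with $\tau=2\sqrt 2/(3\pi^2)$ gives $(\sqrt 2\, c)^3=2\sqrt 2\, c^3=3\pi^2$, so $\sqrt 2\, L(u)=\sqrt 2\, c\,(t-u)^{1/3}=(3\pi^2)^{1/3}(t-u)^{1/3}$, and the two exponential factors merge into $e^{-(3\pi^2)^{1/3}(t-s)^{1/3}}$, which is exactly the asserted estimate.

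The step I expect to be the main obstacle is the two-sided control of $g(z)=\E_{u,z}[R_{u,u+1}]$. One must justify the many-to-one identity for a functional that counts boundary absorptions rather than particles alive at a fixed time, and extract from it the sharp decay $g(z)\asymp e^{-2\sqrt 2(L(u)-z)}$ near the boundary; the cruder estimate obtained from Lemma~\ref{ssLu2} only yields decay of order $e^{-\sqrt 2(L(u)-z)}$, which is too weak to make the integral above come out of the right order. One then has to estimate $\P_z(\sigma\le u+1,\ \sigma<\sigma_0)$ uniformly in $z\in(0,L(u))$, dealing with the slowly moving boundary and discarding the negligible chance of absorption at the origin. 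The remaining ingredients --- the conditioning at time $u$, the appeal to Proposition~\ref{densityprop}, and the evaluation of the resulting integral, which concentrates within $O(1)$ of the right boundary --- are routine.
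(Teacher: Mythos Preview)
Your proposal is correct and follows the same overall strategy as the paper: condition at time $u$, apply Proposition~\ref{densityprop} to $q_{s,u}$, and reduce to showing $\int_0^{L(u)} e^{-\sqrt 2 z}\sin(\pi z/L(u))\,g(z)\,dz \asymp e^{-\sqrt 2 L(u)}/L(u)$. The only notable difference is in the upper bound on $g(z)=\E_{u,z}[R_{u,u+1}]$. The paper discards the negative drift and applies the reflection principle on $[0,1]$ to get the Gaussian tail $g(y)\le C(L(u+1)-y)^{-1}e^{-(L(u+1)-y)^2/2}$; you instead keep the drift but remove the time restriction, using $\P(\sup_{r\ge 0}(W_r-\sqrt 2 r)\ge w)=e^{-2\sqrt 2 w}$ to obtain $g(z)\le Ce^{-2\sqrt 2(L(u)-z)}$. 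Both bounds are integrable against $e^{\sqrt 2 w}\cdot w/L(u)$ (with $w=L(u)-z$) and yield the right order; your version is arguably cleaner, avoiding the split of the integration range and the Gaussian tail inequality. Your stated concern about justifying many-to-one for boundary absorptions is not a real obstacle: the identity $g(z)=\E[e^{\sigma-u}{\bf 1}_{\{\sigma\le u+1,\ \sigma<\sigma_0\}}]$ is a routine Feynman--Kac computation, and the paper in fact uses only its coarser consequence $g(y)\le e\cdot\P(\text{one particle crosses }L(u+1))$ without further comment. The lower bound, restricting to $z$ within $O(1)$ of $L(u)$, is handled the same way in both arguments.
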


\begin{proof}
We adapt ideas from the proofs of Lemma 15 and Proposition 16 in \cite{bbs}.  By applying the Markov property at time $u$, we get
\begin{equation}\label{Marku}
\E_{s,x}[R_{u, u+1}] = \int_0^{L(u)} q_{s,u}(x,y) \E_{u,y}[R_{u, u+1}] \: dy.
\end{equation}
Let $(\xi_r)_{r \geq 0}$ be standard Brownian motion with $\xi_0 = 0$.  Because a particle at time $u$ will have on average $e$ descendants at time $u+1$ if no particles are killed, the expectation $\E_{u,y}[R_{u,u+1}]$ is bounded above by $e$ times the probability that a particle started from $y$ at time $u$ is to the right of $L(u+1)$ at some time before time $u+1$.  Therefore, it follows from the Reflection Principle and the inequality $$\int_z^{\infty} e^{-x^2/2} \: dx \leq z^{-1} e^{-z^2/2}$$ that if $y \leq L(u+1)$, then
\begin{align}
\E_{u,y}[R_{u, u+1}]  &\leq e \P\big( \max_{0 \leq r \leq 1} (\xi_r - \sqrt{2} r) \geq L(u+1) - y \big) \nonumber \\
&\leq 2e \P(\xi_1 \geq L(u+1) - y) \nonumber \\
&\leq \frac{C}{L(u+1) - y} e^{-(L(u+1) - y)^2/2}. \nonumber
\end{align}
Therefore, letting $\alpha = L(u) - L(u+1)$ and requiring $A$ to be large enough that $L(t-A+1) > 1$,
we have (using the change of variable $z=L(u)-y)$
\begin{align}\label{siny1}
&\int_0^{L(u) - \alpha - 1} e^{-\sqrt{2} y} \sin \bigg( \frac{\pi y}{L(u)} \bigg) \E_{u,y}[R_{u, u+1}] \: dy \nonumber \\
&\qquad \leq C \int_0^{L(u) - \alpha - 1} e^{-\sqrt{2} y} \sin \bigg( \frac{\pi y}{L(u)} \bigg) \frac{1}{L(u+1) - y} e^{-(L(u+1) - y)^2/2} \: dy \nonumber \\
&\qquad \leq C e^{-\sqrt{2} L(u)} \int_{\alpha + 1}^{L(u)} e^{\sqrt{2} z} \cdot \frac{\pi z}{L(u)} \cdot \frac{1}{z - \alpha} e^{-(z - \alpha)^2/2} \: dz \nonumber \\
&\qquad \leq \frac{C e^{-\sqrt{2} L(u)}}{L(u)}.
\end{align}
Using the bound $\E_{u,y}[R_{u, u+1}] \leq e$, we get
\begin{equation}\label{siny2}
\int_{L(u) - \alpha - 1}^{L(u)} e^{-\sqrt{2} y} \sin \bigg( \frac{\pi y}{L(u)} \bigg) \E_{u,y}[R_{u, u+1}] \: dy \leq \frac{C e^{-\sqrt{2} L(u)}}{L(u)}.
\end{equation}
Combining (\ref{siny1}) and (\ref{siny2}) with (\ref{Marku}) and Proposition \ref{densityprop}, and using the fact that $e^{-\sqrt{2} L(u)} = e^{-(3 \pi^2)^{1/3} (t - u)^{1/3}}$, we get, for $A$ large enough,
$$\E_{s,x}[R_{u, u+1}] \leq \frac{C''}{L(u)^2} e^{-(3 \pi^2)^{1/3}(t - s)^{1/3}} \bigg( \frac{t-u}{t-s} \bigg)^{1/6} e^{\sqrt{2} x} \sin \bigg( \frac{\pi x}{L(s)} \bigg),$$ which is the upper bound in the statement of the lemma.

Next, observe that for $y \in [L(u) - 1, L(u)]$, we have $$\E_{u,y}[R_{u, u+1}] \geq \P(\xi_1 - \sqrt{2} \geq L(u+1) - y) \geq \P(\xi_1 \geq 1 + \sqrt{2}) \geq C.$$  Thus, by (\ref{Marku}) and Proposition \ref{densityprop},
\begin{align}
\E_{s,x}[R_{u,u+1}] &\geq \frac{C'}{L(u)} e^{-(3 \pi^2)^{1/3}((t-s)^{1/3} - (t-u)^{1/3})} \bigg( \frac{t-u}{t-s} \bigg)^{1/6}  \nonumber \\
&\qquad \times e^{\sqrt{2} x} \sin \bigg( \frac{\pi x}{L(s)} \bigg) \int_{L(u) - 1}^{L(u)} e^{-\sqrt{2} y} \sin \bigg( \frac{\pi y}{L(u)} \bigg) \: dy \nonumber \\
&\geq  \frac{C'}{L(u)^2} e^{-(3 \pi^2)^{1/3}(t - s)^{1/3}} \bigg( \frac{t-u}{t-s} \bigg)^{1/6} e^{\sqrt{2} x} \sin \bigg( \frac{\pi x}{L(s)} \bigg), \nonumber
\end{align}
which gives the required lower bound.
\end{proof}

\begin{Lemma} \label{L:sumRj}
There is a constant $A_0 > 0$ and positive constants $C'$ and $C''$ such that if $0 \leq s \leq t - A_0$ and $0 < x < L(s)$, then
\begin{equation}\label{sumRj:ub}
C' h(s,x) \leq \E_{s,x}[R_{s,t}] \leq C''(h(s,x) + j(s,x)),
\end{equation}
where
\begin{equation}\label{hsdef}
h(s,x) = e^{\sqrt{2}x} \sin\left(\frac{\pi x}{L(s)}\right) (t-s)^{1/3} \exp(- (3\pi^2(t-s))^{1/3})
\end{equation}
and
$$j(s,x) = x e^{\sqrt{2}x} (t-s)^{-1/3} \exp(- (3\pi^2(t-s))^{1/3}).$$  Also, if $0 < \alpha < \beta < 1$, then
\begin{equation}\label{sumRj2}
C' h(s,x) \leq \E_{s,x}[R_{s + \alpha(t-s), s + \beta(t-s)}] \leq C'' h(s,x),
\end{equation}
where the constants $C'$ and $C''$ depend on $\alpha$ and $\beta$.
\end{Lemma}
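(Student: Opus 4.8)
The plan is to decompose $[s,t]$ into three subintervals and estimate the contribution of each. Set $u_1 = s + L(s)^2$ and $u_2 = t - A - 1$, where $A$ is the constant appearing in Lemma \ref{sxuu1} and Proposition \ref{densityprop}. By taking $A_0$ large (depending only on $c$ and $A$) we may assume $L(s)^2 \le (t-s)/2$ and $s < u_1 < u_2 < t$, so that $R_{s,t} \le R_{s,u_1} + R_{u_1,u_2} + R_{u_2,t}$ while also $R_{s,t} \ge R_{u_1,u_2}$. It therefore suffices to show $\E_{s,x}[R_{u_1,u_2}] \asymp h(s,x)$, $\E_{s,x}[R_{s,u_1}] \le C j(s,x)$, and $\E_{s,x}[R_{u_2,t}] \le C h(s,x)$.

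For the middle term, every unit interval $[u_1+j, u_1+j+1]$ with $j$ a nonnegative integer and $u_1+j \le u_2 - 1$ is contained in $[s + L(s)^2, t-A]$, so Lemma \ref{sxuu1} applies to each $\E_{s,x}[R_{u_1+j,u_1+j+1}]$. Since these intervals are disjoint and cover a range whose length is comparable to $t-s$, $\E_{s,x}[R_{u_1,u_2}]$ lies between $\sum_j \E_{s,x}[R_{u_1+j,u_1+j+1}]$ and that sum extended by one more unit interval. Substituting $L(u_1+j)^2 = c^2(t-u_1-j)^{2/3}$, the bound from Lemma \ref{sxuu1} for the $j$-th summand is a constant times $(t-s)^{-1/6}(t-u_1-j)^{-1/2} e^{-(3\pi^2(t-s))^{1/3}}e^{\sqrt 2 x}\sin(\pi x/L(s))$, and comparing $\sum_j (t-u_1-j)^{-1/2}$ with $\int v^{-1/2}\,dv$ shows it is $\asymp \sqrt{t-u_1} \asymp \sqrt{t-s}$, using $(t-s)/2 \le t-u_1 \le t-s$. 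Since $-1/6 + 1/2 = 1/3$, this gives $\E_{s,x}[R_{u_1,u_2}] \asymp h(s,x)$, which in particular yields the lower bound in \eqref{sumRj:ub}.

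For the first term, Lemma \ref{ssLu2} with $u = u_1$ gives $\E_{s,x}[R_{s,u_1}] \le x e^{\sqrt 2 x} e^{-\sqrt 2 L(u_1)}/L(u_1)$. As $(t-s)/2 \le t-u_1 \le t-s$ we have $L(u_1) \asymp (t-s)^{1/3}$, and since $(t-s)^{1/3} - (t-u_1)^{1/3}$ is bounded (it converges to $c^2/3$) we get $e^{-\sqrt 2 L(u_1)} = e^{-(3\pi^2(t-u_1))^{1/3}} \le C e^{-(3\pi^2(t-s))^{1/3}}$; hence $\E_{s,x}[R_{s,u_1}] \le C j(s,x)$. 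For the last term, the Markov property at time $u_2$ gives $\E_{s,x}[R_{u_2,t}] = \E_{s,x}[\sum_i \E_{u_2,X_i(u_2)}[R_{u_2,t}]]$, the sum being over the particles alive at time $u_2$. Since $R_{u_2,t}$ is bounded by one plus the number of branching events in $[u_2,t]$ and $t - u_2 = A+1$ is a fixed constant, a crude first-moment bound gives $\E_{u_2,y}[R_{u_2,t}] \le C$ uniformly in $y$, so $\E_{s,x}[R_{u_2,t}] \le C \E_{s,x}[N(u_2)] = C \int_0^{L(u_2)} q_{s,u_2}(x,y)\,dy$. The upper bound of Proposition \ref{densityprop} with $b = 1$ (whose hypothesis $s + L(s)^2 \le u_2 \le t-A$ holds) bounds this by $C\int_0^{L(u_2)}\psi_{s,u_2}(x,y)\,dy$, and since $t - u_2$ and $L(u_2)$ are bounded positive constants the integral collapses to $C(t-s)^{-1/6}e^{-(3\pi^2(t-s))^{1/3}}e^{\sqrt 2 x}\sin(\pi x/L(s)) \le C h(s,x)$. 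Adding the three pieces gives the upper bound in \eqref{sumRj:ub}.

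Finally, \eqref{sumRj2} follows from the same summation argument as for the middle term: for $A_0$ large enough depending on $\alpha$ and $\beta$, the interval $[s + \alpha(t-s),\, s + \beta(t-s)]$ is contained in $[s + L(s)^2,\, t - A]$, and throughout this range $t - u \asymp t - s$ and $L(u)^2 \asymp (t-s)^{2/3}$ with constants depending only on $\alpha,\beta$, while the number of unit subintervals is $\asymp t-s$; summing the bound of Lemma \ref{sxuu1} then yields $\E_{s,x}[R_{s+\alpha(t-s),\,s+\beta(t-s)}] \asymp h(s,x)$ directly. The main obstacle is the bookkeeping in the three-way split: one must choose $u_1$ and $u_2$ so that they simultaneously satisfy the hypotheses of Lemmas \ref{ssLu2} and \ref{sxuu1} and of Proposition \ref{densityprop}, and check that the boundary-layer contributions near $s$ (which produce the term $j(s,x)$) and near $t$ (which get absorbed into $h(s,x)$) are genuinely of the claimed order; once the split is in place, the summation $\sum_j (t-u_1-j)^{-1/2} \asymp \sqrt{t-s}$ that upgrades the per-interval estimate to the power $(t-s)^{1/3}$ is routine.
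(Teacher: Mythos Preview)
Your proof is correct and follows essentially the same approach as the paper: the same three-way split at $s+L(s)^2$ and $t-A$, with Lemma~\ref{ssLu2} handling the first piece (giving $j(s,x)$), Lemma~\ref{sxuu1} summed over unit intervals handling the middle piece (giving $h(s,x)$ via $\sum (t-u)^{-1/2}\asymp (t-s)^{1/2}$), and Proposition~\ref{densityprop} bounding the expected number of particles near time $t-A$ to control the last piece. The only cosmetic difference is that the paper takes the right cut at $t-A$ rather than $t-A-1$ and phrases the middle sum as an integral; your remark that $A_0$ for \eqref{sumRj2} must be taken large depending on $\alpha,\beta$ is in fact a point the paper leaves implicit.
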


\begin{proof}
If $u = s + L(s)^2$, then for sufficiently large $A_0$, $$L(s) - L(u) \leq -L'(u)(u - s) = \frac{c^3}{3} \bigg( \frac{t-s}{t-u} \bigg)^{2/3} \leq C.$$  Therefore, by Lemma \ref{ssLu2}, using that $\sqrt{2} L(s) = (3 \pi^2(t-s))^{1/3}$,
\begin{equation}\label{EsxR1}
0 \leq \E_{s,x}[R_{s, s + L(s)^2}] \leq \frac{C x e^{\sqrt{2} x} e^{-\sqrt{2} L(s)}}{L(s)} \leq C j(s,x).
\end{equation}
We may choose $A_0$ to be large enough that $s + L(s)^2 \leq t - A - 1$ whenever $0 \leq s \leq t - A_0$, where $A$ is the constant from Lemma \ref{sxuu1}.  By Lemma \ref{sxuu1},
\begin{align}\label{EsxR2}
\E_{s,x}[R_{s+L(s)^2, t-A}] &\asymp \frac{e^{-(3 \pi^2)^{1/3}(t-s)^{1/3}}}{(t-s)^{1/6}} e^{\sqrt{2} x} \sin \bigg(\frac{\pi x}{L(s)} \bigg) \int_{s + L(s)^2}^{t-A} \frac{(t-u)^{1/6}}{L(u)^2} \: du \nonumber \\
&\asymp \frac{e^{-(3 \pi^2)^{1/3}(t-s)^{1/3}}}{(t-s)^{1/6}} e^{\sqrt{2} x} \sin \bigg(\frac{\pi x}{L(s)} \bigg) \int_{s + L(s)^2}^{t - A} \frac{1}{(t-u)^{1/2}} \: du \nonumber \\
&\asymp h(s,x).
\end{align}
Because particles branch at rate one, $\E_{s,x}[R_{t-A, t}]$ is at most $e^A$ times the expected number of particles between $0$ and $L(t-A)$ at time $t-A$.  Therefore, by Proposition \ref{densityprop},
\begin{align} \label{EsxR3}
\E_{s,x}[R_{t-A, t}] &\leq e^A \int_0^{L(t-A)} q_{s,t-A}(x,y) \: dy \nonumber \\
&\leq \frac{C e^{-(3 \pi^2)^{1/3}(t-s)^{1/3}}}{(t-s)^{1/6}} e^{\sqrt{2} x} \sin \bigg( \frac{\pi x}{L(s)} \bigg) \int_0^{L(t-A)} e^{-\sqrt{2} y} \sin \bigg( \frac{\pi y}{L(s)} \bigg) \: dy \nonumber \\
&\leq \frac{C h(s,x)}{(t-s)^{5/6}}.
\end{align}
The result (\ref{sumRj:ub}) follows from (\ref{EsxR1}), (\ref{EsxR2}), and (\ref{EsxR3}).  The result (\ref{sumRj2}) follows from the reasoning in (\ref{EsxR2}), using $s + \alpha(t-s)$ and $s + \beta(t-s)$ as the limits of integration.
\end{proof}

\begin{Lemma} \label{P:R2}
Let $0 < \alpha < \beta < 1$.  Let $A_0$ be the constant defined in Lemma \ref{L:sumRj}.  Then there exist positive constants $C'$ and $C''$ depending on $\alpha$ and $\beta$ such that if $t \geq A_0$ and $0 < x < L(0) - 1$, then $$\E_{0,x}[R_{\alpha t, \beta t}^2] \leq C h(0,x).$$
\end{Lemma}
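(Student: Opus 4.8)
Throughout write $R := R_{\alpha t,\beta t}$. The plan is to compute $\E_{0,x}[R^2]$ by a many-to-two (spine) decomposition. Since $R^2 = R + (\text{number of ordered pairs of distinct particles, each killed at the right boundary during }[\alpha t,\beta t])$, and every such pair has a unique most recent common ancestor --- a branch event, say at time $v$ with the branching particle at position $z$ --- and since conditionally on $\mathcal F_v$ the two subtrees emanating from that event are i.i.d.\ copies of the process started from a single particle at $z$ at time $v$, the rate-$1$ branching gives, for the intensity of branch events, $q_{0,v}(x,z)\,dz\,dv$, and at each such event the conditional expectation of the number of ordered pairs whose two members are both killed at the right boundary during $[\alpha t,\beta t]$ equals $2\,(\E_{v,z}[R_{(\alpha t)\vee v,\,\beta t}])^2$ (a product of first moments). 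Hence
$$\E_{0,x}[R_{\alpha t,\beta t}^2] = \E_{0,x}[R_{\alpha t,\beta t}] + 2\int_0^{\beta t}\!\!\int_0^{L(v)} q_{0,v}(x,z)\,\big(\E_{v,z}[R_{(\alpha t)\vee v,\,\beta t}]\big)^2\,dz\,dv.$$
By Lemma \ref{L:sumRj}, equation (\ref{sumRj2}), the first term is at most $Ch(0,x)$, so the task is to bound the double integral by $Ch(0,x)$. We may assume $t$ is large enough that $\beta t\le t-A$ (with $A$ from Proposition \ref{densityprop}), $\beta t\le t-A_0$, and $\alpha t>2L(0)^2$; the remaining bounded range of $t$ is routine, using that on it $h(0,x)$ is bounded below by a positive constant when $L(0)-x=O(1)$, and $x\asymp h(0,x)$ when $x$ is small.

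I split the $v$-integral at $v=L(0)^2$. On $[L(0)^2,\beta t]$ I bound $\E_{v,z}[R_{(\alpha t)\vee v,\,\beta t}]\le \E_{v,z}[R_{v,t}]\le C(h(v,z)+j(v,z))$ by (\ref{sumRj:ub}) and $q_{0,v}(x,z)\le C\psi_{0,v}(x,z)$ by Proposition \ref{densityprop} (with the choice $b=1$ there). After squaring, the inner $z$-integral reduces, via the change of variables $w=L(v)-z$, to integrals $\int_0^{L(v)} e^{\sqrt2 z}\sin^k(\pi z/L(v))\,dz\asymp e^{\sqrt2 L(v)}L(v)^{-k}$ for $k\in\{1,3\}$; collecting prefactors and using $\sqrt2 L(v)=(3\pi^2(t-v))^{1/3}$ and $L(v)=c(t-v)^{1/3}$, together with the telescoping of the exponentials built into $\psi$, I expect
$$\int_0^{L(v)} q_{0,v}(x,z)\big(h(v,z)^2+j(v,z)^2\big)\,dz \le \frac{C}{(t-v)^{2/3}}\,e^{-(3\pi^2 t)^{1/3}}\,e^{\sqrt2 x}\sin\!\Big(\frac{\pi x}{L(0)}\Big).$$
Since $\int_{L(0)^2}^{\beta t}(t-v)^{-2/3}\,dv\le 3(t-L(0)^2)^{1/3}\le Ct^{1/3}$, this part of the double integral is at most $Ct^{1/3}e^{-(3\pi^2 t)^{1/3}}e^{\sqrt2 x}\sin(\pi x/L(0))=Ch(0,x)$.

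On $[0,L(0)^2]$ Proposition \ref{densityprop} is unavailable, so I argue differently. Here $\alpha t>2L(0)^2\ge v+L(v)^2$, so $R_{(\alpha t)\vee v,\,\beta t}=R_{\alpha t,\beta t}\le R_{v+L(v)^2,\,\beta t}$, and (\ref{EsxR2}) in the proof of Lemma \ref{L:sumRj} (applied with $s=v$, $x=z$, and the upper limit $\beta t\le t-A$) gives $\E_{v,z}[R_{v+L(v)^2,\,\beta t}]\le Ch(v,z)$, with no $j$-term. For $v\le L(0)^2$ we have $L(v)\asymp L(0)=:L_0$, $e^{-\sqrt2 L(v)}\le Ce^{-(3\pi^2 t)^{1/3}}$, and $\sin(\pi z/L(v))\le C\sin(\pi z/L_0)$ for $z\in(0,L(v))$, so $h(v,z)^2\le Ct^{2/3}e^{-2\sqrt2 L_0}e^{2\sqrt2 z}\sin^2(\pi z/L_0)$. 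Using $q_{0,v}(x,z)\le{\bar q}_v(x,z)$, where ${\bar q}_v$ is the density for the constant strip $(0,L_0)$ (legitimate since $L(\cdot)$ is nonincreasing), and integrating in $v$ first via Lemma \ref{mainqslem}, $\int_0^{L_0^2} q_{0,v}(x,z)\,dv\le\int_0^\infty{\bar q}_v(x,z)\,dv\le \frac{2e^{\sqrt2(x-z)}x(L_0-z)}{L_0}$, this piece becomes at most $Ct^{2/3}e^{-2\sqrt2 L_0}\frac{xe^{\sqrt2 x}}{L_0}\int_0^{L_0}(L_0-z)e^{\sqrt2 z}\sin^2(\pi z/L_0)\,dz$; the last integral is $\asymp e^{\sqrt2 L_0}/L_0^2$ (again $w=L_0-z$, keeping the $\sin^2$), so this piece is $\le C\,x\,e^{\sqrt2 x}e^{-(3\pi^2 t)^{1/3}}/t^{1/3}$. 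Finally, the elementary inequality $x\le C\sin(\pi x/L_0)\,t^{2/3}$ for $0<x<L_0-1$ (check $x\le L_0/2$ and $x>L_0/2$ separately, using $\sin(\pi x/L_0)\ge \frac2{L_0}\min(x,L_0-x)$ and $L_0\asymp t^{1/3}$) turns this into $\le Ch(0,x)$. Adding the three contributions gives $\E_{0,x}[R_{\alpha t,\beta t}^2]\le Ch(0,x)$.

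The step I expect to be the main obstacle is getting the inner $z$-integral exactly right across the whole range of $v$: on $[L(0)^2,\beta t]$ the decay must be precisely $(t-v)^{-2/3}$, since anything slower would make $\int dv$ exceed the required $O(t^{1/3})$; on $[0,L(0)^2]$, where one must fall back on the Green's-function bound of Lemma \ref{mainqslem} after comparison to a constant strip, one has to retain the $\sin^2(\pi z/L_0)$ factor coming from $h(v,z)^2$ (it supplies the crucial extra $L_0^{-2}\asymp t^{-2/3}$) and then verify $x\le C\sin(\pi x/L_0)t^{2/3}$, which is what makes the small-$v$ contribution comparable to $h(0,x)$ rather than larger. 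Tracking the exponential prefactors so that they collapse to exactly $e^{-(3\pi^2 t)^{1/3}}$, via $\sqrt2 L(v)=(3\pi^2(t-v))^{1/3}$ and the definition of $\psi$, is the other place where care is needed.
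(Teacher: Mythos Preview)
Your proof is correct and follows essentially the same approach as the paper: the many-to-two decomposition, the split of the time integral at $L(0)^2$, the use of Proposition~\ref{densityprop} together with Lemma~\ref{L:sumRj} on $[L(0)^2,\beta t]$, and the Green's-function bound of Lemma~\ref{mainqslem} via comparison to the constant strip on $[0,L(0)^2]$, finishing with the inequality $x\le C t^{2/3}\sin(\pi x/L(0))$. The one small difference is that on $[0,L(0)^2]$ you exploit $\alpha t>v+L(v)^2$ to invoke (\ref{EsxR2}) directly and thereby avoid the $j$-term, whereas the paper carries both $h(s,y)^2$ and $j(s,y)^2$ through that region; this is a modest simplification but not a different method.
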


\begin{proof}
The proof is similar to the proof of Proposition 18 in \cite{bbs}.  Throughout this proof, we write $R = R_{\alpha t, \beta t}$.  Note that $R^2 = R + 2Y$, where $Y$ is the number of distinct pairs of particles that reach $L(s)$ for some $s \in [\alpha t, \beta t]$.  A branching event at the location $y$ at time $s$ produces, on average, $(\E_{s,y}[R])^2$ pairs of particles that reach the right boundary and have their most recent common ancestor at time $s$.  Therefore, by Lemma \ref{L:sumRj}, we may write
\begin{align} \label{hsx0}
\E_{0,x}[R^2] &= \E_{0,x}[R] + 2 \int_0^{\beta t} \int_0^{L(s)} q_{0,s}(x,y) (\E_{s,y}[R])^2 \: dy \: ds \nonumber \\
& \leq \E_{0,x}[R] + C \int_0^{\beta t} \int_0^{L(s)} q_{0,s}(x,y)(h(s,y)^2 + j(s,y)^2) \: dy \: ds.
\end{align}
We bound separately the term involving $h(s,y)^2$ and the term involving $j(s,y)^2$.  We also treat separately the cases $s \leq L(0)^2$ and $s \geq L(0)^2$.

By Proposition \ref{densityprop} and (\ref{hsdef}),
\begin{align} \label{hsx1}
& \int_{L(0)^2}^{\beta t} \int_0^{L(s)} q_{0,s}(x,y) h(s,y)^2 \: dy \: ds \nonumber \\
& \qquad \le C e^{-(3 \pi^2)^{1/3}t^{1/3}} e^{\sqrt{2}x} \sin \bigg(\frac{\pi x}{L(0)} \bigg) \int_{L(0)^2}^{\beta t} \int_0^{L(s)} \frac{1}{L(s)} \bigg( \frac{t-s}{t} \bigg)^{1/6} e^{(3 \pi^2)^{1/3}(t-s)^{1/3}}  \nonumber \\
&\qquad \qquad \times e^{-\sqrt{2} y} \sin\left(\frac{\pi y}{L(s)}\right) \left\{(t-s)^{1/3} e^{-(3 \pi^2)^{1/3}(t-s)^{1/3}} e^{\sqrt{2}y} \sin\left(\frac{\pi y}{L(s)}\right)\right\}^2 \: dy \: ds \nonumber \\
& \qquad \le \frac{C e^{-(3 \pi^2)^{1/3}t^{1/3}}}{t^{1/6}} e^{\sqrt{2}x} \sin \bigg(\frac{\pi x}{L(0)}\bigg) \int_{L(0)^2}^{\beta t} e^{-(3 \pi^2)^{1/3}(t-s)^{1/3}} (t - s)^{1/2} \nonumber \\
&\qquad \qquad \times \int_0^{L(s)} e^{\sqrt{2}y} \sin\bigg(\frac{\pi y}{L(s)}\bigg)^3 \: dy \: ds \nonumber \\
& \qquad \le \frac{C e^{-(3 \pi^2)^{1/3}t^{1/3}}}{t^{1/6}} e^{\sqrt{2}x} \sin \bigg(\frac{\pi x}{L(0)}\bigg) \int_{L(0)^2}^{\beta t} e^{-(3 \pi^2)^{1/3}(t - s)^{1/3}} (t - s)^{1/2} \frac{e^{\sqrt{2} L(s)}}{L(s)^3} \: ds \nonumber \\
& \qquad \le \frac{C e^{-(3 \pi^2)^{1/3}t^{1/3}}}{t^{1/6}} e^{\sqrt{2}x} \sin \bigg(\frac{\pi x}{L(0)}\bigg) \int_{L(0)^2}^{\beta t} \frac{1}{(t - s)^{1/2}} \: ds \nonumber \\
& \qquad \leq C h(0,x).
\end{align}
A similar computation gives
\begin{align}\label{hsx2}
& \int_{L(0)^2}^{\beta t} \int_0^{L(s)} q_{0,s}(x,y) j(s,y)^2 \: dy \: ds \nonumber \\
& \qquad \le C e^{-(3 \pi^2)^{1/3}t^{1/3}} e^{\sqrt{2}x} \sin \bigg(\frac{\pi x}{L(0)} \bigg) \int_{L(0)^2}^{\beta t} \int_0^{L(s)} \frac{1}{L(s)} \bigg( \frac{t-s}{t} \bigg)^{1/6} e^{(3 \pi^2)^{1/3}(t - s)^{1/3}}  \nonumber \\
&\qquad \qquad \times e^{-\sqrt{2} y} \sin\left(\frac{\pi y}{L(s)}\right) \left\{(t-s)^{-1/3} e^{-(3 \pi^2)^{1/3}(t-s)^{1/3}} y e^{\sqrt{2}y} \right\}^2 \: dy \: ds \nonumber \\
&\qquad \le \frac{C e^{-(3 \pi^2)^{1/3}t^{1/3}}}{t^{1/6}} e^{\sqrt{2}x} \sin \bigg(\frac{\pi x}{L(0)}\bigg) \int_{L(0)^2}^{\beta t} e^{-(3 \pi^2)^{1/3}(t-s)^{1/3}} \frac{1}{(t-s)^{5/6}} \nonumber \\
&\qquad \qquad \times \int_0^{L(s)} e^{\sqrt{2} y} y^2 \sin \bigg( \frac{\pi y}{L(s)} \bigg) \: ds \nonumber \\
&\qquad \le  \frac{C e^{-(3 \pi^2)^{1/3}t^{1/3}}}{t^{1/6}} e^{\sqrt{2}x} \sin \bigg(\frac{\pi x}{L(0)}\bigg) \int_{L(0)^2}^{\beta t} \frac{1}{(t - s)^{1/2}} \: ds \nonumber \\
& \qquad \leq C h(0,x).
\end{align}

It remains to bound from above the two integrals between $0$ and $L(0)^2$. If $0 \leq s \leq L(0)^2$, then $t^{1/3} - (t-s)^{1/3} \leq C$, and $\sin(\pi y/L(s)) \leq C \sin (\pi y/L(0))$ for all $y \in [0, L(s)]$.  Also, because $q_{0,s}(x,y)$ is bounded above by the density that would be obtained if particles were killed at $L(0)$, rather than $L(r)$, for $r \in [0,s]$, Lemma \ref{mainqslem} implies that $$\int_0^{L(0)^2} q_{0,s}(x,y) \: ds \leq \frac{2 e^{\sqrt{2}(x-y)} x (L(0) - y)}{L(0)}.$$  Thus
\begin{align}
&\int_0^{L(0)^2} \int_0^{L(s)} q_{0,s}(x,y) h(s,y)^2 \: dy \: ds \nonumber \\
&\qquad \leq C \int_0^{L(0)^2} \int_0^{L(s)} q_{0,s}(x,y) \bigg\{ e^{\sqrt{2} y} \sin \bigg( \frac{\pi y}{L(s)} \bigg) (t - s)^{1/3} e^{-(3 \pi^2)^{1/3}(t-s)^{1/3}} \bigg\}^2 \: dy \: ds \nonumber \\
&\qquad \leq C e^{-2(3 \pi^2)^{1/3} t^{1/3}} t^{2/3} \int_0^{L(0)} e^{2 \sqrt{2} y} \sin \bigg( \frac{\pi y}{L(0)} \bigg)^2 \bigg( \int_0^{L(0)^2} q_{0,s}(x,y) \: ds \bigg) \: dy \nonumber \\
&\qquad \leq C x e^{\sqrt{2} x} e^{-2(3 \pi^2)^{1/3} t^{1/3}} t^{2/3} \int_0^{L(0)} e^{\sqrt{2} y} \sin \bigg( \frac{\pi y}{L(0)} \bigg)^2 \frac{L(0) - y}{L(0)} \: dy \nonumber \\
&\qquad \leq C x e^{\sqrt{2} x} e^{-2(3 \pi^2)^{1/3} t^{1/3}} t^{2/3} \cdot \frac{e^{\sqrt{2} L(0)}}{L(0)^3} \nonumber \\
&\qquad \leq C x e^{\sqrt{2} x} e^{-(3 \pi^2)^{1/3} t^{1/3}} t^{-1/3}. \nonumber
\end{align}
Because
\begin{equation}\label{585}
x t^{-1/3} \leq C t^{1/3} \sin(\pi x/L(0))
\end{equation}
when $0 < x < L(0) - 1$, it follows that
\begin{equation}\label{hsx3}
\int_0^{L(0)^2} \int_0^{L(s)} q_{0,s}(x,y) h(s,y)^2 \: dy \: ds \leq C h(0,x).
\end{equation}
Likewise, using that $y (t-s)^{-1/3} \leq C$ for $y \leq L(s)$, we get
\begin{align}\label{hsx4}
\int_0^{L(0)^2} \int_0^{L(s)} q_{0,s}(x,y) j(s,y)^2 \: dy \: ds
&\leq C \int_0^{L(0)^2} \int_0^{L(s)} q_{0,s}(x,y) \bigg\{ e^{\sqrt{2} y} e^{-(3 \pi^2)^{1/3}(t-s)^{1/3}} \bigg\}^2 \: dy \: ds \nonumber \\
&\leq C e^{-2(3 \pi^2)^{1/3} t^{1/3}} \int_0^{L(0)} e^{2 \sqrt{2} y} \bigg( \int_0^{L(0)^2} q_{0,s}(x,y) \: ds \bigg) \: dy \nonumber \\
&\leq C x e^{\sqrt{2} x} e^{-2(3 \pi^2)^{1/3} t^{1/3}} \int_0^{L(0)} e^{\sqrt{2} y} \cdot \frac{L(0) - y}{L(0)} \: dy \nonumber \\
&\leq C x e^{\sqrt{2} x} e^{-(3 \pi^2)^{1/3} t^{1/3}} t^{-1/3}. \nonumber \\
&\leq C h(0,x).
\end{align}
The result follows from (\ref{hsx0}), (\ref{hsx1}), (\ref{hsx2}), (\ref{hsx3}), (\ref{hsx4}), and Lemma \ref{L:sumRj}.
\end{proof}

\begin{Cor}\label{hitcor}
Let $A_0$ be the constant defined in Lemma \ref{L:sumRj}.  If there is a single particle at $x$ at time zero, where $0 < x < L(0) - 1$, then for $t \geq A_0$,
$$\P(R_{0,t} > 0) \asymp e^{\sqrt{2} x} \sin \bigg( \frac{\pi x}{L(0)} \bigg) t^{1/3} \exp(-(3\pi^2t)^{1/3}).$$
Likewise, if $0 < \alpha < \beta < 1$, then there are positive constants $C'_{\alpha, \beta}$ and $C''_{\alpha, \beta}$, depending on $\alpha$ and $\beta$ such that for all $t \geq A_0$,
\begin{align}
&C'_{\alpha, \beta} e^{\sqrt{2} x} \sin \bigg( \frac{\pi x}{L(0)} \bigg) t^{1/3} \exp(-(3\pi^2t)^{1/3}) \nonumber \\
&\qquad \leq \P(R_{\alpha t, \beta t} > 0) \leq C''_{\alpha, \beta} e^{\sqrt{2} x} \sin \bigg( \frac{\pi x}{L(0)} \bigg) t^{1/3} \exp(-(3\pi^2t)^{1/3}). \nonumber
\end{align}
\end{Cor}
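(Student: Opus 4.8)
The plan is to combine the first and second moment estimates already established in Lemmas \ref{L:sumRj} and \ref{P:R2}. Note first that with $s=0$ and $L(0)=ct^{1/3}$ the function $h(0,x)$ from \eqref{hsdef} is exactly $e^{\sqrt 2 x}\sin(\pi x/L(0))\, t^{1/3}e^{-(3\pi^2 t)^{1/3}}$, so both displayed quantities in the corollary are $\asymp h(0,x)$, and it suffices to show $\P(R_{0,t}>0)\asymp h(0,x)$ and $\P(R_{\alpha t,\beta t}>0)\asymp h(0,x)$.

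The upper bounds are immediate from Markov's inequality $\P(R_{s,u}>0)\le \E_{0,x}[R_{s,u}]$. For $R_{0,t}$, Lemma \ref{L:sumRj} gives $\E_{0,x}[R_{0,t}]\le C''(h(0,x)+j(0,x))$, and since $0<x<L(0)-1$ we have $j(0,x)=x e^{\sqrt 2 x}t^{-1/3}e^{-(3\pi^2 t)^{1/3}}\le C h(0,x)$ by \eqref{585}, so $\E_{0,x}[R_{0,t}]\le C h(0,x)$. For $R_{\alpha t,\beta t}$, the bound $\E_{0,x}[R_{\alpha t,\beta t}]\le C'' h(0,x)$ is exactly \eqref{sumRj2}, with the constant depending on $\alpha,\beta$.

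For the lower bounds I would first treat $R:=R_{\alpha t,\beta t}$ for fixed $0<\alpha<\beta<1$. The Cauchy--Schwarz inequality (second moment method) gives $\P(R>0)\ge (\E_{0,x}[R])^2/\E_{0,x}[R^2]$, which is legitimate because $\E_{0,x}[R]\ge C' h(0,x)>0$ by \eqref{sumRj2} (using $0<x<L(0)$, so $\sin(\pi x/L(0))>0$). Lemma \ref{P:R2} gives $\E_{0,x}[R^2]\le C h(0,x)$, and dividing yields $\P(R_{\alpha t,\beta t}>0)\ge ((C')^2/C)\, h(0,x)$, with constants depending on $\alpha,\beta$. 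Finally, since every particle killed at the right boundary during $[\alpha t,\beta t]$ is in particular killed at the right boundary during $[0,t]$, we have $R_{0,t}\ge R_{\alpha t,\beta t}$ pathwise, so $\P(R_{0,t}>0)\ge\P(R_{1/3\cdot t,\,2/3\cdot t}>0)\ge C h(0,x)$. Combining with the upper bounds proves both assertions.

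There is no genuine obstacle here: the heavy lifting is done by the moment estimates in Lemmas \ref{L:sumRj} and \ref{P:R2}. The only point requiring a word of care is that the second moment lower bound needs $\E_{0,x}[R_{\alpha t,\beta t}]>0$, which holds precisely because $h(0,x)>0$ in the stated parameter range.
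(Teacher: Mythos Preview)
Your proof is correct and follows essentially the same approach as the paper: Markov's inequality together with Lemma \ref{L:sumRj} (using \eqref{585} to absorb $j(0,x)$ into $h(0,x)$) for the upper bounds, and the second moment method via Lemmas \ref{L:sumRj} and \ref{P:R2} for the lower bound on $\P(R_{\alpha t,\beta t}>0)$, then the monotonicity $R_{0,t}\ge R_{\alpha t,\beta t}$ to transfer this to $R_{0,t}$. The only cosmetic difference is that the paper bounds $\P(R_{\alpha t,\beta t}>0)$ above via $\P(R_{0,t}>0)\le\E[R_{0,t}]$ rather than invoking \eqref{sumRj2} directly.
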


\begin{proof}
Note that $j(0,x) \leq C h(0,x)$ when $x < L(0) - 1$ by (\ref{585}).  Therefore, by Lemma \ref{L:sumRj} with $s = 0$ and Markov's Inequality, $$\P(R_{\alpha t, \beta t} > 0) \leq \P(R_{0,t} > 0) \leq \E[R_{0,t}] \leq C(h(0,x) + j(0,x)) \leq C h(0,x).$$   For the lower bound, we use a standard second moment argument and apply Lemmas \ref{L:sumRj} and \ref{P:R2} to get
$$\P(R_{0,t} > 0) \geq \P(R_{\alpha t, \beta t} > 0) \geq \frac{(\E_{0,x}[R_{\alpha t, \beta t}])^2}{\E_{0,x}[R_{\alpha t, \beta t}^2]} \geq \frac{C h(0,x)^2}{h(0,x)} = C h(0,x).$$  The result follows.
\end{proof}

\section{Proofs of main results}\label{yagsec}

In this section, we prove Theorem \ref{yaglom} and Theorem \ref{extinct}.  The key to these proofs is Proposition \ref{survivefromL} below.  We first recall the following result due to Neveu \cite{nev87}.

\begin{Lemma}\label{nevlem}
Consider branching Brownian motion with drift $-\sqrt{2}$ and no absorption, started with a single particle at the origin.  For each $y \geq 0$, let $K(y)$ be the number of particles that reach $-y$ in a modified process in which particles are killed upon reaching $-y$.  Then there exists a random variable $W$, with $\P(0 < W < \infty) = 1$ and $\E[W] = \infty$, such that $$\lim_{y \rightarrow \infty} y e^{-\sqrt{2} y} K(y) = W \hspace{.1in}a.s.$$
\end{Lemma}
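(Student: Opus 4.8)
The plan is to exploit the branching structure of BBM \emph{in the spatial direction}. First I would show that $(K(y))_{y \ge 0}$ is a continuous-time Galton--Watson (Markov branching) process. Indeed, running the process with absorption at $-y$, the strong branching property says that each of the $K(y)$ particles killed at level $-y$ independently generates, through its descendants, an exact copy of the original structure shifted down by $y$; hence for any $z \ge 0$,
$$K(y+z) = \sum_{i=1}^{K(y)} K^{(i)}(z),$$
where, given $K(y)$, the $K^{(i)}(z)$ are i.i.d.\ with the law of $K(z)$. Moreover $K(y) \ge 1$ almost surely (one line of descent is a Brownian motion with negative drift, which reaches every lower level), and $K(y) < \infty$ almost surely (the rightmost particle of BBM with drift $-\sqrt2$ tends to $-\infty$, so only finitely many particles ever reach $-y$), so this Galton--Watson process survives forever.

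Next I would compute the mean. The additive martingale $M(t) = \sum_{i=1}^{N(t)} e^{\sqrt2 X_i(t)}$ is a nonnegative martingale (It\^o's formula together with branching at rate $1$), with $M(0) = 1$; stopping each particle's contribution at the almost surely finite time it first hits $-y$ (with the usual truncation in time and in the number of particles to justify optional stopping) yields $\E[K(y)] = e^{\sqrt2 y}$. Hence $(K(y))$ is supercritical with Malthusian parameter $\sqrt2$, and, combining this with the branching property, $y \mapsto e^{-\sqrt2 y} K(y)$ is a nonnegative martingale in the spatial filtration; it therefore converges almost surely to a limit $W^{\ast} \ge 0$.

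The crux is the correct normalization. By the Kesten--Stigum theory for continuous-time Galton--Watson processes, $e^{-\sqrt2 y} K(y)$ has a strictly positive limit iff the offspring law satisfies an $x\log x$ condition; otherwise $W^{\ast} = 0$ and a Seneta--Heyde correction is required. So I would analyze the tail of the offspring law. Writing $u(x) = \E_x[s^{K}]$, where $K$ counts the particles of BBM started at $x$ that are absorbed at $0$, a first-step (branching) decomposition shows that $u$ solves the semilinear ODE
$$\tfrac12 u'' - \sqrt2\, u' + u^2 - u = 0 \quad \text{on } (0,\infty), \qquad u(0) = s, \quad u(x) \to 0 \text{ as } x \to \infty$$
(equivalently one studies $\E_x[e^{-\lambda K}]$ as $\lambda \downarrow 0$, or the multiplicative martingales and FKPP traveling waves that Neveu uses). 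This analysis shows that $K$ has a heavy tail, sitting exactly at the borderline where the offspring mean is finite but $\E[\nu \log^+ \nu] = \infty$ for the offspring count $\nu$. Two consequences follow: $e^{-\sqrt2 y} K(y) \to 0$ almost surely, and the Seneta--Heyde norming is $y e^{-\sqrt2 y}$, so that $y e^{-\sqrt2 y} K(y) \to W$ almost surely; the limit $W$ is almost surely positive (the Galton--Watson process never dies out) and almost surely finite, while the borderline failure of the $x\log x$ condition is precisely what forces $\E[W] = \infty$.

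I expect the tail estimate and the attendant Seneta--Heyde step to be the main obstacle: this is where essentially all of Neveu's argument lies (through the link with FKPP traveling waves and with the derivative martingale of BBM), whereas the spatial branching property and the mean computation are comparatively routine. A secondary technical point is the care needed in defining the offspring law, since the Galton--Watson process is in effect started from the absorbing boundary, and in justifying the optional stopping behind $\E[K(y)] = e^{\sqrt2 y}$.
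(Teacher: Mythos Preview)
The paper does not prove this lemma at all: it is quoted verbatim as a result of Neveu \cite{nev87}, with no argument given. Your sketch is therefore not to be compared against anything in the paper, but it is a faithful outline of Neveu's own proof: the spatial branching property makes $(K(y))_{y\ge0}$ a continuous-time Galton--Watson process, the additive martingale gives $\E[K(y)]=e^{\sqrt2 y}$, and the crux is that the offspring law sits exactly on the Kesten--Stigum boundary (finite mean, infinite $x\log x$ moment), forcing the Seneta--Heyde correction $y\,e^{-\sqrt2 y}$ and $\E[W]=\infty$. You are right that this last step carries essentially all the work and that Neveu handles it through the link with FKPP travelling waves and the derivative martingale.

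Two small points on your sketch. Your justification of $K(y)<\infty$ a.s.\ via ``the rightmost particle tends to $-\infty$'' is not quite the argument you want (it shows every particle is eventually killed, not that only finitely many are); the clean route is simply $\E[K(y)]=e^{\sqrt2 y}<\infty$. And identifying the Seneta--Heyde norming as exactly linear in $y$ (rather than merely slowly varying) does require the precise tail asymptotics $\P(K>k)\sim c/(k(\log k)^2)$ coming out of the ODE analysis, so that step is not just ``a'' Seneta--Heyde correction but a specific computation; you flag this correctly as the main obstacle.
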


To prove Proposition \ref{survivefromL}, we will use the following result about the survival probability of a Galton-Watson process, which is Lemma 13 of \cite{bbm2}.

\begin{Lemma}\label{GW}
Let $(p_k)_{k=0}^{\infty}$ be a sequence of nonnegative numbers that sum to 1, and let $X$ be a random variable such that $\P(X = k) = p_k$ for all nonnegative integers $k$.  Let $q$ be the extinction probability of a Galton-Watson process with offspring distribution $(p_k)_{k=1}^{\infty}$ started with a single individual.  Then $$1 - q \geq \frac{2(\E[X] - 1)}{\E[X(X-1)]}.$$
\end{Lemma}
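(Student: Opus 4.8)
This is the classical lower bound on the survival probability of a supercritical Galton--Watson process, and I would prove it by a second-order Taylor expansion of the offspring generating function at $s=1$. Write $f(s)=\E[s^X]=\sum_{k\ge0}p_k s^k$ for the offspring probability generating function: it is nondecreasing and convex on $[0,1]$, real-analytic (hence $C^\infty$) on $[0,1)$, continuous on $[0,1]$ with $f(1)=1$, and by the standard theory of branching processes the extinction probability $q$ of the process started from one individual is the smallest fixed point of $f$ in $[0,1]$ (so $f(q)=q$), which satisfies $q<1$ precisely when $\E[X]=f'(1)>1$. It suffices to treat the case $1<\E[X]<\infty$: if $\E[X]\le1$ the right-hand side of the claimed inequality is $\le0\le1-q$, and if $\E[X]=\infty$ then $\E[X(X-1)]=\infty$ as well (since $X(X-1)\ge X$ on $\{X\ge2\}$), so the statement is vacuous. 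The case $\E[X(X-1)]=\infty$ with $\E[X]<\infty$ will be taken care of automatically below.

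Assume then $1<\E[X]<\infty$, so $q<1$. Monotone convergence applied to $f'(s)=\sum_{k\ge1}kp_ks^{k-1}$ shows that $f'$ extends continuously to $[0,1]$ with $f'(1)=\E[X]$, and that $f''(s)=\sum_{k\ge2}k(k-1)p_ks^{k-2}$ is nonnegative and nondecreasing on $(0,1)$ with $\lim_{s\uparrow1}f''(s)=\E[X(X-1)]\in(0,\infty]$ (the limit is positive because $\E[X]>1$ forces $p_k>0$ for some $k\ge2$). Since $q\in[0,1)$, apply Taylor's theorem with Lagrange remainder to $f$ on the interval $[q,1]$: there is some $\theta\in(q,1)$ with
\[ f(q)=f(1)+f'(1)(q-1)+\tfrac12 f''(\theta)(q-1)^2. \]
Using $f(q)=q$, $f(1)=1$ and $f'(1)=\E[X]$, this rearranges to $(\E[X]-1)(1-q)=\tfrac12 f''(\theta)(1-q)^2$; dividing by $1-q>0$ gives the exact identity $1-q=2(\E[X]-1)/f''(\theta)$.

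Finally, since $\theta<1$ and $f''$ is nondecreasing on $(0,1)$ with $f''(\theta)>0$, we have $f''(\theta)\le\lim_{s\uparrow1}f''(s)=\E[X(X-1)]$, whence $1-q\ge2(\E[X]-1)/\E[X(X-1)]$; if $\E[X(X-1)]=\infty$ this reads $1-q\ge0$, which is the promised automatic case. I do not expect a real obstacle here: the only points needing care are recalling the classical identification of $q$ as the least root of $f(s)=s$ (and that it is $<1$ in the supercritical case), and the minor endpoint bookkeeping that identifies $f'(1)$ and $\lim_{s\uparrow1}f''(s)$ with the first and second factorial moments, which requires nothing beyond monotone convergence because the Lagrange point $\theta$ sits strictly inside $(0,1)$ where $f$ is smooth.
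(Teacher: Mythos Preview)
Your argument is correct. The paper does not actually prove this lemma; it merely cites it as Lemma~13 of \cite{bbm2}. Your Taylor-expansion proof of the generating function $f(s)=\E[s^X]$ at $s=1$ is the standard route to this inequality and is carried out carefully: you correctly isolate the nontrivial supercritical case $1<\E[X]<\infty$, use that $f'$ extends continuously to $[0,1]$ so the Lagrange form of the remainder applies with $\theta\in(q,1)\subset(0,1)$, and then exploit the monotonicity of $f''$ on $(0,1)$ to replace $f''(\theta)$ by $\E[X(X-1)]$. The edge cases ($\E[X]\le1$, $\E[X(X-1)]=\infty$) are handled appropriately.
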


\begin{figure}\centering
\includegraphics[scale=1]{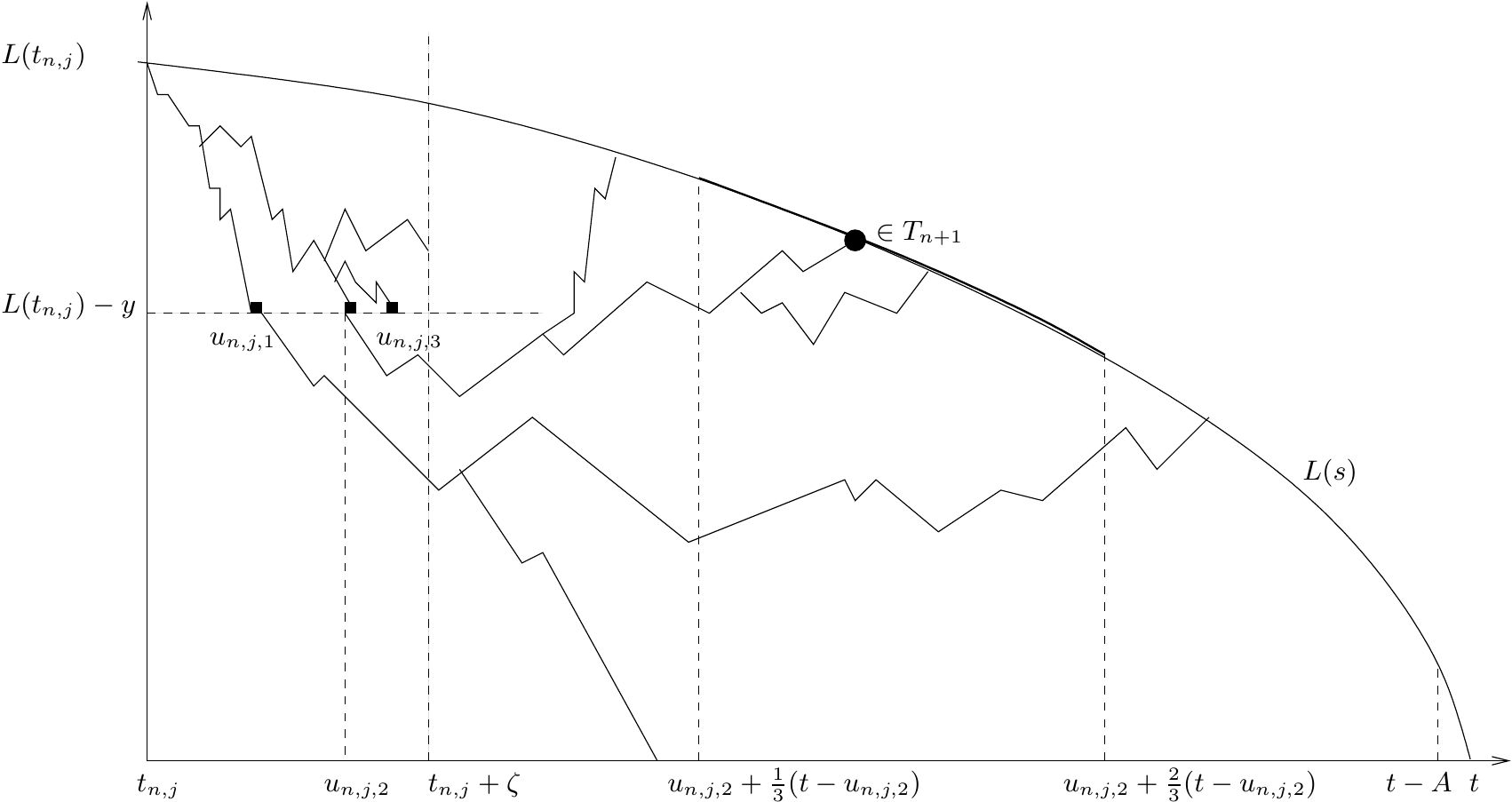}
\caption{ Construction of the branching process $T_n, n\ge 1$. Here we look at particle $j$ in generation $n$ alive at time $t_{n,j}$. It has three descendants that hit level $L(t_{n,j})-y$ figured by three squares. The second particle has a descendant that hits $L$ between time $u_{n,j,2}+(t-u_{n,j,2})/3$ and $u_{n,j,2}+2(t-u_{n,j,2})/3)$. The first of these descendants, indicated by a black dot, belongs to $T_{n+1}$. }
\end{figure}

\begin{Prop}\label{survivefromL}
Fix $t > 0$, and suppose that initially there is a single particle at $x = ct^{1/3}$.  Then there are constants $A > 0$ and $C > 0$ such that if $t \geq A$, the probability that there is at least one particle remaining at time $t$ is at least $C$.
\end{Prop}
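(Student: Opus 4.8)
The plan is a multi-scale renewal construction that produces a supercritical branching process, along the lines illustrated in the figure above. Fix a large constant $y_0$, and afterwards large constants $A$, $T_0$, $A_2$, $m$ (in that order). Since the initial particle sits at $ct^{1/3}=L(0)$, it lies on the moving barrier $L(\cdot)$; call a descendant lying on $L(\cdot)$ at some time $s$ a \emph{barrier particle}, and organize barrier particles into generations $T_1=\{\text{initial particle}\},T_2,T_3,\dots$. Given a barrier particle of generation $n$ at space‑time point $(s,L(s))$ with $t-s\ge A_2$, use Neveu's result (Lemma \ref{nevlem}): look at its descendants that reach level $L(s)-y_0$ within time $T_0$. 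Because $L(s)-y_0>0$, none of these descendants visits $0$ before reaching $L(s)-y_0$, so (shifting by $L(s)$) this subpopulation is exactly the one described by Lemma \ref{nevlem}, and its cardinality $K$ has the fixed law of the time‑$T_0$‑truncated Neveu count. Since $y_0e^{-\sqrt2 y_0}K(y_0)\to W$ a.s.\ with $\E[W]=\infty$, Fatou's lemma forces $\E[y_0e^{-\sqrt2 y_0}K]$ to be arbitrarily large once $y_0$ and then $T_0$ are taken large enough. Each of these $K$ descendants, at height $L(s)-y_0<L(u)-1$ at its hitting time $u\le s+T_0$, then independently has — by the time‑shifted form of Corollary \ref{hitcor} with $\alpha=\tfrac13,\ \beta=\tfrac23$ — a descendant returning to $L(\cdot)$ during $[u+(t-u)/3,\ u+2(t-u)/3]$ with probability
$$\asymp e^{-\sqrt2 y_0}\sin\!\bigg(\frac{\pi y_0}{L(u)}\bigg)(t-u)^{1/3};$$
using $\sqrt2 L(u)=(3\pi^2(t-u))^{1/3}$ and the monotonicity of $\theta\mapsto\sin\theta/\theta$, this is bounded below by a constant $c_0>0$, depending only on $y_0$, uniformly over $u$ with $t-u\ge A$ (the regime $t-u\asymp A$, where $\sin(\pi y_0/L(u))$ is still bounded away from $0$, must be checked separately and is what forces $A$ to be large relative to $y_0$); moreover $c_0\ge c_1y_0e^{-\sqrt2 y_0}$ for an absolute constant $c_1>0$. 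The first such returning particle, when it exists, becomes the corresponding member of $T_{n+1}$, again a barrier particle, now at a time $s'$ with $t-s'\in[\tfrac13(t-s-T_0),\ \tfrac23(t-s)]$.

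By the strong branching property at the stopping line ``reach $L(s)-y_0$'', the subtrees hanging from the $K$ hitting points are conditionally i.i.d., so the number of children of a barrier particle stochastically dominates $\mathrm{Binom}(K,c_0)$, with $K$ of fixed law, uniformly over the admissible $(s,L(s))$. Hence $|T_n|$ stochastically dominates a genuine Galton–Watson process with offspring law $\mathcal L=\mathrm{law}(\mathrm{Binom}(K,c_0))$, whose mean is $c_0\E[K]\ge c_1\,\E[y_0e^{-\sqrt2 y_0}K]$, which we have arranged to exceed any prescribed level. Since $\mathcal L$ may fail to have a finite second moment, replace each offspring count by its minimum with a large constant $m$: the resulting bounded (hence finite‑variance) offspring law $\mathcal L^{(m)}$ still has mean $>1$ once $y_0$ and then $m$ are large enough, so Lemma \ref{GW} shows the Galton–Watson process with offspring $\mathcal L^{(m)}$ survives forever with probability $p^\ast>0$, and $|T_n|$ dominates it. Because $t-s$ is multiplied by at most $\tfrac23$ at each generation, after $n^\ast=O(\log t)$ generations every nonempty line of descent reaches a barrier particle with $t-s$ of constant order; call such a particle a \emph{leaf}. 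Coupling shows that if the dominated process survives to generation $n^\ast$ then a leaf must appear (otherwise $T_{n^\ast}\ne\emptyset$ would contain a non‑leaf, contradicting $t-s\le(2/3)^{n^\ast-1}t$ being small), so $\P(\exists\text{ leaf})\ge p^\ast$. A leaf is a particle at height $L(s)=c(t-s)^{1/3}$ of constant order with constantly much time left; a crude estimate (it does not branch and its path stays positive over the bounded remaining interval, a fixed positive‑probability event) gives it a descendant alive at time $t$ with probability at least an absolute constant $c_{\mathrm{leaf}}>0$. Using the branching property to make the fates of distinct leaves independent, $\P(\text{some particle alive at }t)\ge c_{\mathrm{leaf}}\,p^\ast=:C>0$, as desired.

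The main obstacle is the handling of Neveu's limit variable $W$. Because $\E[W]=\infty$ — equivalently, the offspring law of the renewal construction has a heavy tail and the naive supercriticality estimate $1-q\ge 2(\E[X]-1)/\E[X(X-1)]$ would have a useless infinite denominator — one must instead exploit $\E[W]=\infty$ through Fatou to push the offspring mean above any prescribed level by taking $y_0$ large, and only then truncate the offspring count at a large constant to recover the finite‑variance hypothesis of Lemma \ref{GW}. A secondary technical point, interlaced with this, is keeping all the estimates — in particular the lower bound $c_0$ for the barrier‑return probability, and the control $\E[K]\cdot c_0\gtrsim\E[y_0e^{-\sqrt2 y_0}K]$ — uniform over the whole range of times $t-s$ that occur during the construction, including the last few generations where $t-s$ has dropped to constant order.
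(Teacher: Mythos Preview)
Your proposal is correct and follows essentially the same approach as the paper: a multi-scale renewal of barrier particles, using Neveu's lemma together with Fatou (to exploit $\E[W]=\infty$) for the descent to level $L(s)-y_0$, Corollary \ref{hitcor} with $\alpha=\tfrac13,\beta=\tfrac23$ for the return to $L(\cdot)$, and truncation of the offspring count to recover a finite second moment so that Lemma \ref{GW} applies. The only notable difference is how the time-inhomogeneity of the offspring law is handled: the paper freezes particles once $t-s<A$ (giving them exactly one offspring, so $q_{t,*}=0$ there) and proves by an explicit induction that the extinction probability satisfies $q_{t,n}\le\sup_t q_{t,*}$, whereas you dominate by a homogeneous Galton--Watson process and argue that its survival forces a leaf to appear within $O(\log t)$ generations because $t-s$ shrinks by a factor $\le\tfrac23$ at each step.
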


\begin{proof}
We prove this result by constructing a branching process that resembles a discrete-time Galton-Watson process but allows individuals to have different offspring distributions.  We will show that the probability that this branching process survives is bounded below by a positive constant, and that survival of this branching process implies that the branching Brownian motion survives until at least time $t - A$.  This will in turn give the branching Brownian motion a positive probability of surviving until time $t$, which will imply the result.

Let $C' = C'_{1/3, 2/3}$, where $C'_{1/3, 2/3}$ is the constant from Corollary \ref{hitcor} with $\alpha = 1/3$ and $\beta = 2/3$.  Consider the setting of Lemma \ref{nevlem}, in which we have branching Brownian motion with drift $-\sqrt{2}$ and no absorption.  For $y > 0$, let $K(y)$ denote the number of particles that reach $-y$, if particles are killed upon reaching $-y$.  For $\zeta > 0$, let $K_{\zeta}(y)$ be the number of these particles that reach $y$ before time $\zeta$.  Because the random variable $W$ in Lemma \ref{nevlem} has infinite expected value, it follows from Lemma \ref{nevlem} and Fatou's Lemma that we can choose $y > 0$ sufficiently large that $$\E[y e^{-\sqrt{2} y} K(y)] \geq \frac{3 \cdot 2^{1/3} c}{C'}.$$  We can then choose a real number $\zeta > 0$ and a positive integer $M$ sufficiently large that
\begin{equation}\label{kzeta}
\E[y e^{-\sqrt{2} y} (K_{\zeta}(y) \wedge M)] \geq \frac{2 \cdot 2^{1/3} c}{C'}.
\end{equation}
Let $A_0$ be defined as in Corollary \ref{hitcor}.  Choose $A$ to be large enough that the following hold:
\begin{align}
&A \geq \max\{A_0 + \zeta, 2 \zeta\} \label{51} \\
&cA^{1/3} \geq 2y \label{52} \\
&cA^{1/3} - c(A - \zeta)^{1/3} \leq \frac{y}{2}. \label{53}
\end{align}
Let $t \geq A$, and let $L(s) = c(t-s)^{1/3}$ for $0 \leq s \leq t$.

We now construct the branching process inductively.  Let $T_0 = \{0\}$.  Suppose that $T_n = \{t_{n,1}, t_{n,2}, \dots, t_{n,m_n}\}$, which will imply that at the $n$th stage of the process, there are particles at positions $L(t_{n,1}), \dots, L(t_{n, m_n})$ at times $t_{n,1}, \dots, t_{n,m_n}$.  For $j = 1, 2, \dots, m_n$, if $t_{n,j} \geq t - A$, then we put $t_{n,j}$ in the set $T_{n+1}$.  If $t_{n,j} < t - A$, then we follow the trajectories after time $t_{n,j}$ of the descendants of the particle that reached $L(t_{n,j})$ at time $t_{n,j}$ until either time $t_{n,j} + \zeta$, or until the descendant particles reach $L(t_{n,j}) - y$, which is positive by (\ref{52}).  Denote the times, before time $t_{n,j} + \zeta$, at which descendant particles reach $L(t_{n,j}) - y$ by $u_{n,j,1} < \dots < u_{n,j,\ell_{n,j}}$.  For $\ell = 1, \dots, \ell_{n,j} \wedge M$, if at least one descendant of the particle that reaches $L(t_{n,j}) - y$ at time $u_{n,j,\ell}$ later reaches $L(s)$ at some time $s \in [u_{n,j,\ell} + (t - u_{n,j,\ell})/3, u_{n,j,\ell} + 2(t - u_{n,j,\ell})/3]$, then we put the smallest time $s$ at which this occurs in the set $T_{n+1}$.  For $n \geq 0$, let $Z_n$ be the cardinality of $T_n$.

The next step is to obtain bounds on the moments of $Z_1$ which are valid for all $t \geq A$.  Write $u_i = u_{0,1,i}$.  Then particles reach $L(0) - y$ at times $u_1, \dots, u_{\ell_{0,1}}$.  Observe that
\begin{equation}\label{Z1sum}
Z_1 = \xi_1 + \dots + \xi_{\ell_{0,1} \wedge M},
\end{equation}
where $\xi_i = 1$ if the particle that reaches $L(0) - y$ at time $u_i$ has a descendant that reaches $L(s)$ at some time $s \in [u_i + (t - u_i)/3, u_i + 2(t - u_i)/3]$ and $\xi_i = 0$ otherwise.  Let ${\cal G}$ be the $\sigma$-field generated by $u_1, \dots, u_{\ell_{0,1}}$.  By Corollary \ref{hitcor}, if $t \geq A$, then
\begin{align}\label{Xiui}
&C'e^{\sqrt{2}(x - y)} \sin \bigg( \frac{\pi(x-y)}{L(u_i)} \bigg) (t - u_i)^{1/3} \exp(-(3 \pi^2(t-u_i))^{1/3}) \nonumber \\
&\qquad \leq \P(\xi_i = 1|{\cal G}) \leq C e^{\sqrt{2}(x - y)} \sin \bigg( \frac{\pi(x-y)}{L(u_i)} \bigg) (t - u_i)^{1/3} \exp(-(3 \pi^2(t-u_i))^{1/3}).
\end{align}
Because $A \geq A_0 + \zeta$ by (\ref{51}), there is a constant $C$ such that if $t \geq A$ then
\begin{equation}\label{s1}
1 = e^{\sqrt{2} x} \exp(-(3 \pi^2 t)^{1/3}) \leq e^{\sqrt{2} x} \exp(-(3 \pi^2(t - u_i))^{1/3}) \leq e^{\sqrt{2} x} \exp(-(3 \pi^2(t - \zeta))^{1/3}) \leq C.
\end{equation}
Because $A \geq 2 \zeta$ by (\ref{51}), if $t \geq A$ then
\begin{equation}\label{s2}
(t/2)^{1/3} \leq (t - \zeta)^{1/3} \leq (t - u_i)^{1/3} \leq t^{1/3}.
\end{equation}
Therefore, using again that $A \geq 2 \zeta$, we get, when $t \geq A$,
\begin{equation}\label{s3}
\sin \bigg( \frac{\pi(x-y)}{L(u_i)} \bigg) = \sin \bigg( \frac{\pi(L(u_i) - x + y)}{L(u_i)} \bigg) \leq \frac{\pi(L(u_i) - x + y)}{L(u_i)} \leq \frac{\pi y}{L(u_i)} \leq \frac{\pi y}{c(t - \zeta)^{1/3}} \leq \frac{2^{1/3} \pi y}{c t^{1/3}}.
\end{equation}
By (\ref{53}), $$x - L(u_i) \leq L(0) - L(\zeta) = ct^{1/3} - c(t - \zeta)^{1/3} \leq y/2$$ for $t \geq A$.  Using this result and the fact that $\sin(x) \geq 2x/\pi$ for $0 \leq x \leq \pi/2$, we get
\begin{equation}\label{s4}
\sin \bigg( \frac{\pi(x-y)}{L(u_i)} \bigg) = \sin \bigg( \frac{\pi(L(u_i) - x + y)}{L(u_i)} \bigg) \geq \frac{2(L(u_i) - x + y)}{L(u_i)} \geq \frac{y}{L(u_i)} \geq \frac{y}{ct^{1/3}}.
\end{equation}
Combining (\ref{Xiui}), (\ref{s1}), (\ref{s2}), (\ref{s3}), and (\ref{s4}), we get
\begin{equation}\label{Pxi}
\frac{C'}{2^{1/3} c} ye^{-\sqrt{2} y} \leq \P(\xi_i = 1|{\cal G}) \leq C y e^{-\sqrt{2} y}.
\end{equation}
Because $\ell_{0,1}$ has the same distribution as $K_{\zeta}(y)$, it follows from (\ref{kzeta}), (\ref{Z1sum}), and (\ref{Pxi}) that
\begin{equation}\label{EZ1}
\E[Z_1] \geq \frac{C'}{2^{1/3} c} y e^{-\sqrt{2} y} \E[K_{\zeta}(y) \wedge M] \geq 2.
\end{equation}
From (\ref{Z1sum}), we see that $Z_1 \leq M$ so
\begin{equation}\label{EZ2}
\E[Z_1^2] \leq M^2 \leq C.
\end{equation}

For $n \geq 0$, let $q_{t,n} = \P(T_n = \emptyset)$.  Let $q_t = \lim_{n \rightarrow \infty} q_{t,n} = \P(T_n = \emptyset \mbox{ for some }n)$.  Let $p_t(k) = \P(Z_1 = k)$.  For $z \in [0,1]$, let $$\varphi_t(z) = \sum_{k=0}^{\infty} p_t(k)z^k.$$  Let $q_{t,*} = \min\{q \in [0,1]: \varphi_t(q) = q\}$, which is the probability that a Galton-Watson branching process goes extinct if each individual independently has $k$ offspring with probability $p_t(k)$.

Let $$q_* = \sup_{t > 0} q_{t,*}.$$  We claim that for all $t > 0$ and all $n \geq 0$, we have $q_{t,n} \leq q_*$.  We prove this claim by induction on $n$.  Because $q_{t,0} = 0$ for all $t > 0$, the claim is clear when $n = 0$.  Suppose the claim holds for some $n$.  Then by the induction hypothesis, $$\P(T_{n+1} = \emptyset|T_1 = \{s_1, \dots, s_k\}) = \prod_{j=1}^k  q_{t-s_j, n} \leq q_*^k.$$  Taking expectations of both sides gives $$q_{t,n+1} \leq \sum_{k=0}^{\infty} p_t(k) q_*^k = \varphi_t(q_*).$$  Because $\varphi_t(q_{t,*}) = q_{t,*}$ and $\varphi_t(1) = 1$, that fact that $z \mapsto \varphi_t(z)$ is nondecreasing and convex implies that if $z \geq q_{t,*}$, then $\varphi_t(z) \leq z$.  Therefore, since $q_* \geq q_{t,*}$, we have $\varphi_t(q_*) \leq q_*$.  Thus, $q_{t,n+1} \leq q_*$, and the claim follows by induction.

The claim implies that $q_t \leq q_*$ for all $t > 0$.  If $0 < t \leq A$, then $p_t(1) = 1$ and thus $q_{t,*} = 0$.  If $t \geq A$, then by Lemma \ref{GW} and equations (\ref{EZ1}) and (\ref{EZ2}),
$$1 - q_{t,*} \geq \frac{2(\E[Z_1] - 1)}{\E[Z_1(Z_1 - 1)]} \geq \frac{2(\E[Z_1] - 1)}{\E[Z_1^2]} \geq C.$$  It follows that $1 - q_* \geq C$, and therefore $1 - q_t \geq C$ for all $t \geq A$.

Thus, there is a constant $C$ such that, for all $t \geq A$, the probability that $T_n \neq \emptyset$ for all $n$ is at least $C$.  However, if $T_n \neq \emptyset$ for all $n$, then eventually some particle must reach $L(s)$ for some $s \in [t-A, t-A/3]$.  The probability that a particle reaching $L(s)$ for some $s \in [t-A, t-A/3]$ survives until time $t$ is bounded below by a constant.  The result follows.
\end{proof}

\begin{proof}[Proof of Theorem \ref{extinct}]
We first obtain an upper bound for the extinction time.  Let $\beta > 0$, and let $t_+ = t + \beta x^2$
where $t =\tau x^3$.  For $0 \leq s \leq t_+$, let $L_+(s) = c(t_+ - s)^{1/3}$.  Consider the process in which particles are killed at time $s$ if they reach $L_+(s)$.  The probability that the original process survives until time $t_+$ is bounded above by the probability that a particle is killed at $L_+(s)$ for some $s \in [0,t_+]$.
Note that $L_+(0) - x = c(t_+^{1/3} - t^{1/3}) \asymp \beta x^2 t^{-2/3} \asymp \beta$. Therefore, as soon as $x$ is large enough so that $t\ge A_0$ we can apply Corollary \ref{hitcor} to bound the probability that the original process survives until time $t_+$ by
\begin{equation}\label{starbd}
C e^{\sqrt{2} x} \sin \bigg( \frac{\pi x}{L_+(0)} \bigg) t_+^{1/3} e^{-(3 \pi^2 t_+)^{1/3}}.
\end{equation}
Observe that furthermore $$\sin \bigg(\frac{\pi x}{L_+(0)} \bigg) \leq \frac{
\pi (L_+(0) - x)}{L_+(0)} \leq C \beta t_+^{-1/3}.$$ and $$\exp \big( \sqrt{2} x - (3 \pi^2 t_+)^{1/3} \big) = \exp \big( -(3 \pi^2)^{1/3}(t_+^{1/3} - t^{1/3}) \big) \leq e^{-C' \beta},$$
for some positive constant $C'$.
Therefore, the probability in (\ref{starbd}) is at most $C \beta e^{-C' \beta}$, which is less than $\varepsilon/2$ for sufficiently large $\beta$.  For such $\beta$, we have
\begin{equation}\label{zetaup}
\P(\zeta < t_+) \geq 1 - \frac{\varepsilon}{2}
\end{equation}
for sufficiently large $x$.

To obtain the lower bound on the extinction time, let $t_- = t - \beta x^2$.
For $0 \leq s \leq t_-$, let $L_-(s) = c(t_- - s)^{1/3}$.  For $y > 0$ and $\zeta > 0$, let $K_{\zeta}(y)$ denote the number of particles that would be killed, if particles were killed upon reaching $x-y$ before time $\zeta$.  By Lemma \ref{nevlem}, we can choose $y$ and $\zeta$ sufficiently large and $\gamma > 0$ sufficiently small that $y \geq 2c^3 \beta + 1$ and
\begin{equation}\label{Kbig}
\P(K_{\zeta}(y) > \gamma y^{-1} e^{\sqrt{2} y}) > 1 - \frac{\varepsilon}{4}.
\end{equation}
Observe that for sufficiently large $x$,
\begin{equation}\label{ch1}
t_- - \zeta = t - \beta x^2 - \zeta \geq \frac{t}{2},
\end{equation}
which means for all $u \in (0, \zeta)$, $$x - L_-(u) = c[t^{1/3} - (t - \beta x^2 - u)^{1/3}] \leq \frac{c}{3} \bigg( \frac{t}{2} \bigg)^{-2/3} (\beta x^2 + \zeta) \leq c^3 \beta$$ for sufficiently large $x$.  Because $y \geq c^3 \beta + 1$, it follows that
\begin{equation}\label{hyp17}
x - y \leq L_-(u) - 1
\end{equation}
for all $u \in (0, \zeta)$, if $x$ is sufficiently large.

Now suppose a particle reaches $x - y$ at time $u \in (0, \zeta)$.  In view of (\ref{hyp17}), we can apply Corollary \ref{hitcor} to see that the probability that a descendant of this particle reaches $L(s)$ for some $s \in [u, u + (t_- - u)/2]$ is at least
\begin{equation}\label{survprob}
C e^{\sqrt{2}(x - y)} \sin \bigg( \frac{\pi(x-y)}{L_-(u)} \bigg)(t_- - u)^{1/3} \exp(-(3 \pi^2(t_- - u))^{1/3}).
\end{equation}
Using that $y \geq 2c^3\beta$ and that $\sin(x) \geq 2x/\pi$ for $0 \leq x \leq \pi/2$,
\begin{equation}\label{ch2}
\sin \bigg( \frac{\pi(x-y)}{L_-(u)} \bigg) = \sin \bigg( \frac{\pi(L_-(u) - x + y)}{L_-(u)} \bigg) \geq \frac{2(L_-(u) - x + y)}{L_-(u)} \geq \frac{2(y - c^3 \beta)}{c t^{1/3}} \geq \frac{y}{ct^{1/3}}.
\end{equation}
Also, for sufficiently large $x$, we have $t^{1/3} - (t - \beta x^2 - u)^{1/3} \geq (1/3)t^{-2/3} \cdot \beta x^2 = (c^2/3) \beta$, and so
\begin{align}\label{ch3}
\exp(-(3 \pi^2(t_- - u))^{1/3}) &= \exp(-(3 \pi^2 t)^{1/3}) \exp((3 \pi^2)^{1/3}[t^{1/3} - (t - \beta x^2 - u)^{1/3}]) \nonumber \\
&\geq \exp(-(3 \pi^2 t)^{1/3}) \exp((3 \pi^2)^{1/3} c^2 \beta/3).
\end{align}
Recall also that
\begin{equation}\label{ch4}
e^{\sqrt{2} x} e^{-(3 \pi^2 t)^{1/3}} = 1.
\end{equation}
By (\ref{ch1}), (\ref{ch2}), (\ref{ch3}), and (\ref{ch4}), for sufficiently large $x$, the probability in (\ref{survprob}) is at least
\begin{equation}\label{survprob2}
C y e^{-\sqrt{2} y} e^{(3 \pi^2)^{1/3} c^2 \beta/3},
\end{equation}
where the constant $C$ does not depend on $\beta$.  By Proposition \ref{survivefromL}, the probability that a descendant of this particle survives until time $t_-$ is also bounded below by (\ref{survprob2}), with a different positive constant $C$.
Therefore, conditional on the event that $K_{\zeta}(y) > \gamma y^{-1} e^{\sqrt{2} y}$, the probability that some particle survives until $t_-$ is at least
$$1 - (1 - Cye^{-\sqrt{2} y} e^{(3 \pi^2)^{1/3}c^2 \beta/3})^{\gamma y^{-1} e^{\sqrt{2} y}}.$$ Using the inequality $1-a \leq e^{-a}$ for $a \in \R$, we see that this expression is bounded below by $$1 - \exp \big( -C \gamma e^{(3 \pi^2)^{1/3}c^2 \beta/3} \big)$$ and therefore is at least $1 - \varepsilon/4$ if $\beta$ is chosen to be large enough.  Combining this result with (\ref{Kbig}) gives that for such $\beta$,
\begin{equation}\label{zetalow}
\P(\zeta > t_-) \geq 1 - \frac{\varepsilon}{2}
\end{equation}
for sufficiently large $x$.  The result follows from (\ref{zetaup}) and (\ref{zetalow}).
\end{proof}

\begin{proof}[Proof of Theorem \ref{yaglom}]
First, suppose that $t \geq \max\{A_0, 2A\}$, where $A_0$ is the constant from Corollary \ref{hitcor} and $A$ is the constant from Proposition \ref{survivefromL}.  Suppose also that $0 < x < ct^{1/3} - 1$.  For $0 \leq s \leq t$, let $L(s) = c(t-s)^{1/3}$.  Consider a modification of the branching Brownian motion in which particles, in addition to getting killed at the origin, are killed if they reach $L(s)$ for some $s \in [0,t]$.  Let $R_1$ be the number of particles that are killed at $L(s)$ for some $s \in (0,t)$, and let $R_2$ be the number of particles that are killed at $L(s)$ for some $s \in (0, t/2)$.  By Corollary \ref{hitcor},
\begin{equation}\label{yagup}
\P(R_1 > 0) \leq C e^{\sqrt{2} x} \sin \bigg( \frac{\pi x}{L(0)} \bigg)t^{1/3} e^{-(3 \pi^2 t)^{1/3}}.
\end{equation}
In this modified process, all particles disappear before time $t$.  Therefore, the only way to have $\zeta > t$ is to have, in the modified process, a particle killed at $L(s)$ for some $s \in (0, t)$.   The upper bound in (\ref{E:ext proba 1}) thus follows from the upper bound in (\ref{yagup}).

Likewise, Corollary \ref{hitcor} implies that
$$\P(R_2 > 0) \geq C e^{\sqrt{2} x} \sin \bigg( \frac{\pi x}{L(0)} \bigg)t^{1/3} e^{-(3 \pi^2 t)^{1/3}}.$$  By Proposition \ref{survivefromL}, a particle that reaches $L(s)$ at time $s \in (0, t/2)$ has a descendant alive at time $t$ with probability greater than $C$.  This implies the lower bound in (\ref{E:ext proba 1}).

Next, suppose $0 < t < \max\{A_0, 2A\}$ and $0 < x < ct^{1/3} - 1$.  Let $(B(s), s \geq 0)$ be standard Brownian motion with $B(0) = x$.  The probability that the branching Brownian motion survives until time $t$ is bounded below by $P(B(s) > 0 \mbox{ for all }s \in [0,t])$ and is bounded above by $e^t P(B(s) > 0 \mbox{ for all }s \in [0,t])$.  Because both $x$ and $t$ are bounded above by a positive constant, both of these expressions are of the order $x$, as are the expressions on the left-hand side and the right-hand side of (\ref{E:ext proba 1}).  Consequently, (\ref{E:ext proba 1}) holds when $0 < t < \max\{A_0, 2A\}$.

Finally, (\ref{E:ext proba 2}) follows from (\ref{E:ext proba 1}) by fixing $x > 0$ and letting $t \rightarrow \infty$.
\end{proof}

\bigskip \noindent
Julien Berestycki: \\
Universit\'e Pierre et Marie Curie.  LPMA / UMR 7599, Bo\^ite courrier 188. 75252 Paris Cedex 05

\medskip \noindent Nathana\"el Berestycki: \\
DPMMS, University of Cambridge. Wilberforce Rd., Cambridge CB3 0WB 

\medskip \noindent
Jason Schweinsberg:\\
University of California at San Diego, Department of Mathematics. 9500 Gilman Drive; La Jolla, CA 92093-0112

\end{document}